\documentclass[12pt,leqno]{article}
\usepackage{amsmath}
\usepackage{subfigure}
\usepackage{amsthm}
\usepackage{amstext}
\usepackage{amsopn}
\usepackage{texdraw}
\usepackage{graphicx}
\usepackage{pdfpages}
\usepackage{multicol,graphicx,color}
\usepackage{pslatex}
\usepackage{amsthm}
\usepackage{amssymb}
\usepackage{latexsym}
\usepackage{lscape}
\usepackage{epsfig}
\usepackage{pstricks}
\usepackage{amsfonts}
\usepackage{enumerate}
\usepackage{exscale}
\usepackage{relsize}
\usepackage{multirow}
\usepackage{mathrsfs}
\usepackage{hyperref}
\usepackage{tikz}
\usetikzlibrary{shapes.geometric, arrows}
\usepackage{subcaption}

\newtheorem{theorem}{Theorem}[section]
\newtheorem{lemma}[theorem]{Lemma}

\newtheorem{corollary}[theorem]{Corollary}
\theoremstyle{definition}
\newtheorem{definition}[theorem]{Definition}

\theoremstyle{remark}
\newtheorem{remark}[theorem]{Remark}
\usepackage{tikz}
\usetikzlibrary{shapes.geometric, arrows}

\tikzstyle{startstop} = [rectangle, rounded corners, 
minimum width=5cm, 
minimum height=1cm,
text centered, 
text width=4.5cm, 
draw=black, 
fill=red!30]

\tikzstyle{io} = [trapezium, 
trapezium stretches=true, 
trapezium left angle=70, 
trapezium right angle=110, 
minimum width=5cm, 
minimum height=1cm, text centered, 
draw=black, fill=blue!30]

\tikzstyle{process} = [rectangle, 
minimum width=5cm, 
minimum height=1cm, 
text centered, 
text width=4.5cm, 
draw=black, 
fill=orange!30]

\tikzstyle{decision} = [diamond, 
minimum width=3cm, 
minimum height=1cm, 
text centered, 
text width=4.5cm, 
draw=black, 
fill=green!30]
\tikzstyle{arrow} = [thick,->,>=stealth]
\usepackage{multirow}

\begin{document}
\title{Degeneracy of Planar 4-body Kite Central Configurations: Restricted and Full Space Perspective}
\author{Shanzhong Sun\\
Department of Mathematics \\
and\\
Academy for Multidisciplinary Studies\\
Capital Normal University, Beijing 100048, P. R. China\\
sunsz@cnu.edu.cn\\
 Zhifu Xie\\
 School of Mathematics and Natural Science\\
 The University of Southern Mississippi\\
 Hattiesburg, MS 39406,USA\\
 Zhifu.Xie@usm.edu\\
 Peng You\\
  School of Statistics and Mathematics\\
  Hebei University of Economics and Business\\
  Shijiazhuang Hebei 050061, P. R. China\\
you-peng@163.com
  }
 \date{}
\maketitle

\begin{abstract} 
Based on our previously proposed direct approach to central configurations in the full configuration space, we study the degeneracy of planar four-body kite central configurations with arbitrary positive masses both in kite symmetry restricted subspace and in full configuration space.  We derive a factorization of the determinant of the Jacobian that substantially simplifies the subsequent analysis. The methods used to study the singularity of the Jacobian include rigorous interval arithmetic, the Krawczyk operator, a quantitative implicit function theorem, and computer-assisted proofs. Both convex and concave kite central configurations are considered. In the convex case, we prove the nondegeneracy in both the restricted subspace and the full configuration space. In the concave case, there is a unique smooth degeneracy curve in the restricted subspace; an additional degeneracy curve emerges in the full space setting, which is hidden in the subspace of kite configurations and makes it unavoidable to study the degeneracy in the full configuration space. We also establish several properties of these degeneracy curves. As an illustration to this complete characterization, we exam the well-known central configuration of equilateral triangle with equal masses at the vertices and a fourth mass at the center, which reveals additional features of degeneracy captured by our full-space framework.

\end{abstract}
{\bf   Key Words:} $N$-Body Problem, Kite Central Configuration, Symmetry, Jacobian Matrix,  Degeneracy, Interval Arithmetic, Krawczyk operator. \\
{\bf  Mathematics Subject Classification:} 70F10, 70F15

\tableofcontents

\section{Introduction and Related Works}

The Newtonian $N$-body problem concerns the motion of $N$ point masses interacting through mutual gravitational attraction. Despite its simple formulation, the problem exhibits a rich variety of geometric and dynamical phenomena and remains one of the central topics in celestial mechanics. Among its most important special solutions are the homographic motions generated by the \emph{central configurations (CCs)}, where the acceleration vector of each mass is proportional to its position vector relative to the center of mass.

Let $q=(q_1,\ldots,q_N)\in \mathbb{R}^{2N}$ denote a planar configuration of $N$ bodies with masses $m_1,\ldots,m_N$. A configuration is called a central configuration if there exists a positive constant $\lambda$ such that
\begin{equation} \label{eq:cc_def1}
     \sum_{j \neq i} \frac{m_i m_j (q_j - q_i)}{|q_i - q_j|^3}=  -\lambda m_{i}(q_i - c)\quad i=1,\dots,N,
\end{equation}
where $c=c(q)=\frac{\sum_{1\leq i\leq N}m_iq_i}{\sum_{1\leq i\leq N}m_i}$ is the center of mass of configuration $q$. 
A central configuration can also be characterized as a critical point of the Newtonian potential function constrained to a level set of the moment of inertia. Let the Newtonian potential be
\begin{equation}\label{NU}
U(q)=\sum_{1\leq i<j\leq N} \frac{m_im_j}{\|q_i-q_j\|},
\end{equation}
and the moment of inertia be
\begin{equation}\label{NI}
    I(q)= \sum_{i=1}^N m_i\|q_i-c\|^2.
\end{equation}
Then the central configuration equation \eqref{eq:cc_def1} can be rewritten as
\begin{equation} \label{eq:cc_def2}
\nabla U(q)=-\lambda \nabla I(q).
\end{equation}
By virtue of the homogeneity of $U$ and $I$, the Lagrange multiplier $\lambda=\lambda(q)=U(q)/I(q)$ {when the configuration is central}. Central configurations play a fundamental role in the $N$-body problem because they generate homographic motions, describe the asymptotic behavior of solutions near total collision, and provide the relative equilibria of the system (see e.g., \cite{Moeckel14}, 2015).

The planar four-body central configuration problem occupies a particularly important position between the completely understood three-body case and the substantially more complicated $N(>4)$-body problems. During the last several decades, significant progress has been achieved in the classification of planar four-body central configurations. Special attention has been devoted to configurations possessing some symmetry, such as rhombi, isosceles trapezoids, and kites, because symmetry reduces the dimension of the problem while still revealing many essential geometric features. Kite central configurations are our main concern in the current paper, and more references will be given in due course.

One aspect that has received comparatively less attention is the occurrence of degenerate central configurations. A central configuration is said to be degenerate when the Jacobian matrix associated with the central configuration equations loses rank, or equivalently when the Hessian of the potential restricted to the normalized configuration manifold possesses a nontrivial kernel beyond the trivial directions induced by translation, rotation, and scaling. Degeneracy is closely related to bifurcation phenomena, changes in the number of central configurations, and the structure of the solution set in mass-parameter space. In many problems, degenerate configurations mark the transition between distinct branches of solutions and therefore play a crucial role in understanding the global geometry of the central configuration variety.

\subsection{Backgrounds on Degeneracy of Central Configurations}

The notion of degeneracy of central configurations dates back to the work of Palmore (\cite{Palmore1975}, 1975) who proved that the equilateral triangle configuration with equal masses at its vertices and a special fourth mass at the center of the triangle is a degenerate central configuration of the planar $4$-body problem. This was to answer a question raised by Smale (\cite{Smale1974}, 1974) which asked whether there exists a degenerate central configuration in the planar $N$-body problem for any $N\geq 4$. Then Palmore ( \cite{Palmore1976}, 1976) extended his example to the case of regular $N$-polygon with one more mass at the center of planar $(N+1)$-body problem and studied the measure of degenerate relative equilibria, 
{showing that non-degeneracy is generic}.  However he didn't consider whether the degeneracy gives rise to bifurcations. 

Sim\'o (\cite{Simo1978}, 1978) gave a complete numerical study of the central configurations with arbitrary masses in the planar  $4$-body problem with degeneracy and bifurcations discussed in details.
Later, Arenstorf(\cite{Arenstorf1982}, 1982) analytically investigated the degeneracy of the restricted $4$-body problem.
Meyer(\cite{Meyer1987}, 1987) considered the continuation problem of degenerate central configurations from the restricted to the full $4$-body problem.

Meyer and Schmidt (\cite{Meyer1988}, 1988) proved that degeneracy in the $(N+1)$-body problem does give rise to bifurcation for the cases $N=3$ and $N=4$. In order to remove the trivial zero eigenvalues of the Jacobian matrix of system \eqref{eq:cc_def1} --- which correspond to the continuous symmetries of translation, rotation, and scaling--- they adopted relative distances as the fundamental variables (Dziobek coordinates). More precisely, when $N=3$, they consider the augmented potential
\[
V(\lambda_0, \lambda_1, r_{12}, r_{13}, \dots, r_{34}) = U + \lambda_0 (I - {I_0}) + \frac{\lambda_1}{2} S,
\]
where
 $S$ is a function of relative distances that determines whether the configurations are coplanar. Then the equations defining central configurations are equivalent to
\[
\frac{\partial V}{\partial \lambda_0} = 0,\quad 
\frac{\partial V}{\partial \lambda_1} = 0,\quad 
\frac{\partial V}{\partial r_{ij}} = 0.
\]
Let $F = [F_1, F_2, \dots, F_8]=0$ denote the corresponding system. By continuing Palmore, Meyer and Schmidt examined degeneracy especially bifurcations for two specific configurations:
{(1)} The four-body problem with three equal masses sitting at the vertices of an equilateral triangle while a fourth mass at the center. {(2)} The five-body problem with four equal masses at the vertices of a square and a fifth at the center. {These are the {first} explicit examples showing the birth or death of additional families of central configurations stemming from degenerate ones.}


Barros and Leandro (\cite{BL11}, 2011 and \cite{BL14}, 2014) studied the degeneracy, bifurcations, and enumeration for the planar restricted $4$-body problem.

Later Rusu and Santoprete (\cite{Santoprete2015}, 2015) investigated bifurcations of central configurations in the four-body problem with some equal masses, more precisely the
case where three of the masses are equal and the case where there are two
pairs of equal masses which we will also address later. By using the relative distances as variables as did by Meyer and Schmidt (\cite{Meyer1988}), and applying interval arithmetic and the Krawczyk operator (\cite{Moore}) with equivariant bifurcation theory combined, they provided a rigorous computer-assisted proof of the existence of bifurcations. Their results showed that symmetric configurations can undergo bifurcations precisely when the Hessian of the potential restricted to the shape sphere becomes singular. Moreover, they identified specific mass parameters at which the kite configuration loses non-degeneracy.


\subsection{The Kite Family and Previous Results}

Rather than working in the full eight-dimensional configuration space for the planar $4$-body problem, several authors have investigated degeneracy within subspaces restricted by symmetries. One important example is the planar kite configuration, in which two masses lie on the axis of symmetry while the remaining two equal masses are positioned symmetrically with respect to that axis. Such central configurations could be convex or concave with dramatically different character as we will see later.

Bernat, Llibre, and P\'erez-Chavela (\cite{Bernat2009}, 2009) studied planar four-body central configurations with three equal masses and characterized the resulting kite central configurations and their symmetries, paving the way for the study of degeneracy in symmetric settings. Roberts (\cite{Roberts2025}, 2025) further investigated the existence, uniqueness, and linear stability of kite central configurations. By restricting the analysis to the symmetry-reduced space, he numerically identified a curve of degenerate concave central configurations. However, bifurcation curves connecting symmetric and asymmetric configurations could not be detected within this restricted framework.

More recently, Liu and Xie \cite{Liu2026} studied symmetric central configurations in the concave four-body problem with two pairs of equal masses. A one-parameter family of symmetric kite central configurations was analyzed within a subspace restricted by symmetry. There, the Jacobian generically has corank one, and only a single fold bifurcation occurs at a critical mass $m=m_0$. When the same family is viewed in the full planar configuration space, however, additional degeneracies emerge. In particular, at a different critical mass $\tilde m^{*}$ , a symmetric central configuration that appears nondegenerate in the restricted space becomes degenerate in the full space. This degeneracy gives rise to a pitchfork bifurcation and the emergence of asymmetric central configurations, as demonstrated in \cite{Santoprete2015}. As observed in Remark~2 of \cite{Liu2026}, this phenomenon is completely hidden by the imposed symmetry and therefore cannot be detected through a symmetry-reduced analysis. By contrast, the fold bifurcation at $m=m_0$ corresponds to a degeneracy that persists both in the restricted subspace and in the full configuration space, leading to the disappearance of symmetric solutions beyond the bifurcation point.

These examples illustrate that degeneracy is not merely a property of a particular symmetry class, but rather an intrinsic feature of the full configuration space. Restricting attention to symmetry-invariant subspaces may conceal important degeneracy mechanisms and thus reveal only a partial picture of the underlying bifurcation structure. This observation motivates the present work and highlights the necessity of studying degeneracy in the full planar configuration space.

A major challenge in such an approach is that the Jacobian matrix of the central configuration equations naturally possesses trivial zero eigenvalues arising from the invariance of the problem under translation, rotation, and scaling. These symmetries obscure the detection of genuine degeneracies and complicate the analysis of bifurcation phenomena. Previous studies have typically circumvented this difficulty by introducing configuration-dependent coordinates that reduce the number of variables and eliminate the trivial kernel. While effective in specific settings, such coordinate reductions are often tailored to particular symmetry classes and may not be suitable for investigating degeneracy in the full configuration space.

\subsection{Contributions of the Paper}

In this paper, we investigate the degeneracy of planar four-body kite central configurations in the full configuration space. Using the direct computational framework developed in \cite{Sun2025}, we derive explicit algebraic conditions characterizing degenerate kite central configurations. Through a combination of analytical methods and rigorous interval arithmetic computations, we identify the parameter values where degeneracy occurs and analyze the associated bifurcation structure.

Our main result shows that the degeneracy locus comprises two distinct curves in the mass-parameter space. One curve coincides with the degeneracy previously found by Roberts \cite{Roberts2025} in the symmetry reduced setting. The second curve emerges only in the full configuration space and was therefore invisible to symmetry-restricted analyses. These two curves are tangent to each other at the special point which is the well-known degenerate equilateral triangle configuration with one at its center. This newly discovered curve governs bifurcations between symmetric and asymmetric central configuration branches, demonstrating that symmetry reductions can mask essential degeneracy phenomena.

Putting together, these results provide a more complete characterization of degeneracy in $4$-body kite central configurations and enhance our understanding of the intricate relationships among symmetry, degeneracy and bifurcation in the planar $4$-body problem. More generally, they underscore the necessity of studying central configurations in the full configuration space, rather than exclusively within symmetry-invariant subspaces.

The remainder of the paper is organized as follows: In \S 2, we recall the direct computational framework of \cite{Sun2025} to study the degeneracy of central configurations. \S 3 is about the settings for the kite configurations and the meaning of degeneracy in the restricted and full space framework, preparing the background for later sections. We derive an interesting factorization of the Jacobian (Theorem \ref{Prop:J2Entries}), and the factors correspond precisely the bifurcations along the vertical and horizontal directions.
The following two sections constitute the main body of the paper.
In \S 4, we establish the nondegeneracy of convex kite central configurations in restricted subspace (Theorem \ref{nondegrestricted}) and in full space (Theorem \ref{nondegfull}) by interval algorithms.
In \S 5, we combine the interval arithmetic, the Krawczyk operator, and the quantitative implicit function theorem to study the degenerate concave kite central configurations. We first show the existence of the unique smooth degeneracy curve in the restricted case (Theorem \ref{thm:DegReduced}). In the full space, we prove that there are two degeneracy curves which we call horizontal and vertical degeneracy curve (Theorem \ref{thm:Fh} and Theorem \ref{thm:Fv}), and some interesting properties are established. The central configuration of equilateral triangle with a fourth mass at the center play a prominent role as the only intersection point of the vertical and horizontal degeneracy curves, and all interesting properties emerge there. We also show the behaviors of the masses along the degeneracy loci in Figure 6, Figure 7 and Figure 8. 
In the Appendix we provide the proof of a technical lemma used in the proof of Theorem \ref{Prop:J2Entries}.

{Once the degeneracy of central configurations are clear, the natural next step is the bifurcation analysis which we leave to the future.}
\section{Degeneracy of Central Configurations: a Direct and Full Space Approach}\label{secDeg}
 Using the notation in \cite{Sun2025}, let $q_j \in \mathbf{R}^2$ be the position of the $j^{th}$ body in plane with mass $m_j$. Let $F(q,m)=[F_i(q,m)]$ be the function of $\mathbf{R}^{2N}\times \mathbf{R}^{N}\rightarrow \mathbf{R}^{2N}$ defined by
\begin{equation}\label{CF3}
F_i(q,m)=\sum_{j=1,j\not=i}^{N}
\frac{m_j(q_j-q_i) }{\|q_j-q_i\|^3}+\frac{U}{I}
(q_i-c)  \hspace{1cm} 1\leq i\leq N,
\end{equation}
where the potential $U$ and the moment of inertia $I$ are given by (\ref{NU}) and (\ref{NI}).
The configuration $q_0$ is a central configuration for $m_0$ which corresponds  to $F(q_0,m_0)=0$. Then the translation, rotation, and scaling of $q_0$  are also central configurations.

Due to invariants of central configurations under translation, rotation, and scaling, there are trivial zero eigenvalues of its Jacobian matrix. The paper \cite{Sun2025} presented four different equivalent forms to study the degeneracy of central configurations by systematically removing trivial zero eigenvalues. Here is the Form IV: four trivial zero eigenvalues for planar central configurations. 

\begin{lemma}[\textbf{Proposition 3.12} in \cite{Sun2025}]\label{ThmForm3}
    Let $q_0$ be a planar central configuration of mass $m_0$ defined by equation \eqref{CF3} with $i^{th}$ position ${q_0}_i=[x_i, y_i].$ Let $$P =\left[\begin{array}{cc}
B_1 & 0 
\\
 B_2 & I
\end{array}\right]$$ be an invertible matrix with $[B_1,B_2]^T$ constructed by the four zero eigenvectors corresponding to translation, rotation, and scaling. A typical $P$ looks like
\[
P= \left[\begin{array}{cccccccc}
1 & 0 & -y_1 & x_1 & 0 & 0 & 0 & 0 \\
0 & 1 & x_1  & y_1 & 0 & 0 & 0 & 0 \\
1 & 0 & -y_2 & x_2 & 0 & 0 & 0 & 0 \\
0 & 1 & x_2  & y_2 & 0 & 0 & 0 & 0 \\
\cdots & & & \cdots & & & \cdots &\\
\cdots & & & \cdots & & & \cdots &\\
1 & 0 & -y_N & x_N & 0 & 0 & 1 & 0 \\
0 & 1 & x_N  & y_N & 0 & 0 & 0 & 1
\end{array}\right].
\]
  Then  $P^{-1}Jac(F)|_{(q_0,m_0)} P$ has the form$ \left[\begin{array}{cc}
0 & J_1
\\
0 & J_2
\end{array}\right]$ with $J_2$ a $(2N-4)\times (2N-4) $ matrix. 
\end{lemma}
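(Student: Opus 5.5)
The plan is to show that the first four columns of $P$, namely $v_1=(1,0,1,0,\dots)$, $v_2=(0,1,0,1,\dots)$, $v_3=(-y_1,x_1,\dots,-y_N,x_N)$ and $v_4=(x_1,y_1,\dots,x_N,y_N)$, all lie in the kernel of $A:=Jac(F)|_{(q_0,m_0)}$. Once this holds, $AP$ has its first four columns equal to zero. Hence $P^{-1}AP$ also has its first four columns zero, which is exactly the block form $\left[\begin{array}{cc} 0 & J_1\\ 0 & J_2\end{array}\right]$ with $J_2$ of size $(2N-4)\times(2N-4)$.

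Invertibility of $P$ needs a separate check. Because of its block lower triangular shape, $\det P=\det B_1$, where $B_1$ is the upper-left $(2N-2)\times 4$ part completed by the identity pattern. More directly, $P$ is invertible iff $v_1,\dots,v_4$ together with the last $2N-4$ standard basis vectors span $\mathbb{R}^{2N}$. This is equivalent to the $4\times4$ minor of $[v_1,\dots,v_4]$ on the rows of the two bodies not covered by the identity being nonzero. For the displayed choice, that minor comes from the rows of $q_1,q_2$. A short computation shows it equals $\pm\|q_1-q_2\|^2\neq 0$, since bodies do not collide. In general one only needs to choose the identity block so that the complementary two bodies are distinct, which is the sense of ``a typical $P$''.

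For the kernel claim I would use invariance of $F$ along one-parameter families of central configurations, and then differentiate at $t=0$:
\begin{itemize}
\item \emph{Translation.} For $q_i\mapsto q_i+t e$, the differences $q_j-q_i$ and $q_i-c$ are unchanged, and $U$, $I$ are invariant. So $F(q+te,m)=F(q,m)$ identically, and differentiating gives $Av_1=Av_2=0$. This holds at any $q$, not only at central configurations.
\item \emph{Rotation.} Rotation $R_t$ gives $F(R_tq,m)=R_tF(q,m)$, since every term is equivariant and $U$, $I$ are invariant. Differentiating gives $Av_3=\dot R_0F(q_0,m_0)=0$, because $F(q_0,m_0)=0$.
\item \emph{Scaling.} With $q\mapsto sq$, the first term of $F$ scales as $s^{-2}$ and $U/I$ scales as $s^{-3}$, so $\frac{U}{I}(q_i-c)$ also scales as $s^{-2}$. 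Thus $F(sq,m)=s^{-2}F(q,m)$, and differentiating at $s=1$ gives $Av_4=-2F(q_0,m_0)=0$.
\end{itemize}

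The only step needing care is the invertibility of $P$, that is, the minor computation. The kernel argument is routine once the homogeneity and equivariance identities are written down. I would state the minor explicitly, $\det\left[\begin{smallmatrix}1&0&-y_1&x_1\\0&1&x_1&y_1\\1&0&-y_2&x_2\\0&1&x_2&y_2\end{smallmatrix}\right]=-\big((x_2-x_1)^2+(y_2-y_1)^2\big)$, after subtracting rows.
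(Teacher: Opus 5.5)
Your proof is correct and follows the argument the paper relies on. The paper does not prove this lemma; it cites \cite{Sun2025} and describes the first four columns of $P$ as the zero eigenvectors coming from translation, rotation and scaling. You verify exactly that: translation invariance, rotation equivariance and the degree $-2$ homogeneity of $F$, together with $F(q_0,m_0)=0$, put those columns in $\ker Jac(F)|_{(q_0,m_0)}$, so $Jac(F)P$, and hence $P^{-1}Jac(F)P$, has vanishing first four columns.

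One slip: in $\det P=\det B_1$, the block $B_1$ is the $4\times 4$ upper-left block (the rows of $q_1,q_2$), not a ``$(2N-2)\times 4$ part''. Also, invertibility of $P$ is already a hypothesis of the lemma, so your minor computation $-\|q_1-q_2\|^2$ is a correct but optional check.
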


\begin{definition}[\textbf{Nondegeneracy of a Central Configuration} (Form IV in \cite{Sun2025})]
    A central configuration \( q_0 \) for mass \( m_0 \) is said to be nondegenerate if \( \det J_2 \neq 0 \) in Lemma \ref{ThmForm3}. Otherwise, it is considered to be degenerate.
\end{definition}
\begin{theorem}\label{Thm:DegForm}
    Assume Lemma \ref{ThmForm3} holds. Let $Jac(F)|_{(q_0,m_0)} =\left[\begin{array}{cc}
J_{11} & J_{12} 
\\
 J_{21} & J_{22}
\end{array}\right]$. Then $J_2= J_{22}-B_2B_1^{-1}J_{12}$.
\end{theorem}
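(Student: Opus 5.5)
The plan is a direct block-matrix computation, using only the structure supplied by Lemma \ref{ThmForm3}. Throughout, write $J=Jac(F)|_{(q_0,m_0)}$. Partition $J$ conformally with $P$: the first block row and column have size $4$, the second have size $2N-4$. Then $B_1$ is $4\times 4$, $B_2$ is $(2N-4)\times 4$, and the identity block is $(2N-4)\times(2N-4)$. Invertibility of $P$ is equivalent to invertibility of $B_1$, because $\det P=\det B_1$ for this block lower-triangular matrix. For the typical $P$ displayed in the lemma, this amounts to the first two bodies being distinct points.

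\textbf{Step 1: invert $P$.} Since $P$ is block lower triangular with an identity block, one checks directly that
\[
P^{-1}=\left[\begin{array}{cc} B_1^{-1} & 0\\ -B_2B_1^{-1} & I\end{array}\right].
\]

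\textbf{Step 2: compute $JP$.} Multiplying out gives
\[
JP=\left[\begin{array}{cc} J_{11}B_1+J_{12}B_2 & J_{12}\\ J_{21}B_1+J_{22}B_2 & J_{22}\end{array}\right].
\]
The first block column of $JP$ is $J$ applied to the four vectors $[B_1,B_2]^T$ coming from translation, rotation and scaling. By Lemma \ref{ThmForm3} these are null vectors of $J$, so this block column vanishes. In any case the conclusion of the lemma already forces the first block column of $P^{-1}JP$ to be zero, and we only need the second column.

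\textbf{Step 3: read off the $(2,2)$ block.} Multiplying $P^{-1}$ into the second block column $\left[\begin{array}{c}J_{12}\\ J_{22}\end{array}\right]$ gives
\[
\left[\begin{array}{c} B_1^{-1}J_{12}\\ -B_2B_1^{-1}J_{12}+J_{22}\end{array}\right].
\]
Comparing with the form $\left[\begin{array}{cc}0&J_1\\0&J_2\end{array}\right]$ in Lemma \ref{ThmForm3}, we obtain $J_1=B_1^{-1}J_{12}$ and $J_2=J_{22}-B_2B_1^{-1}J_{12}$, which is the claim.

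There is no real obstacle here. The only point needing care is that the partition of $J$ matches that of $P$, so that $J_{12}$ is $4\times(2N-4)$ and $J_{22}$ is $(2N-4)\times(2N-4)$, and that $B_1$ is invertible, which was noted at the start.
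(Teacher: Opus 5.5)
Your proposal is correct and takes the same approach as the paper: invert the block lower-triangular $P$ explicitly, then read off the $(2,2)$ block of $P^{-1}\,Jac(F)\,P$, which is $J_{22}-B_2B_1^{-1}J_{12}$. You also fill in details the paper leaves to ``direct computation'': the invertibility of $B_1$ (equivalently $q_1\neq q_2$) and the explicit block products.
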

\begin{proof}
  It is easy to get $P^{-1}=\left[\begin{array}{cc}
B_1^{-1} & 0
\\
-B_2B_1^{-1} & I
\end{array}\right]$ by the definition of $P$.  By direct computation, we can prove that $J_2$ has the given form. 
\end{proof}
A key consequence of this theorem is that degeneracy in the full planar four-body problem can be detected by analyzing the  $(4\times4)$  matrix  $(J_2$), rather than the full  $(8\times8)$  Jacobian considered in previous studies \cite{Meyer1988,Santoprete2015}. Thus, the trivial degeneracies arising from translation, rotation, and scaling are eliminated automatically, leaving a compact matrix that captures all essential degeneracy information. We now employ this framework to investigate the degeneracy of kite central configurations.

\section{Kite Central Configurations: Settings and Degeneracy}
 Without loss of generality, assume that $m_1=m_2 =1$ and
the four bodies of the kite have positions (see figure \ref{fig1})
\begin{equation}\label{KiteCon}
q_1=(-1,0), q_2=(1,0), q_3=(0, a), \hbox{ and }  q_4=(0, b).
\end{equation}
 Unless otherwise stated, we assume that $m_3$ is below $m_4$, i.e. $a<b$ and $m_4$ is above the $x$-axis, i.e. $b>0.$ There are two types of configurations: convex when $a<0<b$ (see right of Figure \ref{fig1}), and concave when $0<a<b$ (see left of Figure \ref{fig1}). 

 \begin{figure}[!ht]
\centering
\begin{minipage}[b]{0.48\textwidth}
    \centering
    \includegraphics[width=\textwidth]{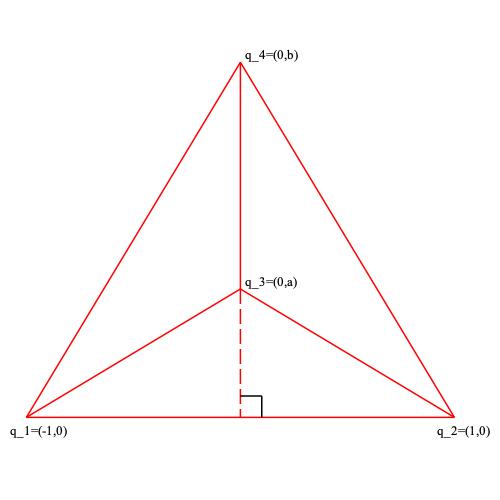}
\end{minipage}
\hfill
\begin{minipage}[b]{0.48\textwidth}
    \centering
    \includegraphics[width=\textwidth]{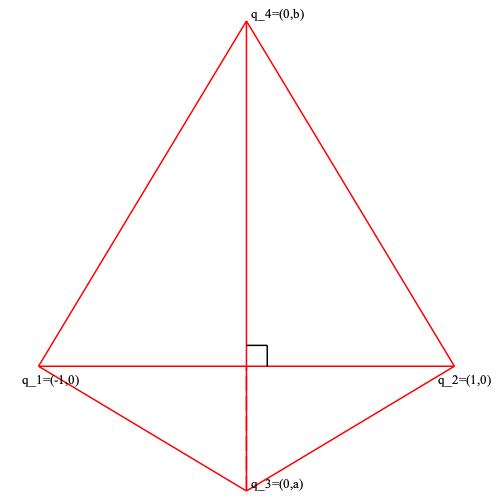}
\end{minipage}
\caption{\label{fig1}  Two kite configurations: The concave configuration (Left) has $0<a<b$ with $m_3$ inside the triangle formed by $m_1$, $m_2$ and $m_4$. The convex configuration (Right) has $a<0<b.$  }
\end{figure}

By the symmetry of the kite configurations, 
the eight equations (\ref{CF3}) of CC are reduced to two equations as follows:
\begin{equation}\label{eqR}
 \left\{\begin{array}{l}    \frac{b \left((b^{2}+1)^{\frac{3}{2}} -8\right)}{4 \left(b^{2}+1\right)^{\frac{3}{2}}}+ \frac{(b-a)^3-(a^2+1)^{\frac{3}{2}}}{(b-a)^2(a^2+1)^{\frac{3}{2}}}m_3=0,\\
  \frac{a \left((a^{2}+1)^{\frac{3}{2}}-8\right)}{4 \left(a^{2}+1\right)^{\frac{3}{2}}}+\frac{\left(b^{2}+1\right)^{\frac{3}{2}}-\left(b -a \right)^{3}}{\left(b^{2}+1\right)^{\frac{3}{2}} \left(b -a \right)^{2}} m_4=0.\\
   \end{array}\right.
\end{equation}
Let \begin{equation}\label{eqnm3}
   n_{m_3}=b \left(a^{2}+1\right)^{\frac{3}{2}} \left(a -b \right)^{2}\left(\left(b^{2}+1\right)^{\frac{3}{2}}-8\right), 
   \end{equation}
    \begin{equation}\label{eqdm3}
    d_{m_3}= 4 \left(b^{2}+1\right)^{\frac{3}{2}} \left(\left(a^{2}+1\right)^{\frac{3}{2}}+\left(a -b \right)^{3}\right),
    \end{equation}
    \begin{equation}\label{eqnm4}
       n_{m_4}= \left(b^{2}+1\right)^{\frac{3}{2}} a \left(a -b \right)^{2}\left(\left(a^{2}+1\right)^{\frac{3}{2}}-8\right), 
    \end{equation}
   \begin{equation}\label{eqdm4}
        d_{m_4}=-4 \left(a^{2}+1\right)^{\frac{3}{2}} \left(\left(b^{2}+1\right)^{\frac{3}{2}}+\left(a -b \right)^{3}\right).
   \end{equation}
   Since $b>a,$ equations \eqref{eqR} are equivalent to 
   \begin{equation}\label{eqR2}
 \left\{\begin{array}{l} 
   g_1\equiv n_{m_3}-d_{m_3}m_3=0,\\
   g_2\equiv n_{m_4}-d_{m_4}m_4=0. 
     \end{array}\right.
\end{equation}
The following theorem is obvious. 
\begin{theorem}
    Assume that the kite shape configuration $q$ is normalized as \eqref{KiteCon}, and that $d_{m_3}\not=0$ and $d_{m_4}\not=0$. Then $q$ is a central configuration corresponding to mass vector $m=[1,1, m_3, m_4]$ if and only if $m_3$ and $m_4$ are given explicitly by:
    \begin{equation}\label{eqm34}
        m_3=\frac{n_{m_3}}{d_{m_3}}, \hbox{ and } m_4= \frac{n_{m_4}}{d_{m_4}}.
    \end{equation}
\end{theorem}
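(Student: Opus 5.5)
The plan is to reduce the eight scalar equations $F(q,m)=0$ of \eqref{CF3} for the kite \eqref{KiteCon} to the pair \eqref{eqR}. I would then check that \eqref{eqR} is equivalent to \eqref{eqR2}, and finally solve the linear equations for $m_3,m_4$. The only substantive part is the reduction to \eqref{eqR}; the rest is algebra.

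\textbf{Step 1: exploit the symmetry.} With $m=[1,1,m_3,m_4]$ the configuration is invariant under the reflection $x\mapsto -x$, which swaps bodies $1$ and $2$ and fixes $3$ and $4$. The center of mass is $c=(0,c_y)$ with $c_y=(m_3a+m_4b)/M$, where $M=2+m_3+m_4$. The reflection symmetry gives three facts:
\begin{itemize}
\item[(i)] the $x$-components of $F_3$ and $F_4$ vanish identically;
\item[(ii)] $F_2$ is the mirror image of $F_1$, so only $F_1$ needs to be considered;
\item[(iii)] the equations still to be imposed are the $x$- and $y$-components of $F_1$ and the $y$-components of $F_3$ and $F_4$.
\end{itemize}
This leaves four equations in the unknowns $m_3,m_4$, with $\lambda=U/I$.

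\textbf{Step 2: remove the redundant equations.} Two linear identities hold for every configuration:
\[
\sum_i m_iF_i=0, \qquad \sum_i m_i\, q_i\wedge F_i=0 .
\]
The first follows from Newton's third law and $\sum_i m_i(q_i-c)=0$. The second holds because $q_i\wedge(q_i-c)$ summed with the weights $m_i$ equals $-c\wedge\sum_i m_i(q_i-c)=0$.

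The $y$-part of the first identity expresses the $y$-component of $F_1$ through the $y$-components of $F_3$ and $F_4$. In addition, the relation $\lambda=U/I$ is the scalar identity $\sum_i m_i\,(q_i-c)\cdot F_i=0$, which holds automatically. Using it, the $x$-component of $F_1$ can be eliminated once the $y$-equations hold.

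So $F=0$ is equivalent to the two equations obtained from the $y$-components of $F_3$ and $F_4$ with $\lambda$ eliminated. A convenient way to eliminate $\lambda$ is to take $F_{3,y}-F_{4,y}$ combined with $F_{1,x}$; the value $\lambda$ can be read off from $F_{1,x}=0$. A direct computation then yields \eqref{eqR}. Note that each equation in \eqref{eqR} is linear in a single mass, which reflects a well-chosen combination of equations.

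\textbf{Step 3: pass to \eqref{eqR2} and solve.} Multiplying the first equation of \eqref{eqR} by $4(b^2+1)^{3/2}(a^2+1)^{3/2}(b-a)^2\neq 0$ (here $b\ne a$) gives $g_1=0$, and similarly for the second equation. The factor $(b-a)^3=-(a-b)^3$ accounts for the sign conventions in $d_{m_3}$ and $d_{m_4}$. Since $d_{m_3}\neq0$ and $d_{m_4}\neq0$, the equations $g_1=0$ and $g_2=0$ are equivalent to \eqref{eqm34}. Conversely, masses given by \eqref{eqm34} satisfy \eqref{eqR}, hence $F=0$ by Step 2.

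\textbf{Main obstacle.} The hard step is Step 2: choosing the combination that eliminates $\lambda$ and verifying, through the identities above, that the discarded components are truly implied by the two that remain. That verification is what makes the statement an "if and only if".
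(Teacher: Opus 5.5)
Your outline follows the same route as the paper, which simply asserts the symmetry reduction to \eqref{eqR} and calls the statement obvious. Steps 1 and 3 are fine, but Step 2 does not establish the ``if'' direction.

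In $F$ the multiplier is the function $\lambda=U/I$, not a free unknown. Replacing it by $\frac14+\frac{m_3}{r_a^3}+\frac{m_4}{r_b^3}$ therefore amounts to imposing $F_{1,x}=0$. When you start from masses satisfying \eqref{eqR}, that is one of the equations you still have to prove, so ``hence $F=0$ by Step 2'' is circular. What you actually showed is $\{F_{3,y}=F_{4,y}=0\}\Rightarrow F=0$; you never showed that \eqref{eqR} implies $F_{3,y}=F_{4,y}=0$.

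Your recipe for \eqref{eqR} is also off. $F_{3,y}-F_{4,y}+(a-b)F_{1,x}$ produces only the difference of the two equations, and $F_{3,y}$, $F_{4,y}$ with $\lambda$ substituted still contain $c_y$. In fact, identically in $\lambda$ (the $\lambda$- and $c_y$-terms cancel), the left-hand sides of the first and second equations in \eqref{eqR} are
\[
E_1=F_{4,y}-F_{1,y}+b\,F_{1,x},\qquad E_2=F_{3,y}-F_{1,y}+a\,F_{1,x}.
\]
So the component $F_{1,y}$ you discarded is needed.

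The repair uses your own identities, applied to $E_1,E_2$ instead. If $E_1=E_2=0$, then $F_{3,y}=F_{1,y}-aF_{1,x}$ and $F_{4,y}=F_{1,y}-bF_{1,x}$. Substitute these into $\sum_i m_i(q_i-c)\cdot F_i=0$, i.e.
\[
-2F_{1,x}-2c_yF_{1,y}+m_3(a-c_y)F_{3,y}+m_4(b-c_y)F_{4,y}=0.
\]
The coefficient of $F_{1,y}$ is $m_3a+m_4b-Mc_y=0$. The coefficient of $F_{1,x}$ is $-\bigl(2+m_3a^2+m_4b^2-c_y(m_3a+m_4b)\bigr)=-I$. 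Hence $I\,F_{1,x}=0$, so $F_{1,x}=0$. The $y$-part of $\sum_i m_iF_i=0$ then gives $M F_{1,y}=0$, so all components vanish. This uses $I\neq0$ and $M\neq0$, which hold for positive masses and are needed anyway for $F$ to be defined.

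Alternatively, treat $\lambda$ as a free unknown as in \eqref{eq:cc_def1}. Then $F_{1,x}=0$ merely defines $\lambda$, the momentum identity alone closes the converse, and $\lambda=U/I$ follows afterwards from the virial identity. The angular-momentum identity is vacuous for the kite and plays no role.
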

\subsection{Degeneracy in Restricted Kite Symmetrical Subspace}
The degeneracy of kite central configurations has been studied recently by Bernat,  Llibre, and  P\'erez-Chavela (\cite{Bernat2009}), Roberts (\cite{Roberts2025}), and Liu and Xie (\cite{Liu2026, Liu2026_2}) in the restricted symmetrical subspace under some extra constraints. We will first investigate the degeneracy of the reduced equations \eqref{eqR2} with respect to position variables $x=(a,b)$ and mass parameters $\mu=(m_3, m_4),$ i.e. $$G(x,\mu)=[g_1,g_2]^T=[0,0]^T.$$
The Jacobian of $G$ is a $2\times 2$ matrix
$$Jac(G)=\left[\begin{array}{cc}
\frac{\partial n_{m_3}}{\partial a} -\frac{\partial d_{m_3}}{\partial a}m_3 & \frac{\partial n_{m_3}}{\partial b} -\frac{\partial d_{m_3}}{\partial b}m_3
\\
\frac{\partial n_{m_4}}{\partial a} -\frac{\partial d_{m_4}}{\partial a}m_4 & \frac{\partial n_{m_4}}{\partial b} -\frac{\partial d_{m_4}}{\partial b}m_4
\end{array}\right].$$
At a kite central configuration, by using \eqref{eqm34}, we have 
$$Jac(G)=\left[\begin{array}{cc}
\frac{1}{d_{m_3}}\left(\frac{\partial n_{m_3}}{\partial a}d_{m_3} -\frac{\partial d_{m_3}}{\partial a}n_{m_3}\right) & \frac{1}{d_{m_3}}\left(\frac{\partial n_{m_3}}{\partial b}d_{m_3} -\frac{\partial d_{m_3}}{\partial b}n_{m_3}\right)
\\
\frac{1}{d_{m_4}}\left(\frac{\partial n_{m_4}}{\partial a}d_{m_4} -\frac{\partial d_{m_4}}{\partial a}n_{m_4}\right) & \frac{1}{d_{m_4}}\left(\frac{\partial n_{m_4}}{\partial b}d_{m_4} -\frac{\partial d_{m_4}}{\partial b}n_{m_4}\right)
\end{array}\right].$$
The degeneracy of the kite central configurations is characterized by the equation $\det(Jac(G)) $ $=0$. To simplify the computations, let $F_{red}(a,b)=d_{m_3}d_{m_4} \det\left( Jac(G)\right)$, that is
\begin{equation}\label{Dab_reduced}
  F_{red}(a,b)= \left(\frac{\partial n_{m_3}}{\partial a}d_{m_3} -\frac{\partial d_{m_3}}{\partial a}n_{m_3}\right)\left(\frac{\partial n_{m_4}}{\partial b}d_{m_4} -\frac{\partial d_{m_4}}{\partial b}n_{m_4}\right)-  
\end{equation}
$$\left(\frac{\partial n_{m_4}}{\partial a}d_{m_4} -\frac{\partial d_{m_4}}{\partial a}n_{m_4}\right)\left(\frac{\partial n_{m_3}}{\partial b}d_{m_3} -\frac{\partial d_{m_3}}{\partial b}n_{m_3}\right).
$$
When $d_{m_3}\not=0$ and $d_{m_4}\not=0$,  $F_{red}(a,b)=0$ if and only if $\det(Jac(G))=0$. 

This expression will be analyzed further for both convex and concave configurations to determine the degeneracy of kite central configurations. We should stress that this degeneracy, however, exists only on the restricted symmetric subspace. As observed in \cite{Liu2026}, this restriction does not capture certain points of degeneracy in the whole space, motivating our subsequent analysis in the full space.

\subsection{Degeneracy in Full Configuration Space}
Before stating the degeneracy theorem in full kite space, we present two lemmas that will be used in its proof. \begin{lemma}\label{lem:kernel}
Let
\[
\Phi:\mathbb{R}^2\setminus\{0\}\longrightarrow\mathbb{R}^2,
\qquad
\Phi(z)=\frac{z}{\|z\|^3},
\]
where $z=(z_1,z_2)^T$ and $r=\|z\|=\sqrt{z_1^2+z_2^2}$.
Then the Jacobian matrix of $\Phi$ is
\[
D\Phi(z) = \frac1{r^3}I_2 - \frac3{r^5}zz^T,
\]
or equivalently,
\[
D\Phi(z) =
\begin{pmatrix}
\dfrac1{r^3}-\dfrac{3z_1^2}{r^5}
&
-\dfrac{3z_1z_2}{r^5}
\\[3mm]
-\dfrac{3z_1z_2}{r^5}
&
\dfrac1{r^3}-\dfrac{3z_2^2}{r^5}
\end{pmatrix}.
\]
\end{lemma}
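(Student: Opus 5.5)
We compute the Jacobian of $\Phi$ directly, entry by entry, and then assemble the entries into the compact matrix form $\frac{1}{r^3}I_2-\frac{3}{r^5}zz^T$.

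First, since $r=\|z\|=(z_1^2+z_2^2)^{1/2}$, the chain rule gives $\partial r/\partial z_j = z_j/r$ for $j=1,2$. Consequently
\[
\frac{\partial}{\partial z_j}\left(r^{-3}\right) = -3r^{-4}\frac{z_j}{r} = -\frac{3z_j}{r^5}.
\]
This is the only nontrivial ingredient.

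Next, write $\Phi_i(z)=z_i\,r^{-3}$ for $i=1,2$ and apply the product rule:
\[
\frac{\partial \Phi_i}{\partial z_j} = \frac{\partial z_i}{\partial z_j}\,r^{-3} + z_i\frac{\partial}{\partial z_j}\left(r^{-3}\right) = \frac{\delta_{ij}}{r^3} - \frac{3z_iz_j}{r^5},
\]
where $\delta_{ij}$ is the Kronecker delta. Specializing to $(i,j)\in\{1,2\}^2$ gives the four entries of the explicit $2\times 2$ matrix. The diagonal entries are $\frac{1}{r^3}-\frac{3z_i^2}{r^5}$, and both off-diagonal entries equal $-\frac{3z_1z_2}{r^5}$.

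Finally, the $(i,j)$ entry of $zz^T$ is $z_iz_j$, and $\delta_{ij}$ are the entries of $I_2$. The entrywise formula is therefore exactly $D\Phi(z)=\frac{1}{r^3}I_2-\frac{3}{r^5}zz^T$, so the two stated forms are equivalent.

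There is no real obstacle. The only point requiring care is differentiating $r^{-3}$ correctly. Since $\Phi$ is smooth on $\mathbb{R}^2\setminus\{0\}$, the formula holds at every $z\neq 0$.
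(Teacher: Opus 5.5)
Your proof is correct and is the direct calculation the paper has in mind; the paper simply omits it as straightforward. The chain rule for $r^{-3}$ combined with the product rule gives $\partial_j\Phi_i=\delta_{ij}r^{-3}-3z_iz_jr^{-5}$, which yields both stated forms at once.
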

\begin{proof}
    This follows by a straightforward calculation, which we omit for brevity.
\end{proof}
\begin{lemma}\label{lem:cancellation-normalization}
Let
\[
\mathcal{N}(q)=
\begin{pmatrix}
\lambda(q)(q_1-c(q))\\
\lambda(q)(q_2-c(q))\\
\lambda(q)(q_3-c(q))\\
\lambda(q)(q_4-c(q))
\end{pmatrix},
\qquad
\lambda(q)=\frac{U(q)}{I(q)},
\]
and partition its Jacobian according to
 $(x_1,y_1,x_2,y_2)$ and $(x_3,y_3,x_4,y_4):$ 
\[
D\mathcal{N}(q) =
\begin{pmatrix}
N_{11}&N_{12}\\
N_{21}&N_{22}
\end{pmatrix}.
\]
At the kite configuration
$ q_1=(-1,0), q_2=(1,0), q_3=(0,a), q_4=(0,b), $
{with \(m_1=m_2=1\),} one has
\[
N_{22}-B_2B_1^{-1}N_{12} = \lambda I_4.
\]
In particular, the contributions involving \(Dc\) and \(D\lambda\)
cancel identically in the Jacobian.
\end{lemma}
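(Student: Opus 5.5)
The plan is to avoid computing $D\mathcal N$ entrywise. Instead, observe that the operation $J\mapsto J_{22}-B_2B_1^{-1}J_{12}$ is exactly the lower-right block of the product $[-B_2B_1^{-1},\ I]\,J$. Every term of $D\mathcal N$ that involves $Dc$ or $D\lambda$ has its column space inside the span of the trivial (translation and scaling) vectors, and those vectors are annihilated by the row block $[-B_2B_1^{-1},\ I]$.

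\textbf{Step 1 (invertibility of $B_1$).} At the kite configuration with $q_1=(-1,0)$ and $q_2=(1,0)$, the block $B_1$ consists of the first four rows of $P$:
\[
B_1=\begin{pmatrix}1&0&0&-1\\0&1&-1&0\\1&0&0&1\\0&1&1&0\end{pmatrix}.
\]
A direct computation gives $\det B_1=4\neq 0$, so Lemma~\ref{ThmForm3} and Theorem~\ref{Thm:DegForm} apply. Set $L=[-B_2B_1^{-1},\ I_4]$, a $4\times 8$ matrix. Then $L\,[B_1;B_2]=0$, so $L$ kills every column of $[B_1;B_2]$. In particular $L$ kills
\begin{itemize}
\item the two translation vectors $\mathbf 1\otimes e_1$ and $\mathbf 1\otimes e_2$;
\item the scaling vector $q=(x_1,y_1,\dots,x_4,y_4)^T$.
\end{itemize}

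\textbf{Step 2 (product rule).} Write $Q(q)=q-\mathbf 1\otimes c(q)\in\mathbb R^8$, so that $\mathcal N=\lambda Q$. Then
\[
D\mathcal N=\lambda\, I_8-\lambda\,(\mathbf 1\otimes I_2)\,Dc+Q\,(\nabla\lambda)^T,
\]
where $Dc=\frac1M[m_1I_2,\dots,m_4I_2]$ is a $2\times 8$ matrix. The second term has column space contained in the span of $\mathbf 1\otimes e_1$ and $\mathbf 1\otimes e_2$. The third term has column space spanned by $Q=q-c_1(\mathbf 1\otimes e_1)-c_2(\mathbf 1\otimes e_2)$, a combination of the scaling and translation vectors. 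Hence $L$ annihilates both terms, and
\[
L\,D\mathcal N=\lambda L=\lambda\,[-B_2B_1^{-1},\ I_4].
\]

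\textbf{Step 3 (read off the block).} The lower-right $4\times4$ block of $L\,D\mathcal N$ is by definition $N_{22}-B_2B_1^{-1}N_{12}$. By Step 2 it equals the lower-right block of $\lambda[-B_2B_1^{-1},\ I_4]$, which is $\lambda I_4$ (equivalently, the identity's $I_{12}=0$ and $I_{22}=I_4$). This is the claim, and it shows that the $Dc$ and $D\lambda$ contributions cancel identically.

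The only genuinely delicate point is bookkeeping. One must check that the block partition of $D\mathcal N$ matches the partition of $P$, so that $J_{12}$ and $J_{22}$ correspond to the columns $(x_3,y_3,x_4,y_4)$. One must also confirm that the scaling column of $P$ is $q$ measured from the origin, not from $c$. That mismatch is harmless, since $q$ and $Q$ differ by translation vectors, which $L$ also kills. Notably, no specific property of the kite beyond $\det B_1\neq0$ is used.
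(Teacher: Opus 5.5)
Your argument is correct and is essentially the paper's proof. The paper also splits $D\mathcal N$ by the product rule into three parts: the $\lambda I$ part, the $Dc$ part with columns in $R=\mathbf 1\otimes I_2$, and the $D\lambda$ part with columns along $Q=(s_1;s_2)$. It cancels the last two via $CR=R$ and $Cs_1=s_2$, which are exactly your statements $L(\mathbf 1\otimes I_2)=0$ and $LQ=0$.

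The only difference is in how those two identities are justified. The paper checks them by direct multiplication with the explicit $C$. You derive them structurally from $L[B_1;B_2]=0$, which shows the lemma holds at any configuration with $B_1$ invertible and for any masses. One small slip: $\det B_1=-4$, not $4$, which is harmless since only $\det B_1\neq 0$ is used.
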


Applying  Theorem \ref{Thm:DegForm} to kite central configurations, one obtains the following theorem. 
\begin{theorem}\label{Prop:J2Entries}
Let
\[
q_1=(-1,0),\qquad
q_2=(1,0),\qquad
q_3=(0,a),\qquad
q_4=(0,b),
\]
where $a<b$ and $b>0$, and assume $m_1=m_2=1$.
Let
\[
r_a=\sqrt{1+a^2},\qquad
r_b=\sqrt{1+b^2},\qquad
d=b-a,
\]
and
\[
\lambda_0=\frac{U_0}{I_0},
\]
where
\[
U_0=\frac12+\frac{2m_3}{r_a}
+\frac{2m_4}{r_b}
+\frac{m_3m_4}{d},
\]
and
\[
I_0=
2+m_3a^2+m_4b^2
-\frac{(m_3a+m_4b)^2}
{2+m_3+m_4}.
\]

Then the Jacobian matrix
\[
J_2=J_{22}-B_2B_1^{-1}J_{12}
\]
takes the form
\[
J_2=
\begin{pmatrix}
a_{11}&0&a_{13}&0\\
0&a_{22}&0&a_{24}\\
a_{31}&0&a_{33}&0\\
0&a_{42}&0&a_{44}
\end{pmatrix},
\]
where
\[
a_{11}
=
\lambda_0
+\frac{2m_3}{r_a^3}
+\frac{2(2-a^2)}{r_a^5}
-\frac{m_4}{d^3},\qquad
a_{13}
=
m_4\left(
\frac{1}{d^3}
+\frac{2}{r_b^3}
-\frac{3bd}{r_b^5}
\right),
\]
\[
a_{31}
=
m_3\left(
\frac{1}{d^3}
+\frac{2}{r_a^3}
+\frac{3ad}{r_a^5}
\right),\qquad
a_{33}
=
\lambda_0
+\frac{2m_4}{r_b^3}
+\frac{2(2-b^2)}{r_b^5}
-\frac{m_3}{d^3},
\]
\[
a_{22}
=
\lambda_0
-\frac{m_3}{r_a^3}
+\frac{2(2a^2-1)}{r_a^5}
+\frac{2m_4}{d^3},\qquad
a_{24}
=
-m_4\left(
\frac{2}{d^3}
+\frac{1}{r_b^3}
-\frac{3bd}{r_b^5}
\right),
\]
\[
a_{42}
=
-m_3\left(
\frac{2}{d^3}
+\frac{1}{r_a^3}
+\frac{3ad}{r_a^5}
\right),\qquad
a_{44}
=
\lambda_0
-\frac{m_4}{r_b^3}
+\frac{2(2b^2-1)}{r_b^5}
+\frac{2m_3}{d^3}.
\]
Consequently,
\begin{equation}\label{detJ2}
\det(J_2)=
\left(a_{11}a_{33}-a_{13}a_{31}\right)
\left(a_{22}a_{44}-a_{24}a_{42}\right).
\end{equation}
\end{theorem}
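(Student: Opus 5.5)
We split $F$ from \eqref{CF3} into a gravitational part and a normalization part,
\[
F(q)=\mathcal{G}(q)+\mathcal{N}(q),\qquad \mathcal{G}_i(q)=\sum_{j\neq i}m_j\,\Phi(q_j-q_i),
\]
with $\Phi$ as in Lemma \ref{lem:kernel} and $\mathcal{N}$ as in Lemma \ref{lem:cancellation-normalization}. By linearity, Theorem \ref{Thm:DegForm} gives
\[
J_2=\bigl(G_{22}-B_2B_1^{-1}G_{12}\bigr)+\bigl(N_{22}-B_2B_1^{-1}N_{12}\bigr),
\]
where $G_{12},G_{22}$ are the blocks of $D\mathcal{G}$ in the same partition. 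Lemma \ref{lem:cancellation-normalization} says the second bracket equals $\lambda_0 I_4$. At a central configuration $\lambda=U/I$, and evaluating $U$ and $I$ at the kite gives exactly $U_0$ and $I_0$. It therefore remains to compute the gravitational part.

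Each block of $D\mathcal{G}$ is assembled from the $2\times 2$ matrices $D\Phi(q_j-q_i)$ of Lemma \ref{lem:kernel}. The relevant vectors are $q_3-q_1=(1,a)$, $q_3-q_2=(-1,a)$, $q_4-q_1=(1,b)$, $q_4-q_2=(-1,b)$, $q_4-q_3=(0,d)$ and $q_2-q_1=(2,0)$. For the pair $(3,4)$ the matrix $D\Phi$ is diagonal, equal to $\mathrm{diag}(1/d^3,-2/d^3)$. For the pair $(1,2)$ it is $\mathrm{diag}(-1/4,1/8)$. For each pair $(k,3)$ or $(k,4)$ with $k\in\{1,2\}$, the off-diagonal entry is $\mp 3a/r_a^5$ (respectively $\mp 3b/r_b^5$), with the sign flipping between $k=1$ and $k=2$. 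The rows of $G_{22}$ for $q_3,q_4$ contain diagonal $2\times2$ blocks plus contributions from $q_1,q_2$ in which those off-diagonal entries cancel by the reflection symmetry $x\mapsto -x$.

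Next we compute $B_1^{-1}$ explicitly. The first four rows of $P$ give
\[
B_1=\begin{pmatrix}1&0&0&-1\\0&1&-1&0\\1&0&0&1\\0&1&1&0\end{pmatrix},
\]
which is easily inverted. The third and fourth rows of $B_2$ are $(1,0,-b,0)$ and $(0,1,0,b)$, and the first two are $(1,0,-a,0)$ and $(0,1,0,a)$. We then form $B_2B_1^{-1}G_{12}$, where $G_{12}$ is the derivative of the $(q_1,q_2)$ equations with respect to $(q_3,q_4)$.

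Conceptually, the correction $-B_2B_1^{-1}G_{12}$ subtracts the infinitesimal translation, rotation and scaling that keep $q_1,q_2$ fixed. We would organize the bookkeeping by this geometric meaning: the $x$-translation and rotation components produce the terms $\pm 3ad/r_a^5$ and $\pm 3bd/r_b^5$, while the $y$-translation and scaling components combine with the $\mathrm{diag}(-1/4,1/8)$ pieces. Those combinations account for constants like $2(2-a^2)/r_a^5$ once one uses $r_a^2=1+a^2$.

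The block zero pattern should follow from symmetry rather than from computation. Reflection $x\mapsto -x$ commutes with $F$ at the kite configuration, and we choose coordinates $(x_3,y_3,x_4,y_4)$. The reflection acts as $\mathrm{diag}(-1,1,-1,1)$ on these coordinates, and the reduction $P$ is compatible with it, since it maps the trivial eigenvectors to trivial eigenvectors up to sign. Hence $J_2$ commutes with this reflection, so it cannot mix the horizontal variables $x_3,x_4$ with the vertical ones $y_3,y_4$. The claimed checkerboard form follows, and permuting to $(x_3,x_4\mid y_3,y_4)$ gives a block diagonal matrix whose determinant is the product \eqref{detJ2}.

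The main obstacle is the explicit entries, especially the subtraction $B_2B_1^{-1}G_{12}$ and the simplification of the $r_a^{-5}$, $r_b^{-5}$ combinations into the stated compact forms. We would verify these entry by entry, checking each against a symbolic computation. We do not use the central configuration equations \eqref{eqm34} here beyond $\lambda=\lambda_0$.
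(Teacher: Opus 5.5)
Your overall route to the entries is the paper's: split $F$ into the interaction part and the normalization part $\mathcal N$, use Lemma~\ref{lem:cancellation-normalization} to replace $N_{22}-CN_{12}$ by $\lambda_0 I_4$, and assemble $G_{12}$, $G_{22}$ from $D\Phi$. Your equivariance argument for the checkerboard pattern is genuinely different. The paper only observes that the off-diagonal entries of $H_a^{\pm}=D\Phi(\pm 1,a)$, and likewise $H_b^{\pm}$, cancel in the explicit products.

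Your argument is valid once stated precisely. The plain reflection $x\mapsto -x$ does not fix $q_0$, because it exchanges the positions of bodies $1$ and $2$. The map you need is $S(q_1,q_2,q_3,q_4)=(\sigma q_2,\sigma q_1,\sigma q_3,\sigma q_4)$. It commutes with $F$ only because $m_1=m_2$, and on the $(q_1,q_2)$ coordinates it acts as the signed swap $(x_1,y_1,x_2,y_2)\mapsto(-x_2,y_2,-x_1,y_1)$, not as $\mathrm{diag}(-1,1,-1,1)$. One then checks two facts:
\begin{itemize}
\item $S$ sends the $x$-translation, $y$-translation, rotation and scaling columns of $P$ to $-,+,-,+$ times themselves;
\item $Se_k=\pm e_k$ for $k=5,\dots,8$.
\end{itemize}
Hence $P^{-1}SP=\mathrm{diag}(-1,1,-1,1,-1,1,-1,1)$, and $J_2$ commutes with $\mathrm{diag}(-1,1,-1,1)$. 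This gives the decoupling, and the factorization of $\det J_2$, at every kite configuration with $m_1=m_2$ without computing anything. It also builds in the symmetry-breaking versus symmetry-preserving meaning of the two factors. The paper's route gets the zeros as a by-product of a computation it must do anyway.

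The gap is in the explicit entries, which are the real content of the statement. You defer them to an unspecified symbolic check, and the bookkeeping you sketch is wrong.
\begin{itemize}
\item The pair $(1,2)$ enters only through $\Phi(q_2-q_1)$ and $\Phi(q_1-q_2)$, which depend only on $(q_1,q_2)$. So $\mathrm{diag}(-1/4,1/8)$ lives in $J_{11}$ and never appears in $J_2=J_{22}-CJ_{12}$.
\item The term $2(2-a^2)/r_a^5$ in $a_{11}$ is just $-(H_a^++H_a^-)_{11}$ and comes from $G_{22}$. The diagonal blocks of $G_{22}$ are $-(H_a^++H_a^-+m_4H_d)$ and $-(H_b^++H_b^-+m_3H_d)$, with $H_d=D\Phi(0,d)$, and its off-diagonal blocks are $m_4H_d$ and $m_3H_d$.
\item The correction $-CG_{12}$ uses $G_{12}=\begin{pmatrix} m_3H_a^+ & m_4H_b^+\\ m_3H_a^- & m_4H_b^-\end{pmatrix}$. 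The rows of $C$ are $(\tfrac{1}{2},\tfrac{a}{2},\tfrac{1}{2},-\tfrac{a}{2})$, $(-\tfrac{a}{2},\tfrac{1}{2},\tfrac{a}{2},\tfrac{1}{2})$, and the same two with $b$ in place of $a$.
\item This correction contributes $2m_3/r_a^3$ to $a_{11}$, from the $x$-translation and rotation parts of $C$. It contributes $-m_3/r_a^3$ to $a_{22}$, from the $y$-translation and scaling parts.
\item The $ab$ cross terms behind $3ad/r_a^5$ and $3bd/r_b^5$ occur in both blocks: from rotation in the horizontal block, and from scaling in the vertical block.
\end{itemize}
Carrying out these products with $r_a^2=1+a^2$ and $r_b^2=1+b^2$ gives each $a_{ij}$. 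The paper displays $C$, $C\widehat J_{12}$ and $\widehat J_{22}$ explicitly for this purpose. Without this computation the stated formulas are not established.
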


\begin{proof}
Let
\[
\Phi:\mathbb{R}^2\setminus\{0\}\longrightarrow\mathbb{R}^2,
\qquad
\Phi(z)=\frac{z}{\|z\|^3}.
\]
By Lemma~\ref{lem:kernel},
\[
D\Phi(z)
=
\frac{1}{\|z\|^3}I_2
-
\frac{3}{\|z\|^5}zz^T.
\]
For \(i\neq j\), define
\[
H_{ij}=D\Phi(q_j-q_i).
\]
Since
{
\[
D\Phi(-z)=D\Phi(z),
\]
we also have
\[
H_{ij}=H_{ji}.
\]
}
The interaction of the \(j\)-th body with the \(i\)-th body is
\[
m_j\Phi(q_j-q_i).
\]
Differentiating with respect to the two position variables involved
gives
\begin{equation}\label{eq:pair-derivatives}
D_{q_j}\!\left[m_j\Phi(q_j-q_i)\right]
=
m_jH_{ij},
\qquad
D_{q_i}\!\left[m_j\Phi(q_j-q_i)\right]
=
-m_jH_{ij}.
\end{equation}
Thus, the derivative with respect to the position of the other body
has a positive sign, whereas the derivative with respect to the
position of the body on which the force acts has a negative sign. Consider the kite configuration
\[
q_1=(-1,0),\qquad
q_2=(1,0),\qquad
q_3=(0,a),\qquad
q_4=(0,b),
\]
where $m_1=m_2=1, a<b.$ 
For later use, introduce the matrices
\[
H_a^+=D\Phi(1,a),
\qquad
H_a^-=D\Phi(-1,a),\qquad
H_b^+=D\Phi(1,b),
\qquad
H_b^-=D\Phi(-1,b),
\]
and
\[
H_d=D\Phi(0,d).
\]
By Lemma~\ref{lem:kernel},
\[
H_a^+
=
\frac{1}{r_a^5}
\begin{pmatrix}
a^2-2&-3a\\
-3a&1-2a^2
\end{pmatrix},
\qquad
H_a^-
=
\frac{1}{r_a^5}
\begin{pmatrix}
a^2-2&3a\\
3a&1-2a^2
\end{pmatrix},
\]
and similarly,
\[
H_b^+
=
\frac{1}{r_b^5}
\begin{pmatrix}
b^2-2&-3b\\
-3b&1-2b^2
\end{pmatrix},
\qquad
H_b^-
=
\frac{1}{r_b^5}
\begin{pmatrix}
b^2-2&3b\\
3b&1-2b^2
\end{pmatrix}.
\]
Moreover,
\[
H_d
=
\frac{1}{d^3}
\begin{pmatrix}
1&0\\
0&-2
\end{pmatrix}.
\]

Write the vector field as
\[
F=\widehat F+\mathcal N,
\]
where
\[
\widehat F_i
=
\sum_{j\neq i}m_j\Phi(q_j-q_i)
\]
is the interaction part and
\[
\mathcal N_i
=
\lambda(q)(q_i-c(q)),
\qquad
\lambda(q)=\frac{U(q)}{I(q)},
\]
is the normalization part. Partition the corresponding Jacobians
according to the variables
\[
(q_1,q_2)
\quad\text{and}\quad
(q_3,q_4).
\]
Thus,
\[
D\widehat F
=
\begin{pmatrix}
\widehat J_{11}&\widehat J_{12}\\
\widehat J_{21}&\widehat J_{22}
\end{pmatrix} \hbox{ and }
D\mathcal{N}(q)
=
\begin{pmatrix}
N_{11}&N_{12}\\
N_{21}&N_{22}
\end{pmatrix}.
\]

Using \eqref{eq:pair-derivatives}, the block containing the derivatives
of the first two equations with respect to \(q_3\) and \(q_4\) is
\begin{equation}\label{eq:J12-interaction}
\widehat J_{12}
=
\begin{pmatrix}
m_3H_a^+&m_4H_b^+\\
m_3H_a^-&m_4H_b^-
\end{pmatrix}.
\end{equation}

Let
\[
C=B_2B_1^{-1}
=
\begin{pmatrix}
\frac12&\frac a2&\frac12&-\frac a2\\
-\frac a2&\frac12&\frac a2&\frac12\\
\frac12&\frac b2&\frac12&-\frac b2\\
-\frac b2&\frac 12&\frac b2&\frac12
\end{pmatrix}.
\]
To make the reduction explicit, direct matrix multiplication gives

\begin{equation}\label{eq:CJ12-simplified}
C\widehat J_{12}
=
\begin{pmatrix}
-\dfrac{2m_3}{r_a^3}
&0
&-m_4\left(
\dfrac{2}{r_b^3}
-\dfrac{3bd}{r_b^5}
\right)
&0
\\[3mm]
0
&\dfrac{m_3}{r_a^3}
&0
&m_4\left(
\dfrac{1}{r_b^3}
-\dfrac{3bd}{r_b^5}
\right)
\\[3mm]
-m_3\left(
\dfrac{2}{r_a^3}
+\dfrac{3ad}{r_a^5}
\right)
&0
&-\dfrac{2m_4}{r_b^3}
&0
\\[3mm]
0
&m_3\left(
\dfrac{1}{r_a^3}
+\dfrac{3ad}{r_a^5}
\right)
&0
&\dfrac{m_4}{r_b^3}
\end{pmatrix}.
\end{equation}

In particular, \(C\widehat J_{12}\) has the form
\[
C\widehat J_{12}
=
\begin{pmatrix}
*&0&*&0\\
0&*&0&*\\
*&0&*&0\\
0&*&0&*
\end{pmatrix},
\]
which already exhibits the decoupling of the horizontal and vertical
perturbations.

For the lower-right block {$\widehat J_{22}$}, the diagonal {subblocks} are self-derivatives
and therefore carry the negative sign in
\eqref{eq:pair-derivatives}. The off-diagonal blocks are derivatives
with respect to the other bodies and carry the positive sign. Hence
\begin{equation}\label{eq:J22-interaction}
\widehat J_{22}
=
\begin{pmatrix}
-\bigl(H_a^++H_a^-+m_4H_d\bigr)&m_4H_d\\[1mm]
m_3H_d&-\bigl(H_b^++H_b^-+m_3H_d\bigr)
\end{pmatrix}
\end{equation}
$$= \begin{pmatrix} \dfrac{2(2-a^2)}{r_a^5}-\dfrac{m_4}{d^3} & 0 & \dfrac{m_4}{d^3} & 0 \\[3mm] 0 & \dfrac{2(2a^2-1)}{r_a^5}+\dfrac{2m_4}{d^3} & 0 & -\dfrac{2m_4}{d^3} \\[3mm] \dfrac{m_3}{d^3} & 0 & \dfrac{2(2-b^2)}{r_b^5}-\dfrac{m_3}{d^3} & 0 \\[3mm] 0 & -\dfrac{2m_3}{d^3} & 0 & \dfrac{2(2b^2-1)}{r_b^5}+\dfrac{2m_3}{d^3} \end{pmatrix}. $$

By Lemma~\ref{lem:cancellation-normalization}, the normalization
part satisfies
\[
N_{22}-CN_{12}=\lambda_0 I_4.
\]
Consequently,
\begin{equation}\label{eq:J2-interaction-decomposition}
J_2
=
\lambda_0 I_4
+
\widehat J_{22}
-
C\widehat J_{12}.
\end{equation}

Substituting \eqref{eq:CJ12-simplified} and
\eqref{eq:J22-interaction} into
\eqref{eq:J2-interaction-decomposition} and carrying out the matrix
multiplication, we obtain
\[
J_2=
\begin{pmatrix}
a_{11}&0&a_{13}&0\\
0&a_{22}&0&a_{24}\\
a_{31}&0&a_{33}&0\\
0&a_{42}&0&a_{44}
\end{pmatrix}.
\]
The vanishing mixed entries follow from the cancellation of the
off-diagonal entries of \(H_a^+\) with those of \(H_a^-\), and
similarly for \(H_b^+\) and \(H_b^-\). \(a_{ij}\)s are given as in the theorem.

Therefore,
\[\det J_2=
\left(a_{11}a_{33}-a_{13}a_{31}\right)
\left(a_{22}a_{44}-a_{24}a_{42}\right).
\]
\end{proof}

As a result of Theorem \ref{ThmForm3} and Theorem \ref{Prop:J2Entries}, we have
\begin{theorem}\label{Deg:J2}
    Assume that the kite shape $q$ satisfying equation \eqref{KiteCon} is a central configuration for $m=[1,1, m_3, m_4]$, where $m_3$ and $m_4$ satisfy the equation \eqref{eqR2}. Then the central configuration is degenerate if $\det(J_2)$ given by \eqref{detJ2} is zero. 
\end{theorem}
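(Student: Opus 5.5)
This theorem is essentially a corollary that assembles earlier results, so I will prove it by chaining the definition of nondegeneracy (Form IV), Lemma \ref{ThmForm3}, Theorem \ref{Thm:DegForm} and Theorem \ref{Prop:J2Entries}.

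First, I check that the hypotheses of Lemma \ref{ThmForm3} hold at the kite. By assumption, $m_3,m_4$ satisfy \eqref{eqR2}, so $q$ in \eqref{KiteCon} is a central configuration for $m=[1,1,m_3,m_4]$; that is, $F(q,m)=0$ with $F$ as in \eqref{CF3}. I choose $P$ with $[B_1,B_2]^T$ built from the four trivial eigenvectors, corresponding to the translations $(1,0)$ and $(0,1)$, the rotation $(-y_i,x_i)$ and the scaling $(x_i,y_i)$. The only thing to verify is that $B_1$, the upper $4\times 4$ block formed by the rows of $q_1=(-1,0)$ and $q_2=(1,0)$, is invertible. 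Substituting gives
\[
B_1=\begin{pmatrix}1&0&0&-1\\0&1&-1&0\\1&0&0&1\\0&1&1&0\end{pmatrix},
\]
which has nonzero determinant ($=4$ up to sign). It is this inverse that yields the matrix $C=B_2B_1^{-1}$ used in the proof of Theorem \ref{Prop:J2Entries}. So $P$ is invertible and Lemma \ref{ThmForm3} gives the block form
\[
P^{-1}Jac(F)P=\begin{pmatrix}0&J_1\\0&J_2\end{pmatrix}.
\]

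Next, Theorem \ref{Thm:DegForm} identifies $J_2=J_{22}-B_2B_1^{-1}J_{12}$. Theorem \ref{Prop:J2Entries} computes this matrix explicitly at the kite and gives the factorization \eqref{detJ2} of $\det J_2$. By the definition of nondegeneracy (Form IV), the configuration is degenerate exactly when $\det J_2=0$. Hence if \eqref{detJ2} vanishes, the central configuration is degenerate; in fact the converse also holds, so the statement can be strengthened to an ``if and only if''.

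The only point needing care, and the nearest thing to an obstacle, is consistency of evaluation. Theorem \ref{Prop:J2Entries} uses $\lambda_0=U_0/I_0$ computed from the given masses, whereas Lemma \ref{lem:cancellation-normalization} requires $\lambda(q)=U/I$ at the actual configuration, with the center of mass $c$ not at the origin. I will note that $U_0$ and $I_0$ are exactly $U$ and $I$ evaluated at \eqref{KiteCon}: $I_0$ already incorporates the center-of-mass correction $-(m_3a+m_4b)^2/(2+m_3+m_4)$. Moreover, $\det J_2$ is invariant under a change of the complementary basis, since $J_2$ is the matrix of the map induced on the quotient by the trivial eigenspace. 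Therefore the choice of $P$ does not affect the conclusion, and the theorem follows.
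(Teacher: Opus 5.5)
Your proposal is correct and follows the paper's route: the paper obtains this statement directly from Lemma \ref{ThmForm3}, via Theorem \ref{Thm:DegForm} and the Form IV definition, together with Theorem \ref{Prop:J2Entries}. Your additional checks are all correct ($\det B_1=-4\neq 0$, $\lambda_0=U/I$ at the kite, invariance of $\det J_2$ under the choice of complement, and the converse holding by definition), and they only make explicit what the paper leaves implicit.
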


\begin{remark}
 If the coordinates are reordered from
\[
(x_3,y_3,x_4,y_4)
\]
to
\[
(x_3,x_4,y_3,y_4),
\]
then $J_2$ becomes block diagonal:
\[
\begin{pmatrix}
a_{11}&a_{13}&0&0\\
a_{31}&a_{33}&0&0\\
0&0&a_{22}&a_{24}\\
0&0&a_{42}&a_{44}
\end{pmatrix}.
\]
The upper-left block represents the linearization with respect to horizontal ($x$-direction) perturbations and is associated with non-symmetric bifurcations. The lower-right block represents the linearization with respect to vertical ($y$-direction) perturbations and is associated with symmetric bifurcations. 

Since interval arithmetic is not well suited for expressions with denominators that may become small near the boundary of the admissible region, we first reformulate the two factors of $\det(J_2)$  into equivalent polynomial-like form before attempting to locate its zero set. Using notations $r_a=\sqrt{a^2+1}, r_b=\sqrt{b^2+1}$ and $d=b-a$,  equations \eqref{eqnm3}, \eqref{eqdm3}, \eqref{eqnm4} and \eqref{eqdm4} for $nm_3$, $dm_3$, $nm_4$ and $dm_4$ can be written as
$$nm_3=b r_a^{3} d^{2} \left(r_b^{3}-8\right),\quad dm_3= 4 r_b^{3} \left(r_a^{3}-d^{3}\right),$$
$$ nm_4=ar_b^{3} d^{2} \left(r_a^{3}-8\right),\quad dm_4= -4 r_a^{3} \left(r_b^3-d^{3}\right).$$

First note that $\lambda _0$ is also equal to $$\lambda_0= \frac{1}{4}+\frac{m_3}{r_a^3}+\frac{m_4}{r_b^3}.$$
Define the denominator-clearing factors $$C_{factorH}=16 d^{2} \left(d^{3}-r_b^{3}\right) \left(d^{3}-r_a^{3}\right) r_a^{8} r_b^{8},
$$
and $$C_{factorV}=16 d^{2} \left(d^{3}-r_b^{3}\right) \left(d^{3}-r_a^{3}\right) r_a^{5} r_b^{5}.$$
After substituting the expressions for $m_3, m_4$, and $\lambda_0$, we obtain
$$\hat{a}_{11}=-\left(d^{3}-r_a^{3}\right) r_b^{3} \left(a \left(-r_a^{5}+8 a d +8 r_a^{2}\right)-d \left(r_a^{5}+16\right)\right)-3 b \,d^{3} r_a^{5} \left(r_b^{3}-8\right),$$
$$\hat{a}_{13}=-r_a^{3} a \left(r_a^{3}-8\right) \left(3 b \,d^{4}-2 d^{3} r_b^{2}-r_b^{5}\right),\hspace{0.3cm} \hat{a}_{31}=-r_b^{3} b \left(r_b^{3}-8\right) \left(3 a \,d^{4}+2 d^{3} r_a^{2}+r_a^{5}\right),$$
and 
$$\hat{a}_{33}=\left(d^{3}-r_b^{3}\right) r_a^{3} \left(b \left(-r_b^{5}-8 b d +8 r_b^{2}\right)+d \left(r_b^{5}+16\right)\right)+3 a \,d^{3} r_b^{5} \left(r_a^{3}-8\right).$$
Similarly,
$$\hat{a}_{22}=a \left(d^{3}+2 r_b^{3}\right)r_a^{2} \left(r_a^{3}-8\right)+d \left(d^{3}-r_b^{3}\right) \left(r_a^{5}+16 a^{2}-8\right),$$
$$\hat{a}_{24}=a \left(r_a^{3}-8\right) \left(3 b \,d^{4}-d^{3} r_b^{2}-2 r_b^{5}\right), \hspace{0.3cm} \hat{a}_{42}=b \left(r_b^{3}-8\right) \left(3 a \,d^{4}+d^{3} r_a^{2}+2 r_a^{5}\right),$$
and 
$$\hat{a}_{44}=-b \left(d^{3}+2 r_a^{3}\right) r_b^{2} \left(r_b^{3}-8\right)+d \left(d^{3}-r_a^{3}\right) \left(r_b^{5}+16 b^{2}-8\right).$$
Therefore \begin{equation}\label{Fj2h}
    F_{J_{2h}}(a,b)=(a_{11}a_{33}-a_{31}a_{13})C_{factorH}=\hat{a}_{11}\hat{a}_{33}-\hat{a}_{31}\hat{a}_{13}
\end{equation}
  and
  \begin{equation}\label{Fj2v}
    F_{J_{2v}}(a,b)=(a_{22}a_{44}-a_{42}a_{24})C_{factorV}=\hat{a}_{22}\hat{a}_{44}-\hat{a}_{24}\hat{a}_{42}.
\end{equation}
These expressions contain no potentially small denominators and hence are better suited
for rigorous interval evaluations. Since the denominator-clearing factors do not vanish in the
admissible region, the condition $\det(J_2)=0$ is equivalent to $F_{J_{2h}}F_{J_{2v}}=0.$

\end{remark}


\section{Convex Kite Central Configurations}

In this section, we investigate the convex kite central configurations with convention \eqref{KiteCon} and $a<0<b.$ By the positivity of $m_3$ and $m_4$, we have the admissible region for convex kite central configurations. 

\begin{definition}
 Let $\mathcal{R}_X$ denote the open admissible region bounded by the vertical line 
$a = -\sqrt{3}$ and horizontal line $b = \sqrt{3}$, together with the two curves
\[
b = a + \sqrt{a^{2}+1} \quad\text{(blue curve)}, \qquad
a = b - \sqrt{b^{2}+1} \quad\text{(red curve)},
\]
as illustrated in the left panel of Figure~\ref{fig2}. Or put it equivalently,
$$\mathcal{R}_X=\left\{(a,b)| -\sqrt{3} <a< \sqrt{3}- 2,  a + \sqrt{a^{2}+1}<b<\sqrt{3} \right\} $$ $$ \bigcap \left\{ (a,b) |2-\sqrt{3}<b<\sqrt{3}, -\sqrt{3}<a<b - \sqrt{b^{2}+1} \right\}.  $$
The intersection points of these boundary curves are $A_1=[-\sqrt{3},  2 - \sqrt{3}],$  $B_1=[-\frac{\sqrt{3}}{3},$ $ \frac{\sqrt{3}}{3}]$,
     $C_1=[\sqrt{3}-2, \sqrt{3}]$, and $D_1=[-\sqrt{3},\sqrt{3}].$
\end{definition}

\begin{figure}[ht]
    \centering
    \includegraphics[width=0.6\textwidth,height=3in]{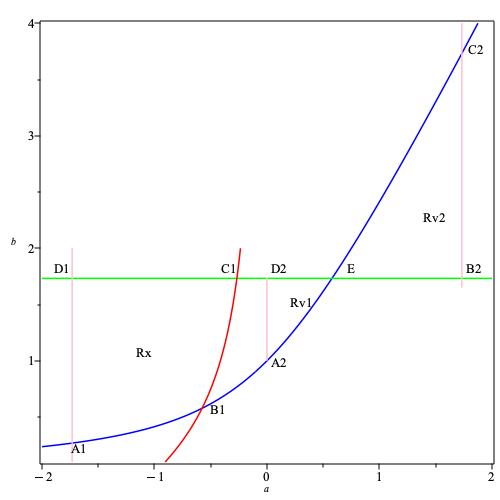}
    \caption{\label{fig2}  Admissible Regions: The admissible region for convex kite CC (Left $\mathcal{R}_{X}$) is bounded by four curves in second quadrant (red curve: $d_{m_4} =0$; blue curve: $d_{m_3}=0$; pink: $n_{m_4}=0$; green: $n_{m_3}=0$). The admissible region for concave kite CC (Right: $\mathcal{R}_{v1}$ and $\mathcal{R}_{v2}$ and special point $E$) is bounded by four curves in first quadrant.  }
\end{figure}

\begin{theorem}
    A kite configuration $q$ satisfying \eqref{KiteCon} is a convex kite central configuration for masses $m = [1,1,m_3,m_4]$ with $m_3>0$ and $m_4>0$ if and only if the point $(a,b)$ lies in the open admissible region $\mathcal{R}_X$. 
\end{theorem}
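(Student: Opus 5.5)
The plan is a sign analysis of the explicit mass formulas \eqref{eqm34}. By the previous theorem, when $d_{m_3}\neq0$ and $d_{m_4}\neq0$, the kite \eqref{KiteCon} is a central configuration exactly when $m_3=n_{m_3}/d_{m_3}$ and $m_4=n_{m_4}/d_{m_4}$. So the statement reduces to determining where both quotients are positive under the standing convex assumption $a<0<b$. The degenerate case where a denominator vanishes will be handled separately at the end.

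First, rewrite the quotients in the notation $r_a,r_b,d=b-a>0$ of the preceding remark:
\[
m_3=\frac{b\,r_a^3 d^2\,(r_b^3-8)}{4r_b^3\,(r_a^3-d^3)},\qquad
m_4=\frac{a\,r_b^3 d^2\,(r_a^3-8)}{-4r_a^3\,(r_b^3-d^3)}.
\]

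\emph{Sign of $m_3$.} Since $b>0$, $m_3>0$ iff $r_b^3-8$ and $r_a^3-d^3$ have the same sign. Note that $r_b<2\iff b<\sqrt3$, and $d<r_a\iff b<a+\sqrt{a^2+1}$. The case ``both negative'' gives $a+\sqrt{a^2+1}<b<\sqrt3$. The case ``both positive'' would require $\sqrt3<b<a+\sqrt{a^2+1}$. This is impossible, because $a<0$ implies $a+\sqrt{a^2+1}<1$.

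\emph{Sign of $m_4$.} Since $a<0$, $m_4>0$ iff $r_a^3-8$ and $r_b^3-d^3$ have the same sign. Note that $r_a<2\iff a>-\sqrt3$, and $d>r_b\iff a<b-\sqrt{b^2+1}$. The case ``both negative'' gives $-\sqrt3<a<b-\sqrt{b^2+1}$. The case ``both positive'' would need $a<-\sqrt3$ together with $b-a<\sqrt{b^2+1}$. This is impossible, since $b-a>b+\sqrt3$ and $(b+\sqrt3)^2=b^2+2\sqrt3 b+3>b^2+1$.

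Intersecting the two conditions gives exactly the description of $\mathcal R_X$ as an intersection of two sets. I will then check that the extra bounds appearing in the definition ($a<\sqrt3-2$ in the first set and $b>2-\sqrt3$ in the second) are automatic. For example, $a+\sqrt{a^2+1}<b<\sqrt3$ forces $a<\sqrt3-2$, because $a\mapsto a+\sqrt{a^2+1}$ is increasing and takes the value $\sqrt3$ at $a=\sqrt3-2$. The bound $b>2-\sqrt3$ follows in the same way from the monotonicity of $b\mapsto b-\sqrt{b^2+1}$, which equals $-\sqrt3$ at $b=2-\sqrt3$. The corner points $A_1,B_1,C_1,D_1$ then follow by solving the boundary equations pairwise.

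The remaining point, and the only real subtlety, is excluding configurations where $d_{m_3}=0$ or $d_{m_4}=0$. In that case \eqref{eqR2} no longer determines the mass, and instead forces the corresponding numerator to vanish as well. Take $d_{m_3}=0$, i.e.\ $b=a+r_a$, which is equivalent to $d=r_a$. Then $g_1=0$ requires $n_{m_3}=0$, hence $b=\sqrt3$ and $a=\sqrt3-2$ (the point $C_1$). At $C_1$ one has $d=2=r_b$, so $d_{m_4}=0$ as well. However, $n_{m_4}\neq0$ there, because $a\neq0$ and $r_a^3\neq8$. Thus $g_2\neq0$, and no central configuration occurs. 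The case $d_{m_4}=0$ is symmetric and leads to $A_1$, where $g_1\neq0$. Hence every convex kite central configuration has both denominators nonzero, the previous theorem applies, and the sign analysis above completes both directions of the equivalence.
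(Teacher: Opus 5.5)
Your strategy is the paper's: a sign analysis of $n_{m_3},d_{m_3},n_{m_4},d_{m_4}$ under $a<0<b$. The core sign analysis is correct. Your explicit exclusion of the two ``both positive'' sign patterns fills in a point the paper's proof passes over.

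However, two supporting steps are wrong as written, because you have swapped the two boundary curves. The curve $b=a+\sqrt{a^2+1}$ (where $d_{m_3}=0$) runs from $A_1$ to $B_1$. The curve $a=b-\sqrt{b^2+1}$ (where $d_{m_4}=0$) runs from $B_1$ to $C_1$.

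\textbf{The automatic bounds.} The function $a+\sqrt{a^2+1}$ equals $\sqrt3$ at $a=\sqrt3/3$, not at $a=\sqrt3-2$, where it is $\sqrt3-2+\sqrt6-\sqrt2\approx0.77$. Also $b-\sqrt{b^2+1}>-1$ for every $b>0$, so it never equals $-\sqrt3$. In fact the $m_3$-condition alone does not force $a<\sqrt3-2$: the point $(a,b)=(-0.1,1.5)$ satisfies $a+\sqrt{a^2+1}<b<\sqrt3$. The extra bounds are automatic only for the intersection of the two conditions:
\begin{itemize}
\item $a<b-\sqrt{b^2+1}<\sqrt3-2$, using $b<\sqrt3$ and the monotonicity of $b\mapsto b-\sqrt{b^2+1}$;
\item $b>a+\sqrt{a^2+1}>2-\sqrt3$, using $a>-\sqrt3$ and the monotonicity of $a\mapsto a+\sqrt{a^2+1}$.
\end{itemize}

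\textbf{The vanishing denominators.} With $b>0$, the conditions $d_{m_3}=n_{m_3}=0$ force $b=\sqrt3$ and $a+\sqrt{a^2+1}=\sqrt3$, i.e.\ $(a,b)=(\sqrt3/3,\sqrt3)$. This is the concave exceptional point $E$, which $a<0$ excludes. The point $C_1$ does not lie on $d_{m_3}=0$ at all: there $d=2=r_b$, while $r_a=\sqrt6-\sqrt2$.

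Likewise, with $a<0$, the conditions $d_{m_4}=n_{m_4}=0$ force $a=-\sqrt3$ and then $b=-\sqrt3/3<0$, so this case is empty as well. Note that $A_1$ lies on $d_{m_3}=0$, not on $d_{m_4}=0$.

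Both facts follow immediately from inequalities you already proved for the ``both positive'' cases: $a+\sqrt{a^2+1}<1$ for $a<0$, and $b+\sqrt3>\sqrt{b^2+1}$.

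With these two corrections your argument is complete. It is then more careful than the paper's, which neither rules out the ``both positive'' sign patterns nor addresses vanishing denominators.
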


\begin{proof}
   A kite configuration $q$ satisfying \eqref{KiteCon} is a convex kite central configuration for masses $m = [1,1,m_3,m_4]$ if and only if $m_3$ and $m_4$ are given by equation \eqref{eqm34} and are positive. Notice that $a<0<b$ for convex configuration. It is easy to know that from equation \eqref{eqnm3}, the numerator of $m_3$ is positive for $b>\sqrt{3}$ and it is negative for $0<b< \sqrt{3}$. From equation \eqref{eqdm3}, the denominator of $m_3$ is positive for $b<a+\sqrt{a^2+1}$ and it is negative for $b>a+\sqrt{a^2+1}$. Thus  in order to have $m_3>0$,  $a+\sqrt{a^2+1}<b<\sqrt{3}$. \\
   On the other hand, from equation \eqref{eqnm4}, the numerator of $m_4$ is positive for $-\sqrt{3}<a<0$ and it is negative for $a< -\sqrt{3}$. From equation \eqref{eqdm4}, the denominator of $m_4$ is positive for $a<b-\sqrt{b^2+1}$ and it is negative for $a>b-\sqrt{b^2+1}$. Thus  in order to have $m_4>0$,  $-\sqrt{3}<a<b-\sqrt{b^2+1}$. \\
\end{proof}

\begin{theorem}[\textbf{Nondegeneracy of convex kite  central configurations in restricted subspace}]\label{nondegrestricted}
For any  convex kite central configurations with $(a,b)\in \mathcal{R}_X$, we have $\det(Jac(G))\not=0$, i.e., they are nondegenerate in the restricted subspace.  
\end{theorem}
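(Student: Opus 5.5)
The plan is to reduce the statement to showing that the function $F_{red}(a,b)$ of \eqref{Dab_reduced} has no zero in $\mathcal{R}_X$. On $\mathcal{R}_X$ the denominators $d_{m_3}$ and $d_{m_4}$ are nonzero: the proof of the preceding admissibility theorem shows they have fixed (negative and positive, respectively) signs there. Hence $\det(Jac(G))\neq 0$ is equivalent to $F_{red}(a,b)\neq 0$.

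First I will simplify $F_{red}$ symbolically. I will write $n_{m_3},d_{m_3},n_{m_4},d_{m_4}$ in terms of $r_a,r_b,d$ as in the Remark. I will differentiate using $\partial r_a/\partial a=a/r_a$ and $\partial d/\partial a=-1$. Then I will factor out every manifestly sign-definite factor, such as powers of $r_a$, $r_b$, $d$ and the factors $(r_a^3-d^3)$ and $(r_b^3-d^3)$. This should leave a reduced expression $\widetilde F(a,b)$ with no small denominators. I expect a sizeable power of $d$ and of $r_a r_b$ to split off, since $n_{m_3}$ and $n_{m_4}$ both carry $d^2$.

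Next I will show $\widetilde F$ has constant sign on the closure of $\mathcal{R}_X$, or at least on $\mathcal{R}_X$. I expect this sign to be determined by evaluation at an interior point such as $(a,b)=(-0.8,0.8)$. The approach is a rigorous interval-arithmetic subdivision. I cover the bounding box $[-\sqrt3,\sqrt3-2]\times[2-\sqrt3,\sqrt3]$ by rectangles. I discard any rectangle lying entirely outside $\mathcal{R}_X$, tested by interval evaluation of $b-a-\sqrt{a^2+1}$, $a-b+\sqrt{b^2+1}$, etc. On each remaining rectangle I evaluate $\widetilde F$ in interval arithmetic (or a mean-value form using the gradient), and I bisect until the enclosure excludes $0$.

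The main obstacle is the boundary of $\mathcal{R}_X$, especially the corners $A_1,B_1,C_1,D_1$. There the interval enclosures may straddle zero, either because $\widetilde F$ itself tends to $0$ or because the rectangles are cut by curved boundaries. I will try the following three steps in order.
\begin{itemize}
\item Along each boundary curve, namely $b=\sqrt3$ ($n_{m_3}=0$), $a=-\sqrt3$ ($n_{m_4}=0$), and $d_{m_3}=0$, $d_{m_4}=0$, I will substitute the curve into $\widetilde F$ and check analytically or by one-dimensional intervals that the restriction is nonzero, except possibly at corners.
\item Near the corners I will use local coordinates adapted to the boundary and extract the leading-order term. For example, at $B_1$, where $d=r_a=r_b$, I expect to divide out a vanishing factor and show the quotient stays signed.
\item If a true zero of $\widetilde F$ sits on the boundary, I will show that $\nabla\widetilde F$ points into the region with the right sign, using the mean-value form. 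That suffices to exclude zeros in the open set.
\end{itemize}
Combining the interior and boundary verifications gives $F_{red}\neq 0$, hence $\det(Jac(G))\neq0$, on all of $\mathcal{R}_X$.
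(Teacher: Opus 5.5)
Your plan is essentially the paper's proof: reduce to $F_{red}\neq 0$ using $d_{m_3}d_{m_4}\neq 0$ on $\mathcal{R}_X$, then certify the sign by rigorous interval arithmetic over a subdivision of the bounding box $[-1.74,-0.26]\times[0.26,1.74]$. The paper evaluates $F_{red}$ directly, since it already has no denominators, on a uniform $300\times 300$ grid of the whole box and reports $F_{red}>155.09$ everywhere; this makes your region-membership tests, extra factoring and boundary/corner contingencies unnecessary (and note that $r_a^3-d^3$ and $r_b^3-d^3$ are not actually factors of $F_{red}$, which is nonzero at $B_1$).
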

\begin{proof}
The proof is based on the interval algorithm. For any  convex kite central configurations with $(a,b)\in \mathcal{R}_X$, $d_{m_3}\not=0$ and $d_{m_4}\not=0.$ $\det(Jac(G)=0$ is equivalent to $$d_{m_3} d_{m_4} \det(Jac(G))=0,$$ which is in turn equivalent to  $F_{red}(a,b)= 0$ given in \eqref{Dab_reduced}.
Since $(a,b)\in \mathcal{R}_X\subset [-\sqrt{3},\sqrt{3}-2]\times[2-\sqrt{3},\sqrt{3}] \subset[-1.74,-0.26]\times[0.26, 1.74], $ we employ rigorous interval arithmetic to prove $F_{red}(a,b)>0$ for any $(a,b)\in [-1.74,-0.26]\times[0.26, 1.74].$ \\
The rectangle $[-1.74,-0.26]\times[0.26, 1.74]$ is divided into $300\time 300$ sub-rectangles $A_k\times B_j =[a_k,a_k+\delta]\times [b_j,b_j+\delta]$. On each sub-rectangle, we evaluate the interval extension of $F_{red}(A_k,$ $B_j)$ using SageMath's arbitrary-precision interval arithmetic package. Since interval arithmetic satisfies the inclusion property, the computed interval $F_{red}(A_k,B_j)$ contains all values $F_{red}(a,b)$ for all $a\in A_k, b\in B_j$. The computed intervals contains no zeros. In fact, the smallest value of $F_{red}$ among all the sub-rectangles is $155.09$. Therefore $\det(Jac(G))>0$ for all $(a,b)\in \mathcal{R}_X$. Since the reduced Jacobian is nonsingular, every convex kite central configuration in  $\mathcal{R}_X$ is nondegenerate in the restricted configuration space.
\end{proof}


\begin{theorem}[\textbf{Nondegeneracy of convex kite central configurations in full space}]\label{nondegfull}
For every convex kite central configuration with  $(a,b)\in\mathcal{R}_X,$  one has $\det(J_2)\neq 0.$ 
Consequently, every convex kite central configuration in $\mathcal{R}_X$ is nondegenerate in the full configuration space.
\end{theorem}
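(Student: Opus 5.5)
The plan is to reduce the statement to two sign-definiteness claims, one for each factor of $\det(J_2)$ in Theorem \ref{Prop:J2Entries}, and to certify both by the interval method used for Theorem \ref{nondegrestricted}. By \eqref{detJ2} we have $\det(J_2)=(a_{11}a_{33}-a_{13}a_{31})(a_{22}a_{44}-a_{24}a_{42})$. By the closing remark of Section 3, on the admissible region these factors vanish exactly when $F_{J_{2h}}(a,b)$ and $F_{J_{2v}}(a,b)$ vanish, respectively. This uses that the clearing factors $C_{factorH}$ and $C_{factorV}$ do not vanish on $\mathcal{R}_X$. That nonvanishing I would check first. For $(a,b)\in\mathcal{R}_X$ we have $d=b-a>0$ and $r_a,r_b\ge 1$. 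Moreover $d^3-r_a^3\neq0$ is equivalent to $d_{m_3}\neq0$, i.e.\ to $b\neq a+\sqrt{a^2+1}$. Likewise $d^3-r_b^3\neq 0$ is equivalent to $d_{m_4}\neq0$, i.e.\ to $a\neq b-\sqrt{b^2+1}$. Both inequalities hold strictly in the open region, since these curves form part of its boundary.

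The core step is to show $F_{J_{2h}}\neq0$ and $F_{J_{2v}}\neq0$ on $\mathcal{R}_X$. As in the proof of Theorem \ref{nondegrestricted}, I would enclose $\mathcal{R}_X$ in the box $[-1.74,-0.26]\times[0.26,1.74]$ and subdivide it into a uniform grid of $\delta$-subrectangles $A_k\times B_j$. On each cell I would evaluate the interval extensions of the polynomial-like expressions $\hat a_{ij}$, and from them $F_{J_{2h}}(A_k,B_j)=\hat a_{11}\hat a_{33}-\hat a_{31}\hat a_{13}$ and $F_{J_{2v}}(A_k,B_j)=\hat a_{22}\hat a_{44}-\hat a_{24}\hat a_{42}$, using SageMath's rigorous interval arithmetic. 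The goal is a certified strict sign for each function on every cell, positive or negative, with one sign throughout. The inclusion property then gives the conclusion.

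The main obstacle is that a sign claim on the full box may be false. The functions $F_{J_{2h}}$ and $F_{J_{2v}}$ carry the cleared denominators. In particular $C_{factorH}$ and $C_{factorV}$ vanish on the boundary curves, and the box contains points outside $\mathcal{R}_X$ where $m_3$ or $m_4$ is nonpositive and where the factors may genuinely vanish. If the uniform-box test fails, I would first discard every cell that lies certifiably outside $\mathcal{R}_X$. A cell is discarded when interval evaluation shows $b-a-\sqrt{a^2+1}<0$ on it, or $b-\sqrt{b^2+1}-a<0$ on it, or $a<-\sqrt{3}$, or $b>\sqrt3$ (one condition suffices). I would then adaptively bisect the remaining cells where the enclosure contains zero.

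Near the boundary, and especially near the corners $A_1,B_1,C_1,D_1$, the functions may tend to zero. There I would divide out the explicit vanishing factor: where $d^3-r_a^3\to0$ I would factor $F_{J_{2h}}$ and $F_{J_{2v}}$ as $(d^3-r_a^3)$ times a remainder, and do the same for $d^3-r_b^3$. Alternatively I would use the original $a_{ij}$ with $m_3,m_4$ substituted. I would then prove the quotient has a fixed sign on a thin strip along that boundary. That would give the needed strict sign up to the open boundary, hence $\det(J_2)\neq0$ throughout $\mathcal{R}_X$.
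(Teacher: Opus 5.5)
Your plan is essentially the paper's proof. It reduces via \eqref{detJ2}, using that $C_{factorH}$ and $C_{factorV}$ are nonzero on $\mathcal{R}_X$ (equivalently $d_{m_3},d_{m_4}\neq 0$), to showing $F_{J_{2h}}\neq0$ and $F_{J_{2v}}\neq0$; these are then certified on $[-1.74,-0.26]\times[0.26,1.74]$ by outward-rounded interval evaluation, discarding boxes certifiably outside $\overline{\mathcal{R}_X}$ and bisecting recursively. Your boundary fallback is never triggered and could not be carried out as written: the zeros of the clearing factors on the boundary curves are compensated by the blow-up of $m_3$ or $m_4$ in the $a_{ij}$, so $F_{J_{2h}},F_{J_{2v}}$ do not vanish there and $d^3-r_a^3$ is not a divisible factor; the paper certifies $F_{J_{2h}}>23.57$ and $F_{J_{2v}}>229.69$ on every box meeting $\overline{\mathcal{R}_X}$, so the bisection simply terminates.
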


\begin{proof}
From \eqref{detJ2}, we have
\[\det(J_2)=\left(a_{11}a_{33}-a_{31}a_{13}\right)\left(a_{22}a_{44}-a_{42}a_{24}\right).
\]
Therefore, it is sufficient to show that neither factor vanishes in
$\mathcal{R}_X$.

By the definition of $\mathcal{R}_X$, one can verify that the transformations \eqref{Fj2h} and \eqref{Fj2v}
preserve the zero sets in the admissible region.  Since $r_a>0, r_b>0$ and $d>0,$  we obtain $C_{factorH}>0$
and $C_{factorV}>0$ throughout $\mathcal{R}_X$. In particular, $dm_3=4r_b^3\left(r_a^3-d^3\right)\neq 0$
and $dm_4=-4r_a^3\left(r_b^3-d^3\right)\neq 0.$

Consequently,
\[a_{11}a_{33}-a_{31}a_{13}=0\quad\Longleftrightarrow\quad F_{J_{2h}}(a,b)=(a_{11}a_{33}-a_{31}a_{13})C_{factorH}=0,\]
and
\[a_{22}a_{44}-a_{42}a_{24}=0 \quad\Longleftrightarrow\quad F_{J_{2v}}(a,b)=(a_{22}a_{44}-a_{42}a_{24})C_{factorV}=0 \]
for every $(a,b)\in\mathcal{R}_X$. It remains to prove that $F_{J_{2h}}(a,b)\neq 0 $ and  $F_{J_{2v}}(a,b)\neq 0$
throughout $\mathcal{R}_X$.

For this purpose, we use rigorous interval arithmetic. The admissible region
$\mathcal{R}_X$ is contained in the rectangle
\[\mathcal{B}=[-1.74,-0.26]\times[0.26,1.74].\]
We subdivide $\mathcal{B}$ into finitely many rectangular interval boxes $X=A\times B.$ 
For each box, we first use interval arithmetic to determine whether it can
intersect the closure $\overline{\mathcal{R}_X}$. More precisely, a box is
discarded whenever one of the inequalities defining
$\overline{\mathcal{R}_X}$ is rigorously violated throughout the box.

For every remaining box $X=A\times B$, we evaluate
$F_{J_{2h}}$ and $F_{J_{2v}}$ using outward-rounded real interval arithmetic.
If  $ 0\notin F_{J_{2h}}(X) $ and  $0\notin F_{J_{2v}}(X),$  then neither function can vanish at any point of  $ X\cap\overline{\mathcal{R}_X},$  and the box is certified.

If zero belongs to either interval evaluation, the box is bisected and the
same procedure is applied recursively to the resulting subboxes. The
computation terminates after finitely many subdivisions, with every box that
can intersect $\overline{\mathcal{R}_X}$ rigorously satisfying  $0\notin F_{J_{2h}}(X)$  and  $ 0\notin F_{J_{2v}}(X).$ 
 In our program, 898 boxes are checked. The smallest certified positive lower bounds are $F_{J_{2h}}> 23.57$
and $F_{J_{2v}}>229.69.$

Therefore, $\det(J_2)\neq 0$ throughout $\mathcal{R}_X$. Thus every convex kite central configuration in $\mathcal{R}_X$ is
nondegenerate in the full configuration space.
\end{proof}









\section{Concave Kite Central Configurations}
In this section, we investigate the concave kite central configurations with the configuration given in equation \eqref{KiteCon} and $0<a<b.$ Positivity of $m_3$ and $m_4$ gives us the following admissible region for concave kite central configurations. 

\begin{definition}
 Let $\mathcal{R}_{v1}$ denote the open admissible region bounded by the vertical line 
$a = 0$ and horizontal line $b = \sqrt{3}$, together with the blue curve
$ b = a + \sqrt{a^{2}+1}$ 
as illustrated in the right panel of Figure~\ref{fig2}.
$$\mathcal{R}_{v1}=\{ (a,b)|0<a<\frac{\sqrt{3}}{3}, a + \sqrt{a^{2}+1}<b<\sqrt{3} \}.$$
Let $\mathcal{R}_{v2}$ denote the open admissible region bounded by the vertical line 
$a = \sqrt{3}$ and horizontal line $b = \sqrt{3}$, together with the blue curve
$ b = a + \sqrt{a^{2}+1} $
as illustrated in the right panel of Figure~\ref{fig2}.
$$\mathcal{R}_{v2}=\{ (a,b)|\frac{\sqrt{3}}{3}<a<\sqrt{3}, \sqrt{3}<b< a + \sqrt{a^{2}+1} \}.$$

The intersection points of these boundary curves are $A_2=[0,  1],$  $B_2=[\sqrt{3},$ $ \sqrt{3}]$,
     $C_2=[\sqrt{3}, 2+\sqrt{3}]$,  $D_2=[0,\sqrt{3}],$ and $E=(\frac{\sqrt{3}}{3},\sqrt{3}).$
\end{definition}

\begin{theorem}
Assume that the kite configuration $q$ defined by \eqref{KiteCon} is a concave central configuration for the mass vector $m=[1,1,m_3,m_4],$ with $m_3>0$ and $m_4>0$. Then $(a,b)\in \mathcal{R}_{v1}\cup \mathcal{R}_{v2}\cup\{E\}.$ 
Moreover, at the exceptional point $E=\left(\frac{\sqrt{3}}{3},\sqrt{3}\right),$ 
the configuration $q$ is a central configuration with $m_4=1$ and with $m_3$ an arbitrary positive value.
\end{theorem}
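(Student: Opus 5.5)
We follow the convex-case argument and read off the signs of the four factors $n_{m_3},d_{m_3},n_{m_4},d_{m_4}$ from \eqref{eqnm3}--\eqref{eqdm4}. The one new feature is that the denominators can vanish simultaneously with the numerators, and this produces the exceptional point $E$. For $0<a<b$ we have $a>0$, $b>0$ and $(a-b)^2>0$. Hence $\operatorname{sign} n_{m_3}=\operatorname{sign}(b-\sqrt3)$ and $\operatorname{sign} n_{m_4}=\operatorname{sign}(a-\sqrt3)$. Writing $d=b-a>0$, we get $d_{m_3}=4r_b^3(r_a^3-d^3)$, which has the sign of $\sqrt{a^2+1}-(b-a)$. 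That is, $d_{m_3}>0$ iff $b<a+\sqrt{a^2+1}$. Similarly $d_{m_4}=4r_a^3(d^3-r_b^3)$ has the sign of $(b-a)-\sqrt{b^2+1}$. Since $b-a<b<\sqrt{b^2+1}$, we get $d_{m_4}<0$ throughout the concave region, so $d_{m_4}$ never vanishes there.

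Next, $m_4>0$ in \eqref{eqR} forces $n_{m_4}<0$, i.e.\ $a<\sqrt3$. Here we must use the original system \eqref{eqR} rather than \eqref{eqm34}, since $d_{m_3}$ may vanish. The second equation of \eqref{eqR} determines $m_4$ uniquely, because its coefficient $-d_{m_4}/(4r_a^3r_b^3d^2)$ is nonzero, and it gives $m_4>0$ exactly when $a<\sqrt3$. The first equation reads $d_{m_3}m_3=n_{m_3}$ up to a positive factor. If $d_{m_3}\neq0$, then $m_3>0$ requires $n_{m_3}$ and $d_{m_3}$ to have the same nonzero sign. One case is $b>\sqrt3$ and $b<a+\sqrt{a^2+1}$. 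The curve $b=a+\sqrt{a^2+1}$ is increasing and meets $b=\sqrt3$ at $a=\sqrt3/3$, so this case forces $a>\sqrt3/3$ and, together with $a<\sqrt3$, gives $\mathcal R_{v2}$. The other case is $b<\sqrt3$ and $b>a+\sqrt{a^2+1}$, which forces $a<\sqrt3/3$ and gives $\mathcal R_{v1}$. Note that $b>a+\sqrt{a^2+1}>a$ automatically in $\mathcal R_{v1}$, and in $\mathcal R_{v2}$ we have $b>\sqrt3>a$.

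If $d_{m_3}=0$, i.e.\ $b=a+\sqrt{a^2+1}$, then the first equation becomes $n_{m_3}=0$ for every $m_3$, so consistency forces $b=\sqrt3$. Then $a+\sqrt{a^2+1}=\sqrt3$ gives $a=\sqrt3/3$, which is the point $E$, and any $m_3>0$ is allowed. This is the only delicate step: we must not divide by $d_{m_3}$. We must also rule out the boundary cases $n_{m_3}=0\neq d_{m_3}$, which would give $m_3=0$, and $n_{m_4}=0$, which would give $m_4=0$.

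Finally, at $E$ we compute $m_4$ directly. We have $r_a^2=4/3$, so $r_a^3=8/(3\sqrt3)$ and $r_a^3-8=8(1-3\sqrt3)/(3\sqrt3)$. Also $r_b=2$, so $r_b^3=8$, and $d=2\sqrt3/3$, so $d^3=8/(3\sqrt3)=r_a^3$. Substituting into $m_4=n_{m_4}/d_{m_4}$ with $d_{m_4}=4r_a^3(d^3-8)$ gives
\[
m_4=\frac{8a d^2(r_a^3-8)}{4r_a^3(r_a^3-8)}=\frac{2ad^2}{r_a^3}.
\]
With $a=\sqrt3/3$ and $d^2=4/3$ this is $\frac{2\cdot(\sqrt3/3)\cdot(4/3)}{8/(3\sqrt3)}=1$. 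Geometrically, $E$ is the equilateral triangle $q_1,q_2,q_4$ with $q_3$ at its centroid, which confirms the answer.
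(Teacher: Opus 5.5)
Your argument is correct and is essentially the paper's proof: a sign analysis of $n_{m_3},d_{m_3},n_{m_4},d_{m_4}$ on $0<a<b$, where the exceptional point $E$ arises from $d_{m_3}=n_{m_3}=0$ and $m_4=n_{m_4}/d_{m_4}=1$. You supply details the paper leaves implicit: that $d_{m_4}<0$ on the whole concave region, the monotonicity of $a+\sqrt{a^2+1}$ placing the corner at $a=\sqrt{3}/3$, not dividing by $d_{m_3}$, and the explicit evaluation $m_4=2ad^2/r_a^3=1$ at $E$.
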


\begin{proof}
The first conclusion follows directly from the sign analysis of $n_{m_3}, d_{m_3}, n_{m_4}, d_{m_4}$  in \eqref{eqR2}. Indeed, the conditions $m_3>0$ and $m_4>0$ imply that  $(a,b)$  must belong to $\mathcal{R}_{v1}\cup \mathcal{R}_{v2}\cup{E}.$

At the point  $E=\left(\frac{\sqrt{3}}{3},\sqrt{3}\right),$  we have $d_{m_4}(E)=n_{m_4}(E)\neq 0.$
Therefore, the second equation in \eqref{eqR2} gives $m_4=1.$ 
On the other hand, $d_{m_3}(E)=n_{m_3}(E)=0.$  Hence, the first equation in \eqref{eqR2} is identically satisfied at $E$, regardless of the value of $m_3$. Consequently, every $m_3>0$, together with $m_4=1$, gives a central configuration at $E$.
Geometrically, at $E$, the three bodies with masses $m_1=m_2=m_4=1$ form an equilateral triangle, while the body of mass $m_3$ is located at its center. Thus, the central-configuration equations hold for every positive value of $m_3$.

Although the quotient formula for $m_3$ in \eqref{eqm34} is undefined at $E$, its limiting value depends on the direction of approach. More precisely, along the path  $(a,b)=\left( \frac{\sqrt{3}}{3}+\varepsilon, \sqrt{3}+k\varepsilon\right),$ as $ \varepsilon\to 0,$ we have
\begin{equation}\label{limit_m3}
    \lim_{\varepsilon\to 0} m_3(a,b) =\frac{\sqrt{3}k}{9-6k}.
\end{equation} 
As $k$ varies from $0$ to $\frac{3}{2}$, this limit varies from $0$ to  $+\infty$. This is consistent with the fact that, at $E$, $m_3$ may be chosen arbitrarily in  $(0,\infty)$ .
\end{proof}

\begin{theorem}
     Assume that the kite configuration $q$ defined by \eqref{KiteCon} is a concave kite  central configuration for mass vector $m=[1,1, m_3, m_4]$ with $m_3>0$ and $m_4>0$. Then there exists a constant $m_{4\max}>0$ such that $m_4\leq m_{4{\max}}.$
\end{theorem}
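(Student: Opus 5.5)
The plan is to bound $m_4=n_{m_4}/d_{m_4}$ directly, using the explicit formulas \eqref{eqnm4}, \eqref{eqdm4} and the boundedness of the concave admissible set. By the preceding theorem, every concave kite central configuration with positive masses has $(a,b)\in\mathcal{R}_{v1}\cup\mathcal{R}_{v2}\cup\{E\}$. At $E$ we already know $m_4=1$, so it suffices to bound $m_4$ on $\mathcal{R}_{v1}\cup\mathcal{R}_{v2}$. There $0<a<\sqrt3$ and $\sqrt3\le b\le a+\sqrt{a^2+1}< 2+\sqrt3$ (or $b<\sqrt3$ in $\mathcal{R}_{v1}$). Hence $a$, $b$ and $d=b-a\in(0,b)$ all lie in a fixed compact range.

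The first step is to rewrite the quotient with signs made explicit. Since $0<a<\sqrt3$ we have $r_a^3<8$. A short computation gives
\[
r_b^2-d^2=1+2ab-a^2>1+a^2>0 ,
\]
because $b>a>0$; hence $r_b>d$. Therefore
\[
m_4=\frac{a\,d^2\,r_b^3\,(8-r_a^3)}{4\,r_a^3\,(r_b^3-d^3)},
\]
with every factor positive.

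The only possible source of unboundedness is the factor $r_b^3-d^3$ in the denominator, so the key step is a lower bound for it that stays away from zero uniformly. Factor
\[
r_b^3-d^3=(r_b-d)(r_b^2+r_bd+d^2)\ge (r_b-d)\,r_b^2 .
\]
Then use
\[
r_b-d=\frac{r_b^2-d^2}{r_b+d}\ge\frac{1+a^2}{2r_b},
\]
which holds since $d<b<r_b$. Together these give $r_b^3-d^3\ge \tfrac12 r_b(1+a^2)$.

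Substituting this bound yields
\[
m_4\le \frac{a\,d^2\,r_b^2\,(8-r_a^3)}{2\,r_a^5}\le 4\,a\,b^2(1+b^2),
\]
using $r_a\ge1$, $8-r_a^3<8$ and $d<b$. With $a<\sqrt3$ and $b<2+\sqrt3$, the right-hand side is bounded by an explicit constant. Taking the maximum of this constant and $1$ (to cover the point $E$) gives $m_{4\max}$.

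I expect the only delicate point to be the denominator estimate, that is, excluding $d_{m_4}\to0$ near the boundary of the admissible region. The identity $r_b^2-d^2=1+2ab-a^2$ handles this cleanly in the concave case. If a sharp value of $m_{4\max}$ is wanted rather than mere existence, one can instead maximize the continuous function $m_4(a,b)$ over the compact closure with rigorous interval arithmetic, as in the proof of Theorem~\ref{nondegfull}. This is legitimate because the bound above shows $m_4$ extends continuously to the closure, with the limit at $E$ equal to $1$.
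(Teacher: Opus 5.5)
Your proof is correct, and it follows a different, more quantitative route than the paper. The paper argues by compactness. It asserts that $n_{m_4}\le 0$ and that $d_{m_4}<0$ never vanishes on the closed set $\overline{\mathcal{R}_{v1}\cup\mathcal{R}_{v2}}$. It concludes that $m_4=n_{m_4}/d_{m_4}$ is continuous on a compact set, so the extreme value theorem gives an attained maximum, which the subsequent remark locates numerically ($m_{4\max}\approx 1.4689$ on $b=\sqrt3$).

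You instead bound the quotient explicitly. The identity $r_b^2-d^2=1+2ab-a^2>1+a^2$ gives the uniform denominator estimate $r_b^3-d^3\ge \tfrac12\, r_b r_a^2$. From this, $m_4\le 4ab^2(1+b^2)$, and the restrictions $a<\sqrt3$, $b<2+\sqrt3$ coming from the admissible-region theorem make the right-hand side an explicit constant.

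What your route buys:
\begin{itemize}
\item It actually proves that $d_{m_4}$ stays away from zero. The paper's argument needs this on the closure but only states it.
\item It yields a concrete constant without invoking compactness.
\end{itemize}

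What the paper's route buys: its extreme-value formulation identifies $m_{4\max}$ as a sharp, attained maximum. Your constant, roughly $1.4\times 10^3$, is far from sharp.

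Two small remarks. First, your separate treatment of $E$ is harmless but unnecessary, since $d_{m_4}(E)\neq 0$ and your inequality chain applies there directly. Second, in your closing aside, the continuous extension of $m_4$ to the closure comes from the lower bound on $r_b^3-d^3$. The upper bound on $m_4$ alone does not give it.
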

\begin{proof}
  By the expressions of $n_{m_4}$ and $d_{m_4}$ in \eqref{eqnm4} and \eqref{eqdm4}, respectively, we have $n_{m_4}\leq 0$ and $d_{m_4}<0$  for every $(a,b)\in \overline{\mathcal{R}{v_1}\cup \mathcal{R}{v_2}}.$ Moreover, $d_{m_4}$ never vanishes on this closed region. Hence  $m_4=\frac{n_{m_4}}{d_{m_4}}$ is a continuous function on the compact set $\overline{\mathcal{R}{v_1}\cup \mathcal{R}{v_2}}$. By the Extreme Value Theorem, $m_4$ attains both its  minimum and  maximum on this set. Since $n_{m_4}\le0$ and $d_{m_4}<0$, we have $m_4\ge0.$ Furthermore, $n_{m_4}=0$ on the boundary, so the minimum value is $0$. Let
$$m_{4{\max}}= \max_{(a,b)\in\overline{\mathcal{R}_{v1}\cup\mathcal{R}_{v2}}}m_4(a,b).$$  Then every concave kite central configuration satisfies $ 0<m_4\le m_{4{\max}},$  which completes the proof.
\end{proof}
 \begin{figure}[ht]
    \centering
       \includegraphics[width=0.6\textwidth,height=3in]{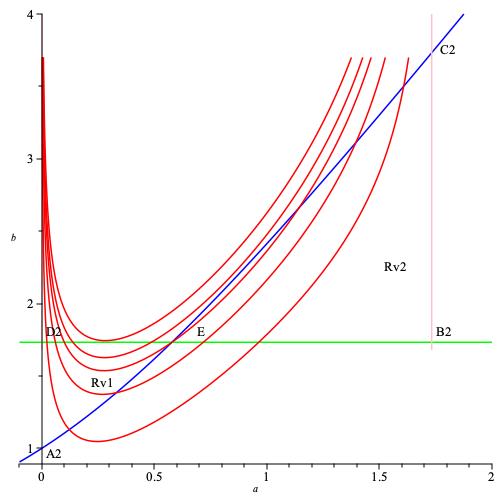}
     \caption{Boundedness of $m_4$ in the region $\overline{\mathcal{R}{v_1}\cup \mathcal{R}{v_2}}$. $0\leq m_4\leq m_{4\max}.$ Curves from bottom to top corresponding to $m_4=0.3, 0.7, 1, 1.2, 1.5$. } 
     \label{fig4}
 \end{figure}
\begin{remark}
    Numerical optimization shows that $m_4$ attains its absolute maximum on the boundary $b=\sqrt{3}$. More precisely,
$$m_{4{\max}}=  \max_{(a,b)\in \overline{\mathcal{R}_{v1}\cup\mathcal{R}_{v2}}} m_4(a,b) \approx 1.468895553993699,
$$
and the maximum is attained at   $(a,b) \approx\left(0.280570316717107,\sqrt{3}\right).$ At this point, $m_3=0.$

\end{remark}

\begin{corollary}
If the concave configuration $q$ defined in \eqref{KiteCon} satisfies $(a,b)\notin \overline{\mathcal{R}{v_1}\cup \mathcal{R}{v_2}}$, then the value of $m_3$ given by \eqref{eqm34} is negative. 
\end{corollary}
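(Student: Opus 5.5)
The plan is a direct sign analysis of $m_3=n_{m_3}/d_{m_3}$ from \eqref{eqm34}, in the spirit of the proofs of the admissibility theorems above. For $0<a<b$ we have $b>0$, $r_a^3>0$, $r_b^3>0$ and $d^2>0$. Hence
\[
\operatorname{sgn} n_{m_3}=\operatorname{sgn}(r_b^3-8)=\operatorname{sgn}(b-\sqrt3),
\qquad
\operatorname{sgn} d_{m_3}=\operatorname{sgn}(r_a^3-d^3)=\operatorname{sgn}\bigl(a+\sqrt{a^2+1}-b\bigr).
\]
The second equivalence holds because $t\mapsto t^3$ is increasing and $d=b-a>0$. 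So $m_3>0$ exactly on
\[
S_1=\{\,b<\sqrt3,\ b>a+\sqrt{a^2+1}\,\}
\qquad\text{and}\qquad
S_2=\{\,b>\sqrt3,\ b<a+\sqrt{a^2+1}\,\}.
\]
Moreover $m_3=0$ (or $m_3$ is undefined) only on the lines $b=\sqrt3$ and $b=a+\sqrt{a^2+1}$, and these are boundary curves of $\mathcal R_{v1}\cup\mathcal R_{v2}$.

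Next I would identify $S_1$ and $S_2$ with $\mathcal R_{v1}$ and $\mathcal R_{v2}$. The key monotonicity fact is that $h(a)=a+\sqrt{a^2+1}$ is strictly increasing, with $h(\sqrt3/3)=\sqrt3$. On $S_1$ we have $h(a)<b<\sqrt3=h(\sqrt3/3)$, which forces $a<\sqrt3/3$; together with $a>0$ this gives exactly $\mathcal R_{v1}$. On $S_2$ we have $h(a)>b>\sqrt3$, which forces $a>\sqrt3/3$; this reproduces $\mathcal R_{v2}$ except for the constraint $a<\sqrt3$.

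That missing constraint is the main obstacle. A point with $a\ge\sqrt3$ and $\sqrt3<b<h(a)$ has $m_3>0$ purely from the $m_3$ signs, so the $m_3$ analysis alone cannot exclude it. Here I would invoke the standing convention of this section: a concave kite configuration means one that is central for positive $m_4$, as in the preceding theorem and its proof. From \eqref{eqnm4} and \eqref{eqdm4}, $d_{m_4}=-4r_a^3(r_b^3-d^3)<0$ for $0<a<b$, since $r_b>b>d$. Also $\operatorname{sgn} n_{m_4}=\operatorname{sgn}(r_a^3-8)$. So $m_4>0$ forces $a<\sqrt3$, and the line $a=\sqrt3$ lies in the closure. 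Under this reading, every concave point with $m_3\ge0$ lies in $\overline{\mathcal R_{v1}\cup\mathcal R_{v2}}$, and the contrapositive gives $m_3<0$ off the closure. I would state this dependence on the $m_4>0$ convention explicitly in the writeup. Without it, for example at $a=2$, $b=2.5$, one gets $m_3>0$ but $m_4<0$, so the corollary as literally stated would fail there.

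Finally I would check the degenerate locus $d_{m_3}=0$ off the closure, where \eqref{eqm34} is undefined. This is the curve $b=h(a)$. Away from $E$ it has $b\ne\sqrt3$, so no finite $m_3$ solves $g_1=0$ there. The curve is a boundary curve of $\mathcal R_{v1}\cup\mathcal R_{v2}$ for $0\le a\le\sqrt3$, so it is excluded from the corollary's hypothesis. This completes the argument.
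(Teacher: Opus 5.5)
Your sign analysis of $n_{m_3}$ and $d_{m_3}$, combined with the monotonicity of $h(a)=a+\sqrt{a^2+1}$, is correct and is exactly the argument the paper relies on; the paper gives no separate proof and defers to the same sign analysis used for the concave admissible-region theorem. Your caveat is genuine and should stay in the write-up: the paper defines ``concave'' only by $0<a<b$, so the statement as written fails for $a>\sqrt3$ with $a<b<h(a)$ (at $(a,b)=(2,2.5)$ one gets $m_3\approx 0.093>0$ and $m_4\approx -0.036<0$), and the implicit hypothesis $m_4>0$, equivalently $a<\sqrt3$, is needed, as you propose.
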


\subsection{Degeneracy in Restricted Kite Symmetrical Subspace}
\begin{figure}[ht]
    \centering
    \includegraphics[width=0.48\textwidth,height=3in]{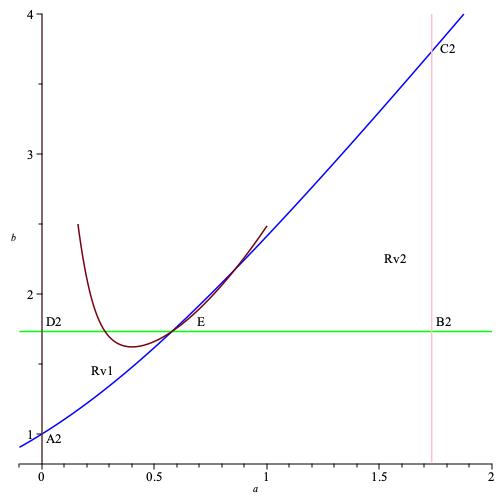}
     \includegraphics[width=0.48\textwidth,height=3in]{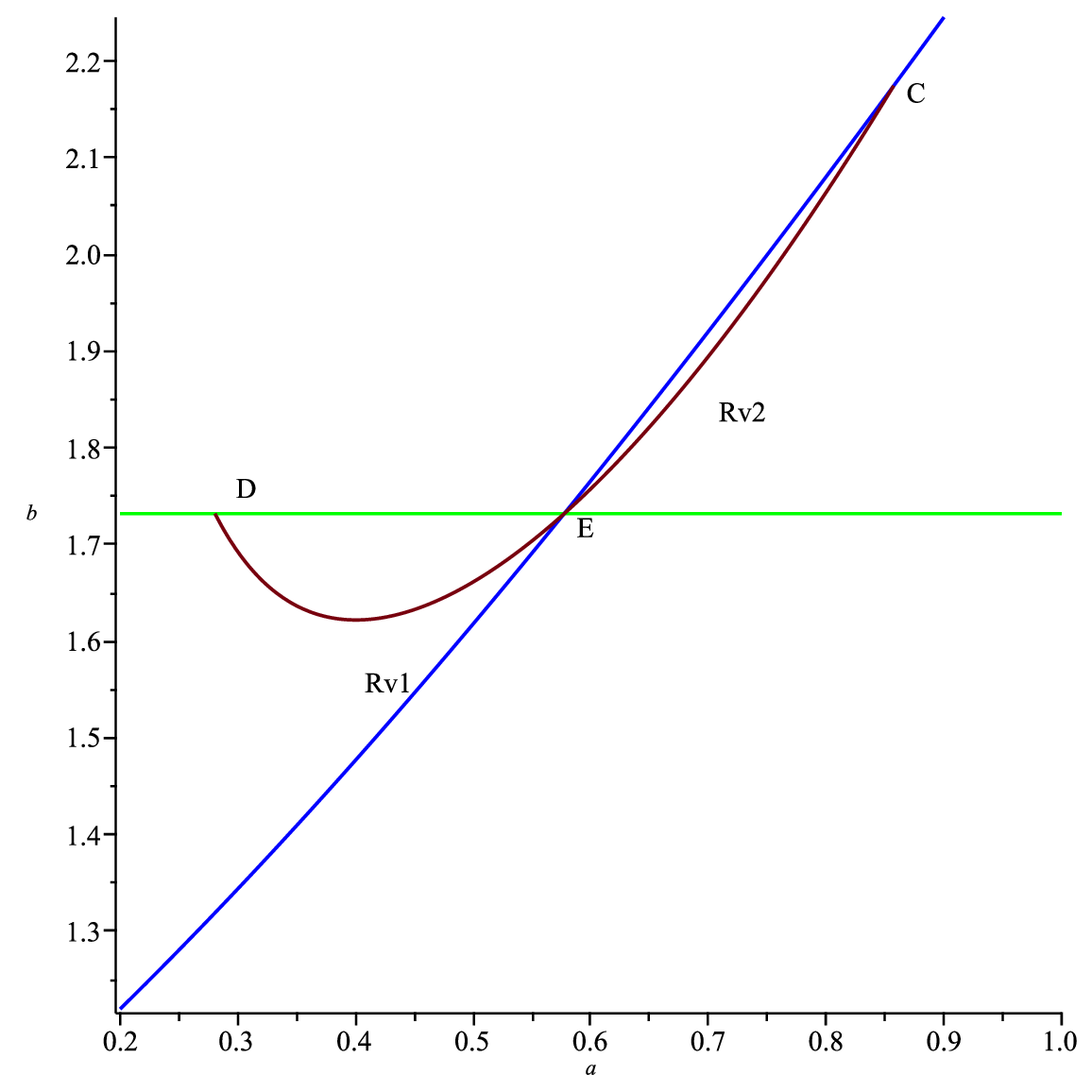}
    \caption{Degeneracy of concave kite central configuration $\det(Jac(G))=0$ in restricted subspace: red curve in region $\mathcal{R}_{v1}$ and $\mathcal{R}_{v2}$.}
    \label{fig_deg_redS}
\end{figure}


\begin{theorem}[\textbf{Degeneracy of concave kite central configurations in the restricted subspace}]\label{thm:DegReduced}
Let $F_{red}(a,b)=d_{m_3}d_{m_4}\det\bigl(\operatorname{Jac}(G(a,b))\bigr)$ be defined as in \eqref{Dab_reduced}.  Then the degeneracy set
$$\Gamma=\{(a,b):F_{red}(a,b)=0\} \subset \mathcal{R}_{v1}\cup\mathcal{R}_{v2}\cup\{E\}$$ is a unique smooth curve connecting $$D=\left(0.2805703167,\sqrt{3}\right) \quad\text{and}\quad C=\left(0.8573000157,\,2.174480078\right), $$
and passing through $E=\left(\frac{\sqrt3}{3},\sqrt3\right). $ For every point $(a,b)\in\Gamma$, the corresponding concave kite central configuration is degenerate in the restricted subspace (See Figure~\ref{fig_deg_redS}). 
\end{theorem}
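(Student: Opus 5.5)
The proof will be computer-assisted and will follow the pattern used for Theorems \ref{nondegrestricted} and \ref{nondegfull}. The new ingredient is that a zero set must now be \emph{located and shown to be a curve}, not merely excluded. The tools are rigorous interval arithmetic, the Krawczyk operator and a quantitative implicit function theorem.

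\textbf{Step 1 (working set).} Work on the compact set $K=\overline{\mathcal{R}_{v1}\cup\mathcal{R}_{v2}}\subset[0,\sqrt3]\times[1,2+\sqrt3]$. On $K$ we have $d_{m_4}<0$. The factor $d_{m_3}$ vanishes only on the blue curve $b=a+\sqrt{a^2+1}$, and this curve meets the interior of the admissible set only at $E$. Hence in $\mathcal{R}_{v1}\cup\mathcal{R}_{v2}$ the zero set of $F_{red}$ coincides with that of $\det(\operatorname{Jac}(G))$. We also check directly that $F_{red}(E)=0$ and $F_{red}(D)=0$, and that $F_{red}(C)=0$ at the blue-curve endpoint $C$. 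Using the explicit formulas for $n_{m_3},d_{m_3},n_{m_4},d_{m_4}$, each of these reduces to a one-variable equation along a boundary edge (namely $b=\sqrt3$, or $b=a+\sqrt{a^2+1}$). Each such root will be isolated rigorously by a one-dimensional interval Newton/Krawczyk test. This also explains why $D$ is the point where $m_4$ attains its maximum with $m_3=0$.

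\textbf{Step 2 (exclusion).} Subdivide $K$ adaptively into boxes. Discard every box on which interval evaluation shows it misses $K$. Discard every box on which $0\notin F_{red}(X)$. On each remaining box $X$, evaluate the interval gradient $\nabla F_{red}(X)$ and require that one component exclude $0$; we expect $\partial_b F_{red}$ to work on most boxes and $\partial_a F_{red}$ near the steep part of the curve. If $\partial_b F_{red}\neq0$ on $X=A\times B$, the quantitative implicit function theorem shows that $\Gamma\cap X$ is a single graph $b=\phi(a)$ over $A$, provided $F_{red}$ takes opposite signs on the bottom and top edges. Those sign conditions are checked by interval evaluation on the edges. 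Equivalently, one can apply the Krawczyk operator in $b$ for each sub-interval of $a$.

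\textbf{Step 3 (gluing and smoothness).} Chain the certified boxes. Adjacent boxes must share the crossing point of their common edge, and each edge crossing is verified to be unique by monotonicity. This yields a single connected $C^1$ (indeed analytic) arc, since $\nabla F_{red}\neq0$ everywhere on $\Gamma$. We then verify that $\Gamma$ has no other components, closed loops or branches: every zero of $F_{red}$ lies in a certified box. The arc starts at $D$ on $b=\sqrt3$ inside $\overline{\mathcal{R}_{v1}}$, passes through $E$ from $\mathcal{R}_{v1}$ into $\mathcal{R}_{v2}$, and ends at $C$ on the blue curve. Degeneracy at each point of $\Gamma$ then follows from the equivalence $F_{red}=0\iff\det\operatorname{Jac}(G)=0$ established in Step 1.

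\textbf{Main obstacle.} The hard part is the neighbourhood of $E$. There $d_{m_3}=n_{m_3}=0$, $m_3$ is direction-dependent by \eqref{limit_m3}, and the curve passes through the corner where $\mathcal{R}_{v1}$ and $\mathcal{R}_{v2}$ touch. Interval overestimation will likely prevent $0\notin\nabla F_{red}(X)$ on tiny boxes around $E$. To handle this, I would first compute $\nabla F_{red}(E)$ symbolically or with high-precision intervals at the point itself and check that it is nonzero. I would then bound the second derivatives of $F_{red}$ on a small explicit disk around $E$, and apply a quantitative implicit function theorem (a Kantorovich-type bound) on that disk. This patch is then glued to the box-based certification outside the disk. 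The endpoints $D$ and $C$ lie on the boundary of $K$ and need the same local treatment.
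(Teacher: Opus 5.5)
Your route is essentially the paper's: one-dimensional Krawczyk solves on $b=\sqrt3$ and on $b=a+\sqrt{a^2+1}$ to isolate $D$, $E$, $C$, then Krawczyk in $b$ over $a$-strips with the quantitative implicit function theorem, glued by uniqueness. The difference is that you run a global branch-and-bound exclusion over all of $K=\overline{\mathcal{R}_{v1}\cup\mathcal{R}_{v2}}$. The paper instead excludes extra zeros only fiberwise along the continuation. Your version would also settle $a<a_D$ and $a>a_C$, which the uniqueness part of the statement requires.

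As written, however, your exclusion cannot terminate. $K$ contains the collision corner $B_2=(\sqrt3,\sqrt3)$, where $d=b-a=0$. Since $n_{m_3}$ and $n_{m_4}$ carry the factor $d^2$, all four bracketed factors in \eqref{Dab_reduced} vanish there. Hence $F_{red}(B_2)=0$ and $\nabla F_{red}(B_2)=0$. A box containing $B_2$ can be neither discarded nor certified by your gradient test. Because $\mathcal{R}_{v2}$ accumulates at $B_2$, you also cannot simply leave a neighbourhood out. Your list of special points ($E$, $D$, $C$) misses this one.

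The fix is a separate local argument at $B_2$. Put $u=\sqrt3-a\ge0$ and $v=b-\sqrt3\ge0$, so $d=u+v\ge\max(u,v)$. From $r_b^3-8=6\sqrt3\,v\,(1+O(v))$ and $r_a^3-8=-6\sqrt3\,u\,(1+O(u))$ one gets $m_3=d^2v\,h$ and $m_4=d^2u\,k$, with $h,k$ smooth and $h(B_2)=k(B_2)=\frac9{16}$. Since $F_{red}=d_{m_3}^2d_{m_4}^2\det\frac{\partial(m_3,m_4)}{\partial(a,b)}$, a short computation gives
\[
F_{red}=d_{m_3}^2d_{m_4}^2\bigl(3hk\,d^4+O(d^5)\bigr).
\]
This is positive on $K\setminus\{B_2\}$ near $B_2$. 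A rigorous remainder bound then lets you excise a small neighbourhood of $B_2$ before running the box scheme.

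A smaller point concerns $E$. There $d_{m_3}=0$, so your Step 1 equivalence does not give degeneracy at $E$. It follows instead by continuity of $\det\operatorname{Jac}(G)$ in $(a,b,m_3,m_4)$ along $\Gamma$, with $m_3$ taken as the limiting value from \eqref{limit_m3}.
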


\begin{proof}
The proof combines interval arithmetic, the Krawczyk operator, and the quantitative implicit function theorem.

Choose an initial value $a_0.$ Applying the one-dimensional Krawczyk operator to the equation
$F_{red}(a_0,b)=0$ produces an interval $B_0=[b_0^-,b_0^+]$  containing a unique zero $b_0\in B_0.$

Moreover, interval evaluation verifies that $0\notin \frac{\partial F_{red}}{\partial b}(a_0,B_0),$  so the partial derivative with respect to $b $ is uniformly bounded away from zero on \(B_0\).

Next, interval arithmetic provides rigorous bounds for
\[ \left|\frac{\partial F_{red}}{\partial a}\right|,\qquad \left|\frac{\partial^2 F_{red}}{\partial a^2}\right|,\qquad \left|\frac{\partial^2 F_{red}}{\partial a \partial b}\right|,\qquad \left|\frac{\partial^2 F_{red}}{\partial b^2}\right| \]
on a rectangle containing \((a_0,B_0)\).
These bounds satisfy the hypotheses of the quantitative implicit function Theorem~5 in
\cite{Sun2023}. Hence there exist positive constants \(r\) and \(\varepsilon\) such that, for every
\[
a\in[a_0-\varepsilon,a_0+\varepsilon],
\]
there exists a unique solution
\[
b=g(a)\in(b_0-r,b_0+r)
\]
satisfying $F_{red}(a,b)=0.$ 
Therefore, the local zero set of \(F_{red}\) is the graph of a unique smooth function.

To continue this curve globally, we perform a validated continuation in the parameter \(a\). Suppose the solution has been verified up to \(a=a_i\). Using the enclosure of \(g(a_i)\) obtained in the previous step as the initial interval, the Krawczyk operator computes a new interval \(B_i\) containing the unique solution $b_i$ of \[ F_{red}(a_i,b_i)=0, \] and verifies that
\[ 0\notin \frac{\partial F_{red}}{\partial b}(a_i,B_i). \]
The interval bounds for the first and second derivatives are then recomputed on a neighborhood of \((a_i,B_i)\), and the hypotheses of Theorem~5 in \cite{Sun2023} are verified again. Consequently, there exists a neighborhood
\[
[a_i-\varepsilon_i,a_i+\varepsilon_i]
\]
on which the implicit solution is uniquely defined. Since consecutive neighborhoods overlap, uniqueness implies that the locally defined solution segments coincide on their intersections, producing a single smooth curve.

The continuation proceeds until it reaches the two boundary points
\[
D=\left(0.2805703167,\sqrt3\right)
\quad\text{and}\quad
C=\left(0.8573000157,\,2.174480078\right),
\]
passing through
\[
E=\left(\frac{\sqrt3}{3},\sqrt3\right).
\]
The validated continuation consists of finitely many overlapping neighborhoods, since the interval for $a$ is compact. Finally we get a unique smooth curve
\[
\Gamma=\{(a,b):F_{red}(a,b)=0\},
\]
which is precisely the connected component of the degeneracy set contained in
\[
\mathcal{R}_{v1}\cup\mathcal{R}_{v2}\cup\{E\}.
\]
The smoothness of \(\Gamma\) follows from the classical implicit function theorem because
\[
\frac{\partial F_{red}}{\partial b}(a,g(a))\neq0
\]
throughout the whole process.
\end{proof}
\begin{remark}
\begin{enumerate}
    \item Although the boundary degeneracy points $D$ and $C$ were computed by interval arithmetic and Krawczyk operator, one can solve the equation on the boundaries. For $b=\sqrt{3}$: $F_{red}(a,\sqrt{3})=0$ for $0< a< \sqrt{3}$ which has two solutions $a=0.2805703167$ and $a=\frac{\sqrt{3}}{3}.$ These give the degenerate point $D$ and $E$.
    On the boundary $b=a+\sqrt{a^2+1}$: $F_{red}(a, a+\sqrt{a^2+1})=0$ for $0< a< \sqrt{3}$ has two solutions $a=0.8573000157$ and $a=\frac{\sqrt{3}}{3}.$ These give the degenerate point $C$ and $E$.
    \item As an illustration, we present one representative example. Let $a_0=0.5.$ Applying the Krawczyk operator yields the verified solution $b_0=1.6614016692$. Using the quantitative implicit function theorem, we obtain the validated bounds 
    $$r_0=0.0059083661, \qquad \epsilon_0=0.0016780322.$$ Consequently, there exists a unique solution \(b=g(a)\) for every $a\in[a_0-\epsilon_0,a_0+\epsilon_0],$  with  $b\in[b_0-r_0,b_0+r_0].$  That is, the solution curve is uniquely contained in the rectangle
    $$[a_0-\epsilon_0,a_0+\epsilon_0] \times [b_0-r_0,b_0+r_0].$$
To verify that no additional solutions exist in the same $a$-interval, we apply interval arithmetic to the reduced equation $F_{\mathrm{red}}(a,b)$ . Fixing $a\in[a_0-\epsilon_0,a_0+\epsilon_0]$, we rigorously prove that $F_{\mathrm{red}}(a,b)\neq0$ for all $b\notin[b_0-r_0,b_0+r_0].$  Therefore, the above rectangle contains the unique zero of $F_{\mathrm{red}}$ over the prescribed interval.

\item At point $E=(\frac{\sqrt{3}}{3}, \sqrt{3})$, $r_0=0.0061607213, \epsilon_0=0.0014306898$. We further computed the derivative $\frac{db}{da}=-\frac{\frac{\partial F_{red}(a,b)}{\partial a}}{\frac{\partial F_{red}(a,b}{\partial b}}$, which is about $1.09117428169.$ 
At $E$, this gives $g'\left(\frac{\sqrt{3}}{3}\right) \approx 1.09117428169.$  Since $0<1.09117428169<\frac{3}{2},$ the tangent to the implicit curve $F_{red}(a,b)=0$ at $E$ lies strictly between the tangents to these two boundary curves. Consequently, the implicit curve $b=g(a)$  passes through $E$, entering $\mathcal{R}_{v1}$  on one side of $E$ and $\mathcal{R}_{v2}$ on the other. Moreover, At point $E$, $m_4=1$ and $m_3=0.7704869546$ by using formula \eqref{limit_m3} with $k=1.09117428169.$ This degeneracy at $E$ is the known degeneracy of four-body problem having equilateral triangle with fourth one at the center. 

\item Similarly, at point $D=(0.2805703167,\sqrt{3}),$  the rate $\frac{db}{da}\approx -2.373726621$ which indicates that the degenerate curve cross the horizontal line $b=\sqrt{3}$. This shows that the degenerate curve enters the region $\mathcal{R}_{v1}$ \\
At point $C=(0.8573000157,2.174480078)$, the rate $\frac{db}{da}\approx 1.996600507$ which is bigger than the slope ($1.650860152)$ of the tangent line to the curve $b=a+\sqrt{a^2+1}$. This shows that the degenerate curve leaves the region $\mathcal{R}_{v2}.$

\end{enumerate}
   
\end{remark}



\subsection{Degeneracy in Full Configuration Space}
Having reduced the degeneracy problem in the full configuration space to the equations 
$$F_{J_{2v}}(a,b)=0 \quad\text{and}\quad F_{J_{2h}}(a,b)=0,$$
we now apply interval arithmetic  and the quantitative implicit function theorem  to rigorously analyze their zero sets which yields computer-assisted proofs of the existence of the corresponding degeneracy curves. 

\begin{figure}[ht]
    \centering
    \includegraphics[width=0.6\textwidth,height=3in]{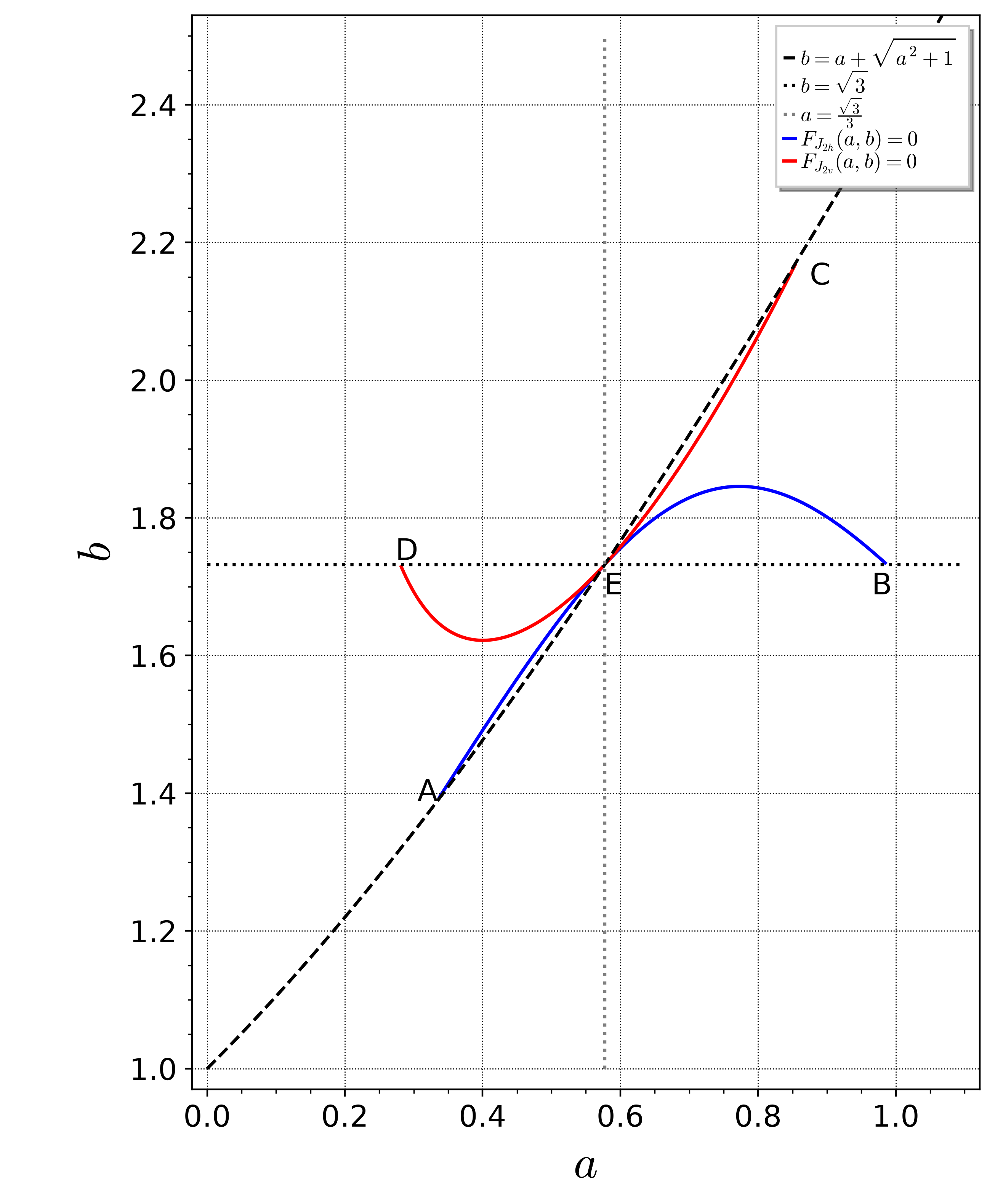}
    \caption{Degeneracy of concave central configuration $\det(J_2)=0$ in region $\mathcal{R}_{v1}$ and $\mathcal{R}_{v2}$. Red curve $DEC$ is for $F_{J_{2v}}=0$ and blue curve $AEB$ is for $F_{J_{2h}}=0.$ }
    \label{fig_Fj2h2v}
\end{figure}

\begin{theorem}[\textbf{Horizontal degeneracy curve in full space}]\label{thm:Fh}
Let $F_{J_{2h}}(a,b)=(\hat{a}_{11}\hat{a}_{33}-\hat{a}_{31}\hat{a}_{13})$ be given in equation \eqref{Fj2h}.
Then the zero set of $F_{J_{2h}}$ in the concave region $\mathcal R_{v1}\cup\mathcal R_{v2}\cup\{E\}$ 
consists of a unique connected smooth curve
 $$ \Gamma_h =\{(a,b):F_{J_{2h}}(a,b)=0\} = \{(a,b_h(a)):a_h^-< a < a_h^+\},$$
where $a_h^- \approx  0.33170680252449$ and $a_h^+ \approx 0.987397455085085$. Moreover, 

\begin{enumerate}
\item
For any $a\in(a_h^-,a_h^+)$, $\Gamma_h$ intersects the vertical line $a=\mathrm{constant}$ at exactly one point.

\item
The curve joins the two boundary components of the concave region,
with one endpoint $A= (a_h^-, a_h^-+\sqrt{{a_h^-}^2+1})$ on $b=a+\sqrt{a^2+1},$ 
and the other $B=(a_h^+,\sqrt{3})$, on $b=\sqrt3 $ (see the blue curve in Figure \ref{fig_Fj2h2v}).

\item
The curve passes through the point $E=\left(\frac{\sqrt3}{3},\sqrt3\right)$.
 \item The graph $b=b_h(a)$ is strictly concave downward: $b_h''(a)<0$ for $a\in(a_h^-,a_h^+).$ 
\item
For every $(a,b)\in\Gamma_h\setminus\{E\}$, the horizontal block of the full Jacobian has rank one, while the vertical block is nonsingular.
Consequently, the kernel of the full Jacobian is one-dimensional and lies in the horizontal subspace.
\end{enumerate}
\end{theorem}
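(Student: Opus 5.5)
The plan follows the template of the proof of Theorem~\ref{thm:DegReduced}, applied now to the polynomial-like function $F_{J_{2h}}$ of \eqref{Fj2h}, with some extra certified computations for items 3--5.

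\textbf{Step 1: the boundary endpoints.} Restrict $F_{J_{2h}}$ to the two boundary arcs $b=\sqrt3$ and $b=a+\sqrt{a^2+1}$ for $0<a<\sqrt3$. On each arc, a one-dimensional Krawczyk enclosure together with interval exclusion of zeros elsewhere will show:
\begin{itemize}
\item on $b=\sqrt3$ the zeros are exactly $a_h^+$ and $\frac{\sqrt3}{3}$;
\item on $b=a+\sqrt{a^2+1}$ the zeros are exactly $a_h^-$ and $\frac{\sqrt3}{3}$.
\end{itemize}
This yields the points $A$ and $B$. At $E$ I would verify $F_{J_{2h}}(E)=0$ symbolically. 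This is plausible because it is the Palmore-type degeneracy, where the equilateral triangle with central mass is degenerate for every $m_3$ direction of approach. Note that $F_{J_{2h}}$ is a function of $(a,b)$ only, with $m_3,m_4$ already substituted, so $E$ needs care; I would check the exact value at $E$ using $r_b=2$ and $d^3=r_a^3$-type identities.

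\textbf{Step 2: the interior curve.} Starting from a Krawczyk enclosure of $F_{J_{2h}}(a_0,b)=0$ at some $a_0\in(a_h^-,a_h^+)$, run validated continuation in $a$ using the quantitative implicit function theorem (Theorem~5 of \cite{Sun2023}). This requires interval bounds on the first and second partial derivatives and the certified condition $0\notin\partial_bF_{J_{2h}}$ along the enclosures. Continue until the enclosures hit $A$ and $B$, passing through $E$. This gives a smooth graph $b=b_h(a)$.

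To obtain uniqueness and item 1, I would then cover the closure of $\mathcal R_{v1}\cup\mathcal R_{v2}$, intersected with the strip $a_h^-\le a\le a_h^+$ minus the validated tubes, by boxes on which $0\notin F_{J_{2h}}$, using the same recursive bisection as in Theorem~\ref{nondegfull}. For $a$ outside $[a_h^-,a_h^+]$, the whole admissible vertical slices must be certified zero-free.

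\textbf{Step 3: concavity (item 4).} Use
\[
b_h''=-\frac{F_{aa}F_b^2-2F_{ab}F_aF_b+F_{bb}F_a^2}{F_b^3}
\]
and evaluate it in interval arithmetic on the tube boxes, showing the sign is strictly negative. Near the endpoints I would shrink the boxes until the enclosures are tight enough to determine the sign.

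\textbf{Step 4: rank statement (item 5).} Off $E$, $F_{J_{2h}}=0$ gives rank at most one for the horizontal block. Rank exactly one follows by certifying that some entry, say $\hat a_{13}$ or $\hat a_{11}$, is nonzero on the tubes. Nonsingularity of the vertical block follows by certifying $F_{J_{2v}}\neq0$ on the tubes except near $E$. There $\Gamma_h$ and the vertical curve of Theorem~\ref{thm:DegReduced} meet: $F_{J_{2v}}$ should be proportional to $F_{red}$, and the red curve is disjoint from $\Gamma_h$ except at $E$. For this I would compare the two validated tubes and show they are disjoint away from a small neighbourhood of $E$. Inside that neighbourhood, I would use the certified slopes at $E$ (the slope of $\Gamma_h$ versus $1.0912$ for $\Gamma$) to show the curves cross only at $E$. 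Then the kernel of $J_2$, which is block diagonal after reordering, is one-dimensional and horizontal.

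\textbf{Main obstacle.} I expect the hardest part to be the neighbourhood of $E$. There $m_3$ is direction-dependent and the factors $d^3-r_a^3$ and $r_b^3-8$ vanish simultaneously, so $\partial_bF_{J_{2h}}$ may be small, and interval enclosures blow up. I would first try to show $\nabla F_{J_{2h}}(E)\neq0$ exactly, using symbolic evaluation at the algebraic point, and then apply the quantitative implicit function theorem on a tiny box centred at $E$. If $\partial_b$ vanishes there, I would switch to continuation in $b$, or to a parametrization along the rays $(\frac{\sqrt3}{3}+\varepsilon,\sqrt3+k\varepsilon)$ as in \eqref{limit_m3}.
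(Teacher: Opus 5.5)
Your Steps 1--3 and the rank-one part of Step~4 follow the paper's route: Krawczyk enclosures and transversality on the two boundary arcs, strip-wise validated continuation with $\partial_bF_{J_{2h}}\neq0$, interval evaluation of the implicit formula for $b_h''$, and a certified nonzero entry of the horizontal block.

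\textbf{The gap is in the separation argument near $E$.} You plan to show that $\Gamma_h$ meets the vertical/restricted degeneracy curve only at $E$ by comparing certified slopes. But the two curves are \emph{tangent} at $E$: $b_h'(E)=b_v'(E)\approx1.09117$, which is the very value you quote for $\Gamma$. This tangency is forced, not accidental.
\begin{itemize}
\item At $E$ one has $d^3=r_a^3$ and $r_b=2$, so $n_{m_3}=d_{m_3}=0$. Thus $F_{J_{2h}}(E)=0$ comes from the factor $d^3-r_a^3$ in $C_{factorH}$. It does not come from the triangle-plus-centre configuration being degenerate for every $m_3$; it is degenerate only at $m_3^*\approx0.7705$.
\item Along the ray $(\frac{\sqrt3}{3}+\varepsilon,\sqrt3+k\varepsilon)$ one gets $\partial_aF_{J_{2h}}(E)+k\,\partial_bF_{J_{2h}}(E)=K\,(4k-6)\,\bigl(a_{11}a_{33}-a_{13}a_{31}\bigr)\big|_{E,\,m_3=m_3(k)}$. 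Here $K\neq0$ and $m_3(k)=\sqrt3k/(9-6k)$ as in \eqref{limit_m3}.
\item The analogous identity holds for $F_{J_{2v}}$ with the vertical factor and another nonzero constant.
\item The two factors coincide at $E$ as functions of $m_3$ (Remark~\ref{rem:three-equal-m124}). Hence both zero sets leave $E$ in the single direction $k^*$ with $m_3(k^*)=m_3^*$.
\end{itemize}
Consequently your interval enclosures of the two slopes will always overlap. Tube disjointness cannot be certified arbitrarily close to $E$ either, because the curves separate only quadratically in $a-\frac{\sqrt3}{3}$. As written, your proof of $\Gamma_h\cap\{F_{J_{2v}}=0\}=\{E\}$, and hence of the second half of item~5, does not close.

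\textbf{The fix is a second-order comparison.} This is what the paper does: it certifies by second-order interval implicit differentiation that $b_h''-b_v''$ has a strict sign near $E$. Combined with item~4 ($b_h''<0$) and the convexity $b_v''>0$ of the vertical branch (Theorem~\ref{thm:Fv}), this gives more. The function $b_h-b_v$ is strictly concave on the common $a$-interval and has a double zero at $\frac{\sqrt3}{3}$. So $b_h<b_v$ away from $E$, and no tube comparison is needed at all.

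\textbf{Smaller points.}
\begin{itemize}
\item Work with $F_{J_{2v}}$ directly rather than with $\Gamma$ and $F_{red}$. The proportionality $F_{J_{2v}}\propto F_{red}$ that you invoke is not established.
\item Your extra check that slices with $a\notin[a_h^-,a_h^+]$ are zero-free is a sensible addition. However, it cannot be done by plain box exclusion up to the collision corner $(\sqrt3,\sqrt3)$ of $\overline{\mathcal R_{v2}}$. There $d=0$, all four $\hat a_{ij}$ vanish, and $F_{J_{2h}}=0$. That corner needs a separate local expansion showing that $F_{J_{2h}}$ keeps a strict sign inside $\mathcal R_{v2}$ nearby.
\end{itemize}
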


\begin{proof}
Recall that \[ F_{J_{2h}}(a,b) = \hat a_{11}\hat a_{33}-\hat a_{31}\hat a_{13}. \]
Since all denominator factors removed in the definition of
$F_{J_{2h}}$ are nonzero in the admissible region, the horizontal
block of the full Jacobian is singular if and only if $F_{J_{2h}}(a,b)=0.$ We apply rigorous interval arithmetic and the Krawczyk operator to determine the zero set of $F_{J_{2h}}$ in $\mathcal R_{v1}\cup\mathcal R_{v2}\cup\{E\}.$ 

All interval computations are carried out with outward rounding in
SageMath~10.7. We first determine the two boundary intersections.  On the curved
boundary $b=a+\sqrt{a^2+1},$ 
we consider the one-variable function
\[
H_-(a)
=
F_{J_{2h}}
\left(a,a+\sqrt{a^2+1}\right).
\]
The interval Krawczyk operator gives exactly two zeros
\[
a_h^- = 0.33170680252448958\ldots. \hbox{ and } a_E=0.5773502693\ldots =\frac{\sqrt{3}}{3}, 
\]
Thus the corresponding left endpoint is $A= \left( a_h^-, a_h^-+\sqrt{(a_h^-)^2+1}\right).$ 
To determine the direction in which the zero curve crosses the
boundary, introduce $s=b-a-\sqrt{a^2+1}$ 
and set $G_-(a,s) = F_{J_{2h}} \left(a,a+\sqrt{a^2+1}+s\right).$
A rigorous interval evaluation in a neighborhood of $(a_h^-,0)$ gives
$\frac{\partial G_-}{\partial s}\neq0,-\frac{G_{-,a}}{G_{-,s}}>0.$ 
Hence, by the implicit function theorem, the zero curve crosses the
boundary transversally at $A$ and enters $\mathcal R_{v1}$ as $a$
increases.

Similarly, on the boundary $b=\sqrt3$, the function $H_+(a)=F_{J_{2h}}(a,\sqrt3)$  has, by interval Newton verification, two zeros
\[
a_h^+ = 0.9873974550850850250\ldots \hbox{ and } a_E=0.5773502693\ldots =\frac{\sqrt{3}}{3}, 
\]
Thus the right endpoint is $B=(a_h^+,\sqrt3).$  Introducing $t=b-\sqrt3$ and $G_+(a,t)=F_{J_{2h}}(a,\sqrt3+t),$ 
the interval calculation gives
\[
\frac{\partial G_+}{\partial t}\neq0,
\qquad
-\frac{G_{+,a}}{G_{+,t}}<0.
\]
Therefore the zero curve approaches $B$ transversally from the
admissible side of $\mathcal R_{v2}$.

We next rigorously continue the zero curve between these two endpoint
neighborhoods.  The interval  $(a_h^-,a_h^+)$ is covered by $1000$ vertical parameter strips $A_i$.  For each
$A_i$, an interval $B_i$ is constructed around a numerical predictor and 
the Krawczyk  interval $K_i$  is verified to satisfy $K_i\subset\operatorname{int}(B_i).$  
Consequently, for every $a\in A_i$ there exists a unique
$b\in B_i$ satisfying $F_{J_{2h}}(a,b)=0.$ 
Moreover, rigorous interval evaluation proves $\partial_bF_{J_{2h}}(a,b)\neq0$ on every admissible vertical fiber covered by these strips; in fact, the computation gives the uniform certified bound
$\left|\partial_bF_{J_{2h}}(a,b)\right| >5.7417\times10^4.$ 
Hence no additional zero can occur on any admissible vertical fiber.
The implicit function theorem therefore shows that these locally
validated pieces form a unique connected smooth graph
\[
\Gamma_h = \left\{ (a,b_h(a)): a_h^-<a<a_h^+ \right\}.
\]

At the special point $E=\left(\frac{\sqrt3}{3},\sqrt3\right),$  direct symbolic substitution gives exactly $F_{J_{2h}}(E)=0.$ Thus $E\in\Gamma_h$.  In addition, $\partial_bF_{J_{2h}}(E)\neq0,$ 
and interval evaluation of the implicit derivative gives $b_h'(E)=-\frac{\partial_aF_{J_{2h}}(E)}{\partial_bF_{J_{2h}}(E)} \approx1.09117.$

As for the concavity of the curve,  the second derivative of the implicit graph is given by
\[ b''(a) =-\frac{ \partial_{aa}F_{J_{2h}} +2\partial_{ab}F_{J_{2h}}b'(a)+\partial_{bb}F_{J_{2h}}(b'(a))^2 }{\partial_{b}F_{J_{2h}}}, \qquad b'(a)=-\frac{\partial_{a}F_{J_{2h}}}{\partial_{b}F_{J_{2h}}}. \]
Using the validated root tubes obtained above, rigorous interval
evaluation of this expression proves $b_h''(a)<0$
throughout $(a_h^-,a_h^+)$ for the horizontal branch.  Hence
$\Gamma_h$ is strictly concave downward.

It remains to determine the kernel of the full Jacobian along $\Gamma_h$.  Rigorous interval evaluation verifies that
\[F_{J_{2v}}(a,b)\neq0 \qquad \text{for }(a,b)\in\Gamma_h\setminus\{E\}, \] except that a separate local argument is required near $E$, where
the horizontal and vertical degeneracy curves are tangent.  At $E$,
symbolic computation gives $F_{J_{2h}}(E)=F_{J_{2v}}(E)=0$
and $b_h'(E)=b_v'(E).$ 
Second-order interval implicit differentiation in a neighborhood of
$E$ proves that $b_h''(a)-b_v''(a)$  has a strict nonzero sign there.  Hence the two degeneracy curves have
no other intersection in this neighborhood, and therefore
\[
\Gamma_h\cap\{F_{J_{2v}}=0\}=\{E\}.
\]

Finally, interval evaluation also verifies that at least one entry of
the horizontal $2\times2$ block is nonzero at every point of $\Gamma_h$.  Since $F_{J_{2h}} = \det(J_{2h})=0\,\, \text{on }\Gamma_h,$  the horizontal block has rank exactly one.  For $(a,b)\in\Gamma_h\setminus\{E\}$ the vertical block is nonsingular, since $F_{J_{2v}}\neq0$.  Consequently, the kernel of the full Jacobian is one-dimensional and lies in the horizontal subspace.

This proves all the assertions.
\end{proof}

\begin{theorem}[\textbf{Vertical degeneracy curve in full space}]\label{thm:Fv}
Let $F_{J_{2v}}(a,b)=(\hat{a}_{22}\hat{a}_{44}-\hat{a}_{24}\hat{a}_{42})$ be given in equation \eqref{Fj2v}.
Then the zero set of $F_{J_{2v}}$ in the concave region $\mathcal R_{v1}\cup\mathcal R_{v2}\cup\{E\}$ 
consists of a unique connected smooth curve
 $$ \Gamma_v =\{(a,b):F_{J_{2v}}(a,b)=0\} = \{(a,b_v(a)):a_v^-< a< a_v^+\},$$
where $a_v^- \approx 0.2805703167 $ and $a_v^+ \approx 0.8573000157 $. Moreover,
\begin{enumerate}
\item
For any $a\in(a_v^-,a_v^+)$, $\Gamma_v$ intersects the vertical line $a=\mathrm{constant}$ at exactly one point.

\item
The curve joins the two boundary components of the concave region,
with one endpoint $C= \left(a_v^+, a_v^++\sqrt{{a_v^+}^2+1}\right)$ on $b=a+\sqrt{a^2+1},$ 
and the other endpoint $D=\left(a_v^-,\sqrt{3}\right)$, on $b=\sqrt3 $ (see the red curve in Figure \ref{fig_Fj2h2v}).

\item
The curve passes through the point $E=\left(\frac{\sqrt3}{3},\sqrt3\right)$.
\item
For every $\left(a,b\right)\in\Gamma_v\setminus\{E\}$, the vertical block of the full Jacobian has rank one, while the horizontal block is nonsingular.
Consequently, the kernel of the full Jacobian is one-dimensional and lies in the vertical subspace.
\end{enumerate}
\end{theorem}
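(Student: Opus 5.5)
The proof of Theorem~\ref{thm:Fv} will follow the same computer-assisted template as the proof of Theorem~\ref{thm:Fh}, with $F_{J_{2v}}$ in place of $F_{J_{2h}}$. Since the factor $C_{factorV}$ does not vanish in the admissible region, the vertical block of $J_2$ is singular exactly where $F_{J_{2v}}=0$. The endpoints coincide numerically with $D$ and $C$ of Theorem~\ref{thm:DegReduced}. This is expected: a symmetric perturbation keeps $x_3=x_4=0$, so the vertical block $\begin{pmatrix}a_{22}&a_{24}\\a_{42}&a_{44}\end{pmatrix}$ should be, up to a nonzero factor, the linearization of the reduced system $G$ in $(a,b)$ after fixing the scale $x_2-x_1=2$. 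I would therefore first try to prove the exact identity $F_{J_{2v}}=\kappa(a,b)\,F_{red}(a,b)$, with $\kappa$ nonvanishing on $\mathcal R_{v1}\cup\mathcal R_{v2}$. The most likely tool is symbolic simplification after substituting $m_3=n_{m_3}/d_{m_3}$ and $m_4=n_{m_4}/d_{m_4}$. If this identity holds, items 1--3 and the smoothness follow directly from Theorem~\ref{thm:DegReduced} and the accompanying remark, including the crossing directions at $D$ and $C$ and the passage through $E$. If the identity fails or is hard to certify, I would instead prove the statements from scratch, as described next.

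For the direct proof I would proceed in four steps.
\begin{enumerate}
\item Restrict to the boundaries. On $b=\sqrt3$ and on $b=a+\sqrt{a^2+1}$, apply the one-dimensional Krawczyk operator to $F_{J_{2v}}$. I expect exactly two zeros on each boundary: $a_v^-$ and $\sqrt3/3$ on $b=\sqrt3$, and $a_v^+$ and $\sqrt3/3$ on the curved boundary.
\item Check transversality at the endpoints. Use local coordinates $t=b-\sqrt3$ and $s=b-a-\sqrt{a^2+1}$, as in the proof of Theorem~\ref{thm:Fh}. Interval bounds on the partial derivatives should show that the curve enters $\mathcal R_{v1}$ at $D$ and leaves $\mathcal R_{v2}$ at $C$.
\item Continue the curve in $a$. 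Cover $(a_v^-,a_v^+)$ by finitely many strips and verify Krawczyk inclusions $K_i\subset\operatorname{int}(B_i)$ on each. Obtain a certified bound $|\partial_b F_{J_{2v}}|>c>0$ on every admissible vertical fiber. Then exclude zeros outside the tubes by interval evaluation, using adaptive bisection over the admissible part of the fiber. This gives item~1, uniqueness, and smoothness via the quantitative implicit function theorem of \cite{Sun2023}.
\item Treat the point $E$ exactly. Verify $F_{J_{2v}}(E)=0$ by symbolic substitution with $m_4=1$ and the limiting $m_3$ from \eqref{limit_m3}, and check $\partial_bF_{J_{2v}}(E)\ne0$.
\end{enumerate}

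For item~4, interval evaluation along the validated tubes should show that some entry of the vertical block is nonzero, so the block has rank exactly one. The horizontal block must then be nonsingular on $\Gamma_v\setminus\{E\}$. Away from a small neighbourhood of $E$, this follows by certifying $F_{J_{2h}}\neq0$ on the tubes. Near $E$ the two curves are tangent, since both have slope about $1.09117$, so a direct enclosure cannot separate them. Here I would reuse the second-order argument from Theorem~\ref{thm:Fh}: a strict sign of $b_h''-b_v''$ on a neighbourhood of $E$ shows that $\Gamma_h\cap\Gamma_v=\{E\}$ there. Combining the two blocks then gives a one-dimensional kernel lying in the vertical subspace.

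The main obstacle is the neighbourhood of $E$. There $m_3$ is defined only as a direction-dependent limit, the factors $d_{m_3}$ and $n_{m_3}$ both vanish, and interval enclosures of $F_{J_{2v}}$ and its derivatives lose sharpness. The tangency with $\Gamma_h$ also forces second-order control. I would try to handle this by working with the polynomial-like form $\hat a_{ij}$, which avoids dividing by $d_{m_3}$, and by using very fine boxes with higher precision near $E$. If needed, I would factor out the common vanishing of $d_{m_3}$ and $n_{m_3}$ along the line through $E$ by a blow-up $b=\sqrt3+k(a-\sqrt3/3)$.
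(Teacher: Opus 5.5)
Your fallback plan (Krawczyk on the two boundary arcs, transversality in the $t,s$ coordinates, strip-wise continuation with fiber exclusion, and the sign of $b_h''-b_v''$ near $E$ for item~4) is essentially the paper's proof. The paper never tries your primary route. It re-runs the validated continuation for $F_{J_{2v}}$ from scratch, and the agreement of $a_v^\pm$ with the endpoints $D$, $C$ of Theorem~\ref{thm:DegReduced} appears there only as matching numbers.

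Your identity is true, and it follows structurally rather than by brute-force simplification. Write $\tilde g_1,\tilde g_2$ for the left-hand sides of the two equations in \eqref{eqR}.
\begin{enumerate}
\item On kite configurations, $F_4^y-F_1^y+bF_1^x\equiv\tilde g_1$ and $F_3^y-F_1^y+aF_1^x\equiv\tilde g_2$, because the $\lambda$ and $c_y$ terms cancel identically.
\item For vertical perturbations, reflection symmetry gives $\partial F_2^x=-\partial F_1^x$ and $\partial F_2^y=\partial F_1^y$. So the $y_3$ and $y_4$ rows of $C=B_2B_1^{-1}$ act as $-a\,\partial F_1^x+\partial F_1^y$ and $-b\,\partial F_1^x+\partial F_1^y$.
\item Since $F_1^x=0$ at a central configuration, the block $\begin{pmatrix}a_{22}&a_{24}\\ a_{42}&a_{44}\end{pmatrix}$ is exactly $\partial(\tilde g_2,\tilde g_1)/\partial(a,b)$. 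You can confirm this entry by entry against Theorem~\ref{Prop:J2Entries}.
\item Since $g_i=4r_a^3r_b^3d^2\,\tilde g_i$, this gives $F_{red}=16\,d^2r_a^4r_b^4\,F_{J_{2v}}$ wherever $d_{m_3}d_{m_4}\neq0$. By continuity it holds on all of $\{d>0\}$.
\end{enumerate}

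This route buys a lot. Items 1--3 and smoothness become corollaries of Theorem~\ref{thm:DegReduced}. The agreement of $C$, $D$ and the slope $b_v'(E)\approx1.09117$ with the restricted values becomes exact rather than numerical. It also proves the introduction's claim that $\Gamma_v$ is the restricted-subspace degeneracy curve, which the paper never proves.

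The only new work is item~4, which you and the paper handle identically. Rank one is even automatic there, since $a_{42}=-m_3(2/d^3+1/r_a^3+3ad/r_a^5)<0$ for $a>0$. The paper's route is self-contained and yields $b_v''>0$ as a by-product, but that is not part of the statement.

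Two small corrections.
\begin{enumerate}
\item There is no $m_3$ to substitute at $E$, because $F_{J_{2v}}$ is already a function of $(a,b)$ alone. It vanishes at $E$ automatically, since every term of $\hat a_{42}$ and $\hat a_{44}$ carries a factor $r_b^3-8$ or $d^3-r_a^3$. The $\hat a_{ij}$ have no denominators, so nothing singular happens to the interval enclosures there and no blow-up is needed.
\item If you use the direct route, covering only fibers with $a\in(a_v^-,a_v^+)$ does not exclude zeros in $\mathcal R_{v1}\cap\{a\le a_v^-\}$ or in $\mathcal R_{v2}\cap\{a\ge a_v^+\}$. A box exclusion over those parts is needed for the ``unique curve'' claim; the paper's proof is silent on this as well. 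The identity route inherits this from the statement of Theorem~\ref{thm:DegReduced}.
\end{enumerate}
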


\begin{proof}
Recall that
\[ F_{J_{2v}}(a,b) = \hat a_{22}\hat a_{44}-\hat a_{24}\hat a_{42}.\]
Since the denominator factors removed in the definition of $F_{J_{2v}}$ are nonzero in the admissible region, the vertical block
of the full Jacobian is singular if and only if $F_{J_{2v}}(a,b)=0. $

We use rigorous interval arithmetic and the interval Krawczyk operator to
determine the zero set of $F_{J_{2v}}$ in $\mathcal R_{v1}\cup\mathcal R_{v2}\cup\{E\}.$
All interval computations are performed with outward rounding in
SageMath~10.7.

We first determine the two boundary intersections. On the boundary
$b=\sqrt3$, define $ V_-(a)=F_{J_{2v}}(a,\sqrt3). $ 
The interval Krawczyk operator gives exactly two zeros
\[a_v^-= 0.2805703167\ldots \hbox{ and } a_E=0.5773502693\ldots =\frac{\sqrt{3}}{3}. 
\]
Hence the corresponding endpoint is $D=(a_v^-,\sqrt3).$ 
Introducing $t=b-\sqrt3$
and  $G_-(a,t)=F_{J_{2v}}(a,\sqrt3+t), $ 
rigorous interval evaluation in a neighborhood of $(a_v^-,0)$ gives
\[ \frac{\partial G_-}{\partial t}\neq0, \qquad -\frac{G_{-,a}}{G_{-,t}}<0.
\]
Therefore the zero curve crosses $b=\sqrt3$ transversally at $D$ and
enters $\mathcal R_{v1}$ as $a$ increases.

On the curved boundary  $b=a+\sqrt{a^2+1}, $ 
define  $V_+(a) = F_{J_{2v}} \left(a,a+\sqrt{a^2+1}\right).$ 
The interval Krawczyk operator gives exactly two zeros
\[ a_v^+ = 0.8573000157\ldots  \hbox{ and } a_E=0.5773502693\ldots =\frac{\sqrt{3}}{3}.\]
Thus the second endpoint is $C= \left( a_v^+, a_v^++\sqrt{(a_v^+)^2+1} \right).$
Let $ s=b-a-\sqrt{a^2+1}$ 
and $G_+(a,s) = F_{J_{2v}} \left(a,a+\sqrt{a^2+1}+s\right).$ 
The interval calculation gives
\[ \frac{\partial G_+}{\partial s}\neq0, \qquad -\frac{G_{+,a}}{G_{+,s}}>0.
\]
Hence the zero curve reaches the blue boundary transversally from the
admissible side of $\mathcal R_{v2}$.

We next rigorously continue the zero set between the two endpoint
neighborhoods. The interval  $(a_v^-,a_v^+)$ 
is covered by $1000$ vertical parameter strips $A_i$. For each strip,
an interval $B_i$ is chosen around a numerical predictor and  the Krawczyk  interval $K_i$  is verified to satisfy 
$K_i\subset\operatorname{int}(B_i).$

Thus, for every $a\in A_i$, there exists a unique
$b\in B_i$ satisfying $F_{J_{2v}}(a,b)=0.$ 
Any possible additional zero on the same admissible vertical fiber is excluded either by a rigorous monotonicity test for
$\partial_bF_{J_{2v}}$ or by direct interval zero exclusion outside the certified Newton tube. Hence every admissible vertical fiber with
$a\in(a_v^-,a_v^+)$ contains exactly one zero of $F_{J_{2v}}$.

Moreover, $\partial_bF_{J_{2v}}\neq0$ on every certified root tube. The implicit function theorem therefore
shows that the validated local pieces form a unique connected smooth
graph $\Gamma_v = \left\{ \left(a,b_v(a)\right): a_v^-<a<a_v^+ \right\}.$

At $E=\left(\frac{\sqrt3}{3},\sqrt3\right),$  direct symbolic substitution gives exactly $F_{J_{2v}}(E)=0.$ 
Thus $E\in\Gamma_v$. In addition, $\partial_bF_{J_{2v}}(E)\neq0,$  so the curve is smooth through $E$.

As for the concavity, similar computations show that $b_v''(a)>0$ throughout $(a_v^-,a_v^+)$ for the vertical branch. Therefore $\Gamma_v$ is strictly convex upward.

It remains to determine the kernel of the full Jacobian along $\Gamma_v$. Rigorous interval evaluation verifies that
$F_{J_{2h}}(a,b)\neq0 $ for $ (a,b)\in\Gamma_v\setminus\{E\},$   except that a separate local argument is required near $E$, where the
horizontal and vertical degeneracy curves are tangent. The argument is provided in the proof of Theorem~\ref{thm:Fh}.

Finally, interval evaluation verifies that at least one entry of the
vertical $2\times2$ block is nonzero at every point of $\Gamma_v$. Since
$F_{J_{2v}}b= \det(J_{2v})=0$ on  $\Gamma_v,$  the vertical block has rank exactly one. For
$(a,b)\in\Gamma_v\setminus\{E\}$, the horizontal block is nonsingular
because $F_{J_{2h}}\neq0$. Consequently, the kernel of the full
Jacobian is one-dimensional and lies in the vertical subspace.

This proves all the assertions.
\end{proof}

\begin{remark}[\bf The higher degeneracy at the equilateral configuration]
\label{rem:three-equal-m124}
The highly symmetrical configuration $E=\left(\frac{\sqrt{3}}{3},\sqrt{3}\right)$, which corresponds to an equilateral triangle 
with the vertices
\[
q_1=(-1,0),\qquad
q_2=(1,0),\qquad
q_4=(0,\sqrt{3}),
\]
 while $q_3=\left(0,\frac{\sqrt{3}}{3}\right)$ 
is located at its center, plays a prominent role in the whole picture. By \eqref{eqm34}, $m_4=1$ but $m_3$ can be any positive value. There are different ways to study this configuration. 

First, the point $E$ is particularly interesting from the viewpoint of full-space
degeneracy. Substituting $m_4=1$, $(a,b)=\left(\frac{\sqrt{3}}{3},\sqrt{3}\right)$ into \eqref{detJ2} for $\det{J_2}$ with $m_3$ as a free parameter, one obtains 
$$ (a_{11}a_{33}-a_{13}a_{31})=(a_{22}a_{44}-a_{24}a_{42})= \frac{\left(135 m_3 +81\right) \sqrt{3}}{256}-\frac{243 m_3}{128}+\frac{27}{128},$$ which has zero $m_3^*= \frac{27}{83}+\frac{64 \sqrt{3}}{249}\approx 0.7704869546.$\\

On the other hand, both the horizontal and vertical degeneracy conditions are
satisfied:
\[
F_{J_{2h}}(E)=F_{J_{2v}}(E)=0.
\]
Moreover, {implicit differentiation} of the two degeneracy equations gives the
same tangent slope at \(E\):
\[
\left.
\frac{db}{da}
\right|_{F_{J_{2h}}=0,E}
=
\left.
\frac{db}{da}
\right|_{F_{J_{2v}}=0,E}
\approx
1.09117428169.
\]
Hence the horizontal and vertical degeneracy loci meet tangentially at \(E\),
rather than intersects transversally. {Moreover, the three masses at the vertices are
equal, $m_1=m_2=m_4=1,$ 
whereas the mass at the center is $m_3^*\approx 0.7704869546,$ 
as computed from \eqref{limit_m3} with slope $k=1.09117428169$. Thus, \(E\) corresponds to a highly
symmetric central configuration consisting of three equal masses at the
vertices of an equilateral triangle and a fourth mass at its center.}

This tangency reveals the enhanced symmetry of the configuration
at \(E\). At a generic point of the horizontal degeneracy locus
\(\Gamma_h\), only the horizontal block \(J_{2h}\) is singular, and the
corresponding kernel direction breaks the reflection symmetry of the kite.
On the other hand, at a generic point of the vertical degeneracy locus
\(\Gamma_v\), only the vertical block \(J_{2v}\) is singular, and the
corresponding kernel direction preserves the kite symmetry. At \(E\), however,
both blocks are singular simultaneously. 
Each of these \(2\times2\) blocks has rank one at \(E\)  as verified directly, which implies that
\[
\dim\ker J_2(E) = \dim\ker J_{2h}(E)+\dim\ker J_{2v}(E) = 2.
\]
Thus, besides the trivial directions associated with translation, rotation,
and scaling symmetries which have been singled out, the configuration at \(E\) possesses two
independent nontrivial degeneracy directions: one symmetry-preserving and one
symmetry-breaking. This agrees with the multiplicity-two degeneracy of the
nontrivial zero eigenvalue reported in \cite{Shi2010}.

The configuration \(E\) therefore provides a natural meeting point between
the symmetric and symmetry-breaking degeneracy structures. The coincidence
of the two degeneracy conditions and their tangent directions at precisely
this higher-symmetry configuration is consistent with the general connection
between symmetry and bifurcation in the classical studies
\cite{Meyer1987,Meyer1988,Santoprete2015}. {Moreover, we build the analysis of \cite{Meyer1988} into the mass parameter family, and we confirm the uniqueness of degeneracy found in \cite{Santoprete2015}.} It also complements the analysis of
kite central configurations in \cite{Roberts2025} {where degeneracy was studied in our restricted kite symmetry setting numerically}, and {the three-equal-mass
concave kite family studied in \cite{Liu2026_2}}. In this sense, \(E\) is not
merely an isolated degenerate configuration, but a distinguished point with
higher-symmetry where the symmetric and symmetry-breaking degeneracy
loci become tangential and the nontrivial nullity increases from one to two.

\end{remark}

In the following we put two more cases encountered often in the literature into our framework.

\begin{remark}[\textbf{Case 1: three equal masses with} $\mathbf{m_1=m_2=m_3=1}$]
\label{rem:three-equal-m123}
Solving equations $n_{m_3}(a,b)=d_{m_3}(a,b)$ and $F_{J_{2h}}(a,b)=0$, or equations $n_{m_3}(a,b)=d_{m_3}(a,b)$ and $F_{J_{2v}}(a,b)=0$, we obtain two solutions. The two degenerate central configurations in this case correspond to two different
bifurcation mechanisms.  Numerically, they occur at
\[
(a,b,m_3,m_4) \approx (0.5412989378,\,1.690172439,\,1,\,0.9918422748)
\]
and
\[
(a,b,m_3,m_4) \approx (0.6134917485,\,1.774177390,\,1,\,1.002660546).
\]

For the first degenerate configuration, the degeneracy occurs in the horizontal
block of $J_2$. The corresponding kernel direction breaks the reflection
symmetry of the kite, and the symmetric branch bifurcates into two
non-symmetric branches related to each other by reflection. Thus, this point
represents a symmetry-breaking bifurcation in the full planar configuration
space. This phenomenon is consistent with the general equivariant
bifurcation picture for central configurations developed in
\cite{Santoprete2015}. In particular, degeneracy in the
full configuration space may produce branches that are not visible when the
problem is restricted a priori to the kite-symmetric subspace. Similar
symmetry-breaking phenomena for concave kite configurations with three equal
masses are described in \cite{Liu2026_2}.

The second degenerate configuration has a different character. It occurs on
the symmetric branch and corresponds to a fold-type bifurcation. As $m_4$
passes through the critical value
\[
m_4^*\approx1.002660546,
\]
the two symmetric solutions coalesce and disappear; consequently, there are
no symmetric concave kite central configurations of this family beyond this
critical mass value. {This is a quite different phenomenon from the convex case where one always has symmetric central configuration for $m_1=m_2=m_3$, and the intuition that equal masses somehow imply symmetric central configurations fails for the concave case.} The bifurcation is the standard saddle-node or fold mechanism
appearing in one-parameter families of central configurations, as discussed
in \cite{Liu2026_2,Santoprete2015}.

\end{remark}

\begin{remark}[\textbf{Case 2: two pairs of equal masses with} $\mathbf{m_1=m_2=1}$ \textbf{and} $\mathbf{m_3 = m_4 =}$ $\boldsymbol{\mu}$]
\label{rem:two-pairs-equal}

Solving equations $n_{m_3}(a,b)d_{m_4}(a,b)=d_{m_3}(a,b)n_{m_4}(a,b)$ and $F_{J_{2h}}(a,b)=0$, or equations $n_{m_3}(a,b)d_{m_4}(a,b)=d_{m_3}(a,b)n_{m_4}(a,b)$  and $F_{J_{2v}}(a,b)=0$, we obtain two solutions.
The two degenerate configurations obtained under the constraint $m_3=m_4=\mu$
are numerically
\[
(a,b,\mu)\approx(0.5423375242,\,1.691445724,\,0.99229944)
\]
and
\[
(a,b,\mu)\approx(0.6138576373,\,1.774630544,\,1.00271336).
\]

These two points again correspond to two distinct types of degeneracy. The
first lies on the horizontal degeneracy locus and is associated with a
symmetry-breaking bifurcation in the full planar configuration space. The
second lies on the vertical degeneracy locus and corresponds to the
fold-type bifurcation of the symmetric family. In particular, the latter
occurs at the critical mass ratio
\[
\mu_0\approx1.002713329,
\]
at which the two symmetric concave kite configurations coalesce. For
$0<\mu<\mu_0$ there are two symmetric configurations, at $\mu=\mu_0$ there
is one degenerate configuration, and for $\mu>\mu_0$ there are no symmetric
configurations of this type.

This agrees with the complete classification of the concave kite family with
two pairs of equal masses obtained in \cite{Liu2026}, where the
critical configuration at $\mu=\mu_0$ was proved to be a fold-type
bifurcation in the symmetry-restricted subspace. The present full-space analysis
adds another aspect to that picture: before the fold is reached, the
symmetric family encounters a horizontal degeneracy near
\[
\mu\approx0.99229944,
\]
from which non-symmetric configurations may bifurcate. Thus, the reduced
analysis detects the fold of the symmetric family, while the full-space
Jacobian also detects the earlier loss of nondegeneracy in a
symmetry-breaking direction.

This distinction illustrates why it is important to study degeneracy in the
full configuration space rather than only in the kite-symmetric subspace.
It is also consistent with the general principle in equivariant bifurcation
theory that a symmetric branch may lose nondegeneracy through a mode lying
outside its fixed-point subspace; see
\cite{Meyer1987,Meyer1988,Santoprete2015}. The results therefore connect the
complete symmetric classification of \cite{Liu2026, Roberts2025} with the broader
bifurcation picture for kite central configurations in full space.
\end{remark}

\begin{figure}[ht]
    \centering
    \includegraphics[width=0.48\textwidth,height=3in]{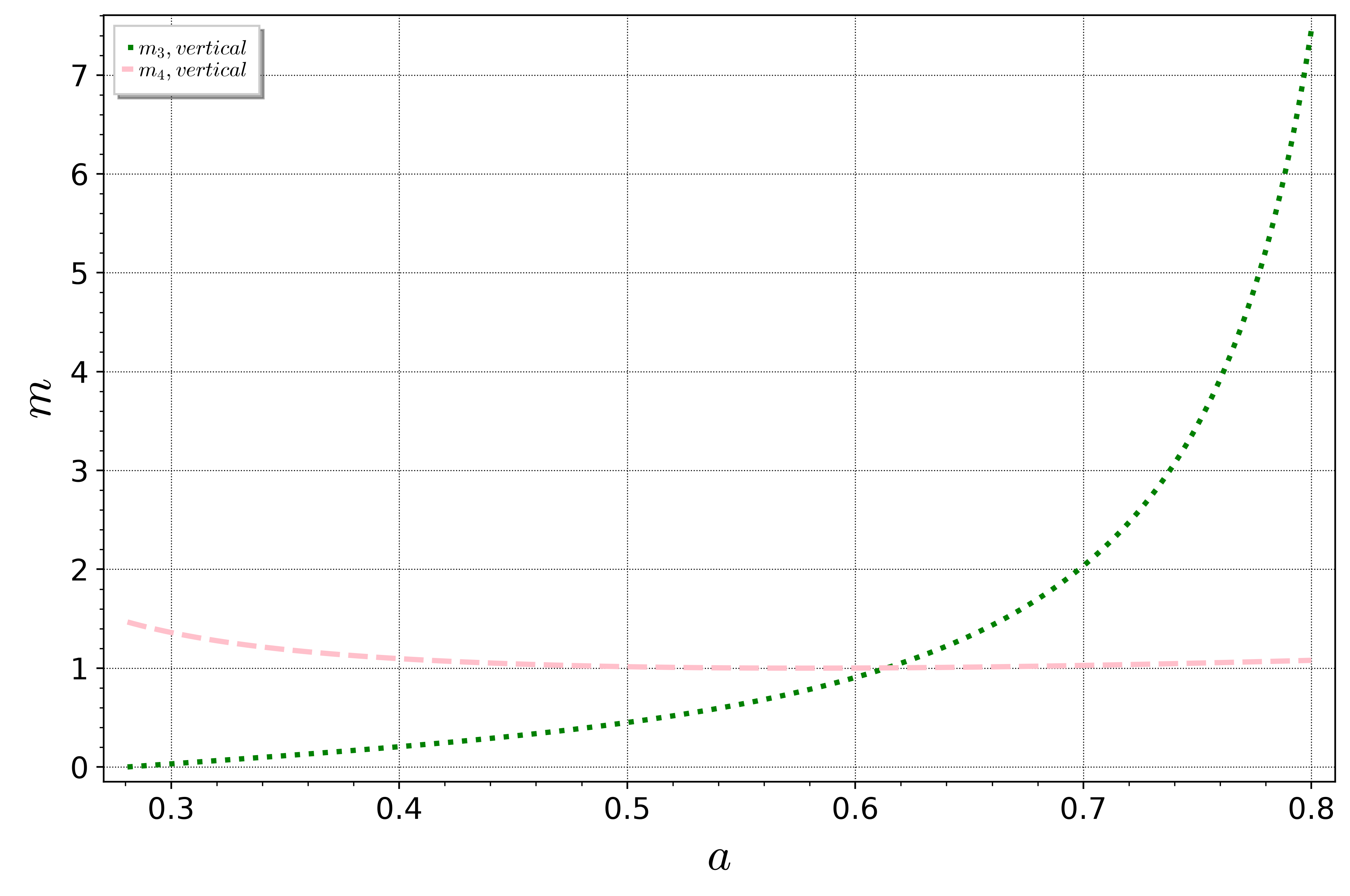}
     \includegraphics[width=0.48\textwidth,height=3in]{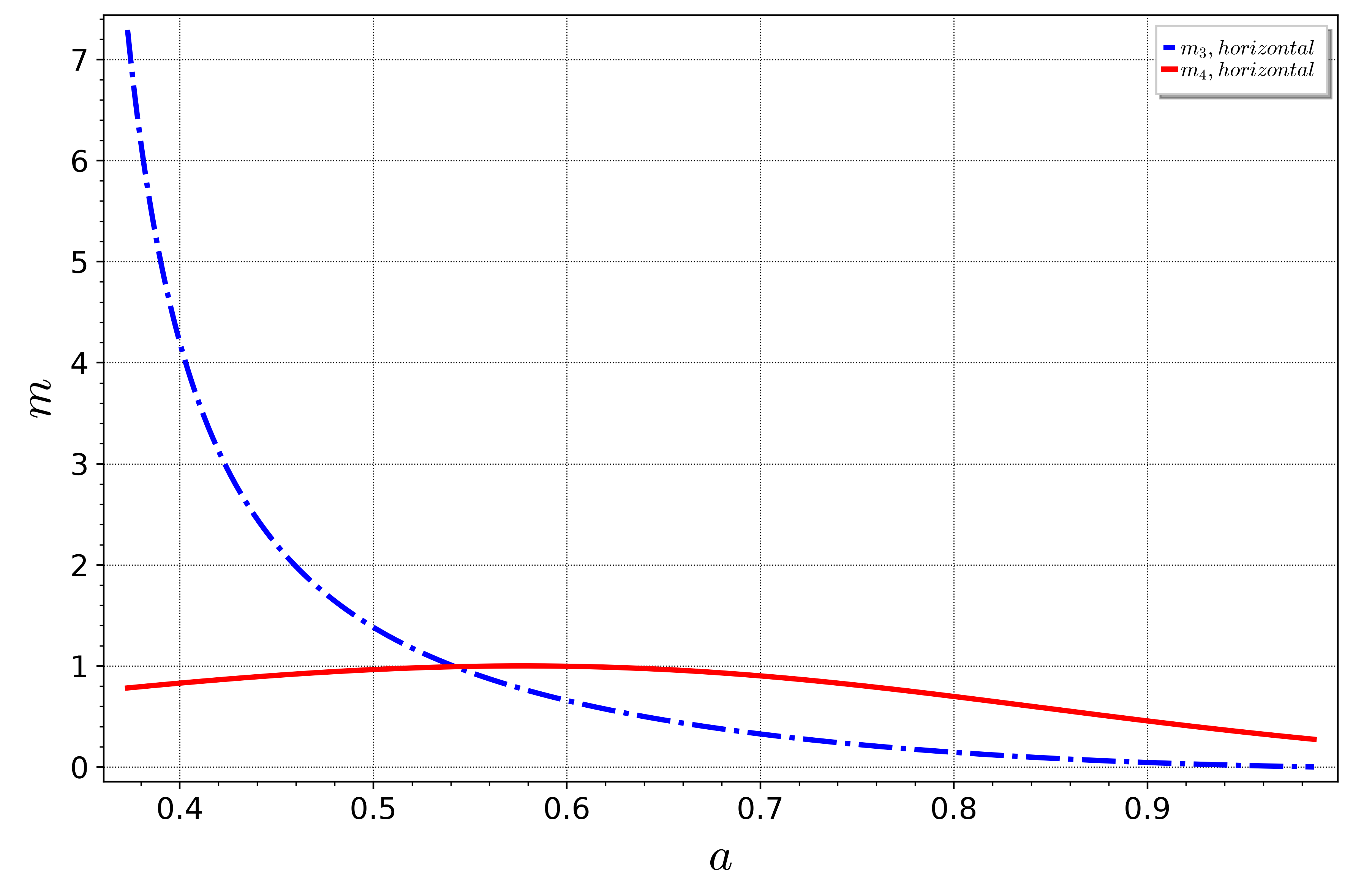}
    \caption{Variation of the masses $m_3$ (green dotted) and $m_4$ (pink dashed) along the vertical
degeneracy locus $\Gamma_v$ (left) and the masses $m_3$ (blue dash-dotted) and $m_4$ (red solid) along the horizontal degeneracy locus
$\Gamma_h$ (right). The intersections $m_3=m_4$ correspond to the two
degenerate configurations in the two-pairs-of-equal-masses family. In both cases, $m_3$ ranges from $0$ to $\infty.$}
    \label{fig:mass-degeneracy}
\end{figure}

\begin{remark}[\textbf{Variation of the masses along the degeneracy loci}]
\label{rem:mass-along-degeneracy}

Figure~\ref{fig:mass-degeneracy} illustrates the variation of the masses
$m_3$ and $m_4$ along the vertical and horizontal degeneracy loci. The two
branches exhibit markedly different mass behavior.

Along the vertical degeneracy locus
\[
\Gamma_v=\{(a,b):F_{J_{2v}}(a,b)=0\},
\]
the mass $m_3$ increases monotonically with $a$. Near the left endpoint of
the branch, $m_3$ approaches zero, whereas it grows rapidly to infinity as the right
endpoint is approached. In contrast, $m_4$ varies much more slowly: it
initially decreases, attains a value close to $1$, and then increases
slightly. Thus, a large range of mass ratios can produce degeneracy in the
symmetry-preserving vertical direction.

The behavior along the horizontal degeneracy locus
\[
\Gamma_h=\{(a,b):F_{J_{2h}}(a,b)=0\}
\]
is essentially complementary. As $a$ increases along this branch, $m_3$
decreases monotonically, from a very large value approaching infinity near the left
endpoint to a value approaching zero near the right endpoint. The mass
$m_4$, on the other hand, remains of order one: it first increases toward
$1$ and then decreases. Hence the symmetry-breaking degeneracy represented
by $\Gamma_h$ also persists over a substantial range of mass ratios, rather
than occurring only when the four masses are nearly equal.

An especially interesting feature of the two plots is the occurrence of
points at which $m_3=m_4.$  These intersections correspond to concave central configurations with two
pairs of equal masses,  $ m_1=m_2=1, m_3=m_4=\mu.$ 
On the horizontal degeneracy branch this gives the symmetry-breaking
degenerate configuration, whereas on the vertical degeneracy branch it gives the fold-type degenerate
configuration. These are precisely the two types of degeneracy  in Case 2. 

\begin{figure}[ht]
    \centering
    \includegraphics[width=0.6\textwidth,height=3in]{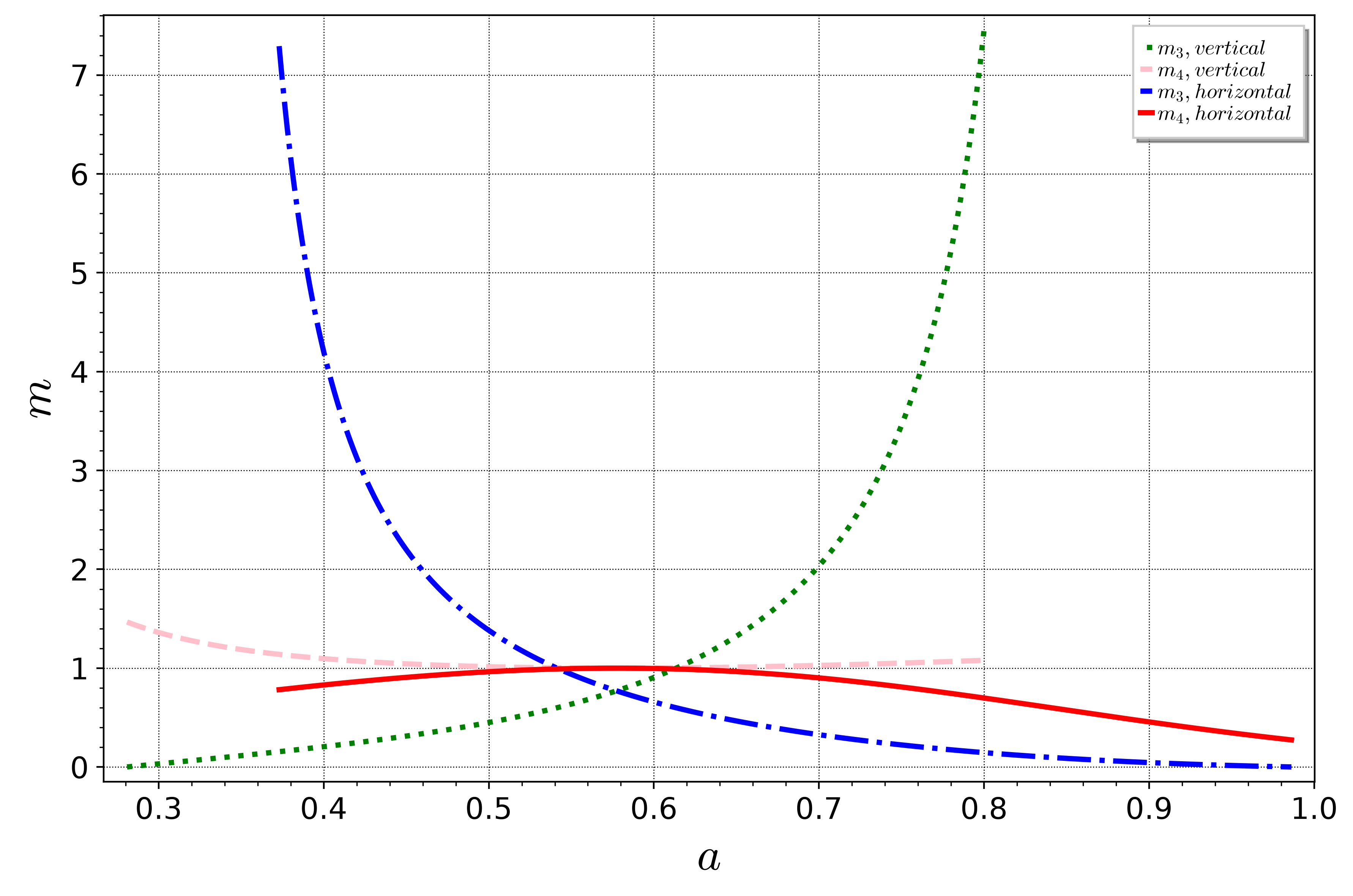}
    \caption{The masses $m_3$ and $m_4$ along the vertical and horizontal degeneracy curves, parametrized by $a$. The green dotted and blue dash-dotted curves represent $m_3$ along the vertical and horizontal degeneracy curves, respectively, while the pink dashed and red solid curves represent the corresponding values of $m_4$. The two $m_3$ curves intersect at the same parameter value at which the two $m_4$ curves intersect, namely at the special configuration $E=\left(\frac{\sqrt{3}}{3},\sqrt{3}\right) \approx (0.5773502693,1.732050808).$  $m_3(E)\approx 0.7704869546,m_4(E)=1.$
Thus, at $E$, the vertical and horizontal degeneracy conditions correspond to the same mass parameters.}
\label{fig:mass_deg_together}
\end{figure}

Another interesting feature of Figure~\ref{fig:mass_deg_together} {where we put the vertical and horizontal degeneracy curves together} is that the mass parameters associated with the two different types of degeneracy agree precisely at the special configuration $E$. More specifically, although the functions $m_3$ and $m_4$ exhibit quite different behavior along the horizontal and vertical degeneracy curves away from $E$, the two values of $m_3$ coincide at $E$, and the two values of $m_4$ also coincide there. Hence the horizontal and vertical degeneracy conditions not only occur at the same geometric configuration $(a,b)=E$, but also determine the same corresponding mass distribution. This provides further evidence that the coincidence of the two degeneracy curves at $E$ is intrinsic to the special symmetry of this configuration rather than an accidental intersection in the $(a,b)$-parameter space. In particular, at
$E=\left(\frac{\sqrt{3}}{3},\sqrt{3}\right),$ $m_3(E)\approx 0.7704869546,m_4(E)=1,$
and the bodies $q_1$, $q_2$, and $q_4$ form an equilateral triangle, while $q_3$ lies at its center. The simultaneous agreement of the mass parameters, together with the tangency of the horizontal and vertical degeneracy curves at $E$, highlights the exceptional role played by this highly symmetric configuration in the full-space degeneracy problem.

\begin{figure}[ht]
    \centering
    \includegraphics[width=0.6\textwidth,height=3in]{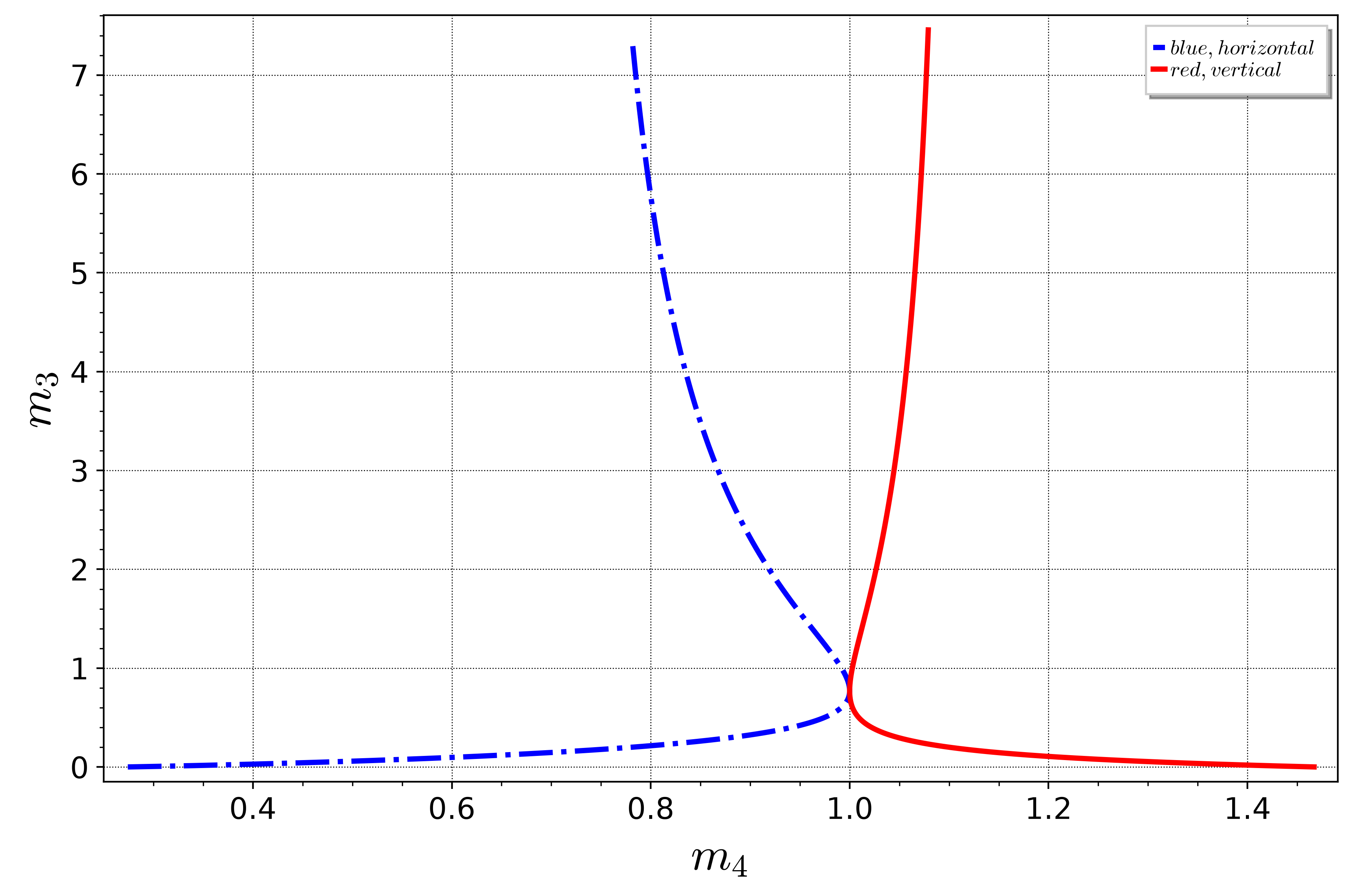}
   
\caption{ The mass relation $(m_4,m_3)$ along the horizontal and vertical degeneracy curves. The blue dash-dotted curve is the image of $\Gamma_h$ and the red solid curve is the image of $\Gamma_v$ in the $(m_4,m_3)$-plane. The two mass curves meet at the
configuration $E=(\sqrt{3}/3,\sqrt{3})$, corresponding to $m_4=1$ and
$m_3\approx0.7704869546$. Away from this common point, the horizontal and vertical
degeneracy branches exhibit distinctly different mass distributions.
}
\label{fig:m3_vs_m4_deg}
\end{figure}

Figure~\ref{fig:m3_vs_m4_deg} shows the mass relation $(m_4,m_3)$ along
the horizontal and vertical degeneracy curves $\Gamma_h$ and $\Gamma_v$.
Although the two degeneracy branches have quite different behavior in the
mass parameter space, their images meet at the mass pair corresponding to
the special configuration
\[
E=\left(\frac{\sqrt3}{3},\sqrt3\right),
\qquad
(m_4,m_3)\approx(1,0.7704869546).
\]
As one moves away from $E$, the two branches separate rapidly and exhibit
distinct mass distributions. Along the horizontal branch $\Gamma_h$ (blue),
$m_3$ becomes large as $m_4$ decreases, whereas along the vertical branch
$\Gamma_v$ (red), $m_3$ becomes large as $m_4$ increases. Thus, the figure
provides a complementary view of the horizontal and vertical degeneracies
in the $(m_4,m_3)$-parameter space and further highlights the distinguished
role of $E$, where the two types of degeneracy coincide.
\end{remark}

\section{Appendix: Proof of Lemma \ref{lem:cancellation-normalization} }
Here we give the proof of Lemma \ref{lem:cancellation-normalization} which is used to prove Theorem \ref{Prop:J2Entries}.

\begin{proof}
{We start with a reminding that all vectors in the proof should be understood as column vectors.} Set
\[
C=B_2B_1^{-1}
=
\begin{pmatrix}
\frac12&\frac a2&\frac12&-\frac a2\\
-\frac a2&\frac12&\frac a2&\frac12\\
\frac12&\frac b2&\frac12&-\frac b2\\
-\frac b2&\frac12&\frac b2&\frac12
\end{pmatrix}.
\]
Since \(m_1=m_2=1\) and the configuration is symmetric with respect
to the \(y\)-axis, its center of mass is
\[
c=(0,c_y),
\qquad
c_y=\frac{m_3a+m_4b}{2+m_3+m_4}.
\]

Introduce the stacked vectors
\[
s_1=
\begin{pmatrix}
q_1-c\\
q_2-c
\end{pmatrix}
=
\begin{pmatrix}
-1\\
-c_y\\
1\\
-c_y
\end{pmatrix},
\qquad
s_2=
\begin{pmatrix}
q_3-c\\
q_4-c
\end{pmatrix}
=
\begin{pmatrix}
0\\
a-c_y\\
0\\
b-c_y
\end{pmatrix}.
\]
A direct multiplication gives
\begin{equation}\label{eq:Cs1s2}
Cs_1=s_2.
\end{equation}

Let
\[
R=
\begin{pmatrix}
1&0\\
0&1\\
1&0\\
0&1
\end{pmatrix}.
\]
Thus, for any \(v\in\mathbb{R}^2\), the vector \(Rv\) consists of
two copies of \(v\). Another direct multiplication gives
\begin{equation}\label{eq:CRR}
CR=R.
\end{equation}
This identity reflects the fact that the translation directions are
among the four columns used to construct \(P\).

Write
\[
q^{(1)}=(x_1,y_1,x_2,y_2),
\qquad
q^{(2)}=(x_3,y_3,x_4,y_4),
\]
and decompose
\[
Dc=
\begin{pmatrix}
D_1c&D_2c
\end{pmatrix},
\qquad
D\lambda=
\begin{pmatrix}
D_1\lambda&D_2\lambda
\end{pmatrix},
\]
where
\[
D_kc=\frac{\partial c}{\partial q^{(k)}},
\qquad
D_k\lambda=\frac{\partial\lambda}{\partial q^{(k)}},
\qquad k=1,2.
\]

For each \(i\),
\[
D\bigl(\lambda(q)(q_i-c(q))\bigr)
=
\lambda\,D(q_i-c)
+(q_i-c)\,D\lambda.
\]
Consequently, the blocks of \(D\mathcal{N}\) involving derivatives
with respect to \(q^{(2)}\) are
\begin{align}
N_{12}
&=
-\lambda R D_2c+s_1D_2\lambda,
\label{eq:N12normalization}\\
N_{22}
&=
\lambda\bigl(I_4-RD_2c\bigr)
+s_2D_2\lambda.
\label{eq:N22normalization}
\end{align}
Here \(s_kD_2\lambda\) denotes the $4\times4$ matrix of the product of the column vector
\(s_k\) and the row vector \(D_2\lambda\).

Using \eqref{eq:N12normalization} and
\eqref{eq:N22normalization}, we obtain
\begin{align*}
N_{22}-CN_{12}
&=
\lambda\bigl(I_4-RD_2c\bigr)+s_2D_2\lambda
-C\bigl(-\lambda RD_2c+s_1D_2\lambda\bigr)\\
&=
\lambda I_4
-\lambda RD_2c
+\lambda CRD_2c
+\bigl(s_2-Cs_1\bigr)D_2\lambda.
\end{align*}
By \eqref{eq:Cs1s2} and \eqref{eq:CRR},
\[
s_2-Cs_1=0,
\qquad
CR-R=0.
\]
It follows that
\[
N_{22}-CN_{12}
=
\lambda I_4.
\]
\end{proof}



{\bf Acknowledgements:}
Shanzhong Sun is partially supported by NSFC (Nos. 12171327, 12261131498). Zhifu Xie is partially supported by the Wright W. and Annie Rea Cross Endowment Funds at the University of Southern Mississippi and by the National Science Foundation (Award Number (FAIN): 2447675).


\end{document}